\def\R{\mathbb{R}}
\newcommand{\zz}{^{\top}}
\newcommand{\lb}{\lambda}
\newcommand{\diag}{\operatorname*{diag}}
\newcommand{\st}{\textnormal{s.t.}}
\newcommand{\dist}{\textnormal{dist}}
\begin{document}
	
	\title{An Inexact Augmented Lagrangian Algorithm for Training
		Leaky ReLU Neural Network with Group Sparsity}
	
	\author{\name Wei Liu\email liuwei175@lsec.cc.ac.cn
		\AND
		\name Xin Liu \email liuxin@lsec.cc.ac.cn \\
		\addr Institute of Computational Mathematics and Scientific/Engineering Computing \\
		Academy of Mathematics and Systems Science\\ Chinese Academy of Sciences \\
		Beijing 100190, China
		\AND
		\name Xiaojun Chen \email xiaojun.chen@polyu.edu.hk \\
		\addr Department of Applied Mathematics\\
		The Hong Kong Polytechnic University\\
		Hung Hom, Kowloon, Hong Kong}
	
	\editor{}
	\maketitle
	
	\begin{abstract}
				The leaky ReLU network with a group sparse regularization term has been widely used in the recent years.
		However, training such network yields a nonsmooth nonconvex optimization problem and there exists a lack of approaches to compute a stationary point deterministically.
			In this paper, we first resolve the multi-layer composite term in the original optimization problem by introducing auxiliary variables and additional constraints. We show the new model has a nonempty and bounded solution set and its feasible set satisfies the Mangasarian-Fromovitz constraint qualification.  Moreover, we show the relationship between the new model and the original problem.
		{Remarkably}, we propose an inexact augmented Lagrangian algorithm for solving the {new model}, and show the convergence of the algorithm to a KKT point.  Numerical experiments demonstrate that
		our algorithm is more efficient for training sparse leaky ReLU neural networks than some well-known algorithms.
	\end{abstract}
	
	\begin{keywords}
		sparse neural network, leaky ReLU, group sparsity, penalty method, inexact augmented Lagrangian method
	\end{keywords}
	
	\section{Introduction}
	
	In this paper, we focus on the parameter estimation problem of
	the leaky ReLU network~\citep{maas2013rectifier} with the $l_{2,1}$ regularizer, {which pursues the group sparsity}. The problem can be formulated as
	\begin{equation}\label{eq:odnn}
		\min_{w,b}\frac{1}{N}\sum_{n=1}^N\|\sigma(W_{L}\sigma(\cdots\sigma(W_1x_n+b_1)+\cdots)+b_{L})-y_n\|^2+\mathcal{R}_1(w).
	\end{equation}
	Here $\{x_n\in\R^{N_0}\}_{n=1}^N$ and $\{y_n\in\R^{N_L}\}_{n=1}^N$ are the given input and output data, respectively,
	$\sigma$ {stands for} the {component-wise} activation function, variables $W_{\ell} \in \R^{N_{\ell}\times N_{\ell-1}}$ and
		$b_{\ell}\in \R^{N_{\ell}}$ represent
	the weight matrices and the bias vectors for all $\ell\in[L]$,  respectively, $\mathcal{R}_1:\mathbb{R}^{\widetilde{N}} \rightarrow \mathbb{R}$ is the sparse regularizer of $w$. {For conveninece, we denote}
	\begin{align*}
		&
		\sigma(z)=\max\{z,  \alpha z\}, \,\,\mathcal{R}_1(w):=\lb_w\sum_{\ell=1}^L\|W_{\ell}\|_{2,1}=\lb_w\sum_{\ell=1}^L\sum_{j=1}^{N_{\ell-1}}\|(W_{\ell})_{\cdot,j}\|,\\
		&w=\left(\mathrm{vec}(W_1)\zz,\ldots,\mathrm{vec}(W_L)\zz\right)\zz\in\R^{\widetilde{N}},\,\,b=\left(b_1\zz,\ldots,b_L\zz\right)\zz\in\R^{\overline{N}},
	\end{align*}	
	{where $\|\cdot\|$ refers to} the $l_2$ norm,  
	$\lb_w> 0$, $(W_{\ell})_{\cdot,j}$ {stands for} the $j$-th column of $W_{\ell}$, $\mathrm{vec}(W_{\ell}) \in \R^{N_{\ell-1} N_{\ell}}$
	{is} the {column-wise} vectorization of $W_{\ell}$, $\widetilde{N}:=\sum_{\ell=1}^LN_{\ell}N_{\ell-1}$ and $\overline{N}:=\sum_{\ell=1}^LN_{\ell}$, $\max\{z, \alpha z\}= (\max\{z_1,\alpha z_1\},...,\max\{z_K,\alpha z_K\})\zz$ for any $z\in\R^K$ and $0<\alpha<1$ and $[L]$ denotes $\{1,2,\ldots,L\}$.
	
	It is worth noting that the {activation functions} ReLU and the leaky ReLU {get more and more popular in recent applications}, as they can	 {alleviate the overfitting phenomenon and pursue the model (neuron) sparsity, e.g., almost half of the neurons in the ReLU network are zero 
		\citep{jarrett2009best,nair2010rectified,glorot2011deep,maas2013rectifier,dahl2013improving,he2015para,agarap2018deep}.
		Moreover, the performance of the leaky ReLU network is {reported to be} slightly better than that of the ReLU network \citep{maas2013rectifier,pedamonti2018comparison}. As we will show in Theorem \ref{thm:nonem}, the leaky ReLU network with a regularization term has a nonempty and bounded solution set, while the ReLU network with a regularization term does not have the property (see a counterexample given by \cite{liu2021auto}).} For simplicity, we focus on the leaky ReLU network in this paper.
	Our new model, algorithm and theoretical analysis can be generalized to the ReLU network easily (see Remark \ref{rem:leReLU}).
	
	In training a deep neural network (DNN, e.g., the leaky ReLU network),
	regularization techniques play an important role in reducing the generalization error (also called the test error) \citep{goodfellow2016deep}. {  The $l_2$ regularizer  (i.e., $\|\cdot\|^2$, also called the weight decay) has been widely used for training the DNN \citep{goodfellow2016deep}.} Recently, sparse regularizers, such as the lasso regularizer \citep{goodfellow2016deep} and the group sparse regularizer \citep{zhou2010exclusive,wen2016sparse, Feng2017,yoon2017combined, Scardapane2017group},
	{are superior to the  $l_2$ regularizer in pursuing}
	the parameter sparsity and lead to theoretical improvement in efficiency \citep{hoefler2021sparsity}. Moreover, \cite{wen2016sparse} show that by using the gradient descent methods, less training time is required by DNN with a group sparse regularizer compared with that required by DNN with a lasso regularizer.
	{The} group sparse regularizer {also appears in} convolution neural network \citep{bui2021structured} {and other} machine learning {problems} \citep{meier2008group, jenatton2011structured,simon2013sparse}, etc.  
	Hence, {we focus on training the leaky ReLU network with the $l_{2,1}$ regularizer for pursuing the group sparsity.}
	
	The stochastic gradient descent based methods{, including the stochastic gradient descent methods (SGD),} are widely used {in training DNN including} the leaky ReLU network with group sparsity, while they neglect {the fact that the subdifferentials of the objective function at those nondifferentiable points are not available  \citep{abadi2016tensorflow, Paszke2019pytorch}.
		Instead, they calculate the ``gradient'' via the ``chain rule''
		brutely no matter the ``chain rule" applies or not \citep{mjt_dlt,bolte2021conser}). Therefore,
		gradient descent based approaches can not deterministically yield Clarke stationary points (see Definition \ref{def:sta}) and may encounter numerical troubles in extreme cases.}
	Recently, \citet{davis2020ssgd} prove that the stochastic subgradient (SSGD) method for training the nonsmooth network
	{can obtain} Clarke stationary {points} for the leaky ReLU network with probability $1$. 
	However, they {have not explained how to calculate a subgradient practically.}   Moreover, even though a Clarke stationary point is obtained, it may be far away from any local minimizer (see Example \ref{exam:exam0}).
	
	
	{Recently, approaches \citep{carreira2014distributed,taylor2016training,lau2018proximal,zeng2019global,cui2020multicomposite,Evens2021,liu2021auto} based on a new methodology that introduces auxiliary variables and constraints to resolve the multi-layer nonsmoothness, have been proposed, which have chance to find stationary points deterministically.}
	Specifically, \cite{cui2020multicomposite} propose {an} $l_1$ penalty method, {which yields a directional stationary point theoretically, for training} the DNN with
	{piecewise activation functions and} an $l_2$ regularizer. 
	\cite{liu2021auto} propose a smoothing method that finds a Clarke stationary point for the two-layer ReLU network. To the best of our knowledge, algorithms with
	{guaranteed global convergence to} KKT points for a nonsmooth deep neural network with group sparsity have not been {developed yet}.
	\subsection{{Motivation}}\label{sec:motiva}
	
	
	
	{In this paper, we aim to explore efficient approaches for
		solving problem \eqref{eq:odnn} with guaranteed convergence. Hence, we pay our attention to the methods which introduce auxiliary varaibles and constraints to resolve the
		multi-layer nonsmoothness. To peel the complicated composite objective of \eqref{eq:odnn} like bamboo shoot, we first introduce the following varaiables,}
	\begin{equation}
		\label{eq:vdefine}
		v:=\left(v_{1,1}\zz,v_{2,1}\zz,\ldots,v_{1,L}\zz,v_{2,L}\zz,\ldots,v_{N,L}\zz\right)\zz\in\R^m,
	\end{equation}
	where $m:=N\overline{N}$, 
	$v_{n, \ell}:=\sigma(W_{\ell}\sigma(\cdots\sigma(W_{1} x_{n}+b)_{+}+\cdots)+b_{\ell})$, $v_{n,0}:=x_n$ for all $\ell\in[L]$ and $n\in[N]$.
	Specifically, \cite{liu2021auto} introduce a linearly constrained model for training a  two-layer ReLU network with a regularization term.
	{ For solving the sparse leaky ReLU network with more than two layers, 
		we introduce} in this paper 
	a regularization term $\mathcal{R}_2(v): \mathbb{R}^{m}\mapsto \mathbb{R}$ by $$\mathcal{R}_2(v):=\lb_v\|v\|^2,$$  and a new group of variables
	\begin{equation}
		\label{eq:udefine}
		u=\left(u_{1,1}\zz,u_{2,1}\zz,\ldots,u_{1,L}\zz,u_{2,L}\zz,\ldots,u_{N,L}\zz\right)\zz\in\R^m,
	\end{equation}
	where $\lb_v>0$, $u_{n,\ell}=W_{\ell}v_{n,\ell-1}+b_{\ell}$ for all $n\in[N]$ and $\ell\in[L]$.  Then, we derive the following model
	\begin{equation}\label{eq:dnn}
		\tag{P}
		\begin{aligned}
			\min_{w,b,v,u}\, & \, \bar{\mathcal{O}}(w,v):=\frac{1}{N}\sum_{n=1}^N\|v_{n,L}-y_n\|^2+\mathcal{R}_1(w)+\mathcal{R}_2(v)\\
			\st \,\, & \, \sigma(u_{n,\ell})-v_{n,\ell}=0,  \,u_{n,\ell}- (W_{L}v_{n,L-1}+b_{L})=0,  \\
			&\, n\in[N], \, \ell\in[L].
		\end{aligned}
	\end{equation}
	{The regularization teams $\mathcal{R}_1$ and $\mathcal{R}_2$ lead to the level boundedness of} the objective function $\bar{\mathcal{O}}$. Moreover, $\mathcal{R}_1$ imposes
	{column-wise} sparsity of the weight matrices $W_\ell$ for all $\ell\in [L]$. By defining {the} linear operator $\Psi(v):\R^{m}\mapsto\R^{m\times \widetilde{N}}$ and {the} matrix $A\in\R^{m\times \overline{N}}$ as
	\begin{align*}
		\Psi(v)&=	
		\left[
		\begin{array}{cccc}
			X\zz\otimes I_{N_1} & \ldots & \ldots& 0\\
			0 & V_1\zz\otimes I_{N_2} & \ldots& 0\\
			0 & \ldots&\ldots& 0 \\
			0 & \ldots & \ldots& V_{L-1}\zz\otimes I_{N_L}
		\end{array}
		\right]\,\mbox{{and}}\\ A&=
		\left[
		\begin{array}{cccc}
			e_N \otimes I_{N_1} & \ldots & \ldots& 0\\
			0 & e_N \otimes I_{N_2}  & \ldots& 0\\
			0 & \ldots&\ldots& 0 \\
			0 & \ldots & \ldots& e_N \otimes I_{N_L}
		\end{array}
		\right],
	\end{align*}
	{respectively,} where $X:=(x_1,x_2,\ldots,x_n)$, $V_{\ell}:=(v_{1,\ell},v_{2,\ell},\ldots,v_{N,\ell})\in\R^{N_{\ell}\times N}$ for all $\ell\in[L]$, $\otimes$ represents the Kronecker product and $e_N\in\R^N$ denotes the {all one} vector, the constraint set of problem \eqref{eq:dnn} can be simply written as
	
	$$v- \sigma(u)= 0, u= \Psi(v)w+Ab.$$
	
	Due to the nonsmoothness of $\sigma(u)$,
	problem (P) does not satisfy a standard constraint qualification for mathematical programming. We consider to have $v\ge \sigma(u)$ as a constraint and add a penalty term $\beta\zz (v-\sigma(u))$ in the objective function, where $$\beta = (\beta_1 e_{NN_{1}}\zz, \ldots, \beta_L e_{NN_{L}}\zz)\zz\in\R^m$$ with constants $\beta_{\ell}>0$ for all $\ell\in[L]$.
	Note that {the inequality} $v\geq  \sigma(u)$ is equivalent to {the inequalities} $v-u \geq 0$ and $ v-\alpha u\geq 0$, we then {present} the partial $l_1$ penalty model for problem \eqref{eq:dnn} as follows
	\begin{equation}\label{eq:dnn33}\tag{PP}
		\begin{aligned}
			\min_{w,b,v,u}\, & \, \mathcal{O}(w,v,u)=\bar{\mathcal{O}}(w,v)+\beta\zz (v-  \sigma(u))\\
			\st \,\,\, & \, v-u \geq 0, v-\alpha u\geq 0, 
			u= \Psi(v)w+Ab.
		\end{aligned}
	\end{equation}
	For {brevity}, we denote the feasible sets of problems \eqref{eq:dnn} and \eqref{eq:dnn33} by $\Omega_1$ and $\Omega_2$, respectively.

	It is worth noting that problem \eqref{eq:dnn33} is a nonsmooth nonconvex mathematical programming, where {the second term} $\beta \zz (v-  \sigma(u))$ is nonsmooth,
	$\mathcal{R}_1$ is a convex nonsmooth regularizer, the inequality constraints 
	are linear, the equality constraints 
	are nonconvex bilinear. Hence, both the objective function and the feasible region of problem \eqref{eq:dnn33} {are} much more {complicated} than the optimization problem for two-layer network proposed by \cite{liu2021auto}, which is a linearly constrained programming. Hence, {the approaches therein can not be straightforwardly extended to solve}
	problem \eqref{eq:dnn33}.
	
	
	\subsection{Contributions}
	We consider a regularized minimization model (P) {with auxiliary varaibles and nonsmooth constraints for training} the leaky ReLU network with group sparsity.
	We {investigate its partial $l_1$ penalty model (PP) and establish the relationships between these two models with respect to} global minimizers, local minimizers, and stationary points under some mild conditions. Moreover, we show that the solution set of problem (PP) is bounded and any feasible point of problem (PP) satisfies the Mangasarian-Fromovitz constraint qualification. Based on these results, we {theoretically verify the equivalence between the KKT points and the limiting stationary points of (PP), and further prove that any KKT point of (PP)} is an MPCC W-stationary point of problem (P).
	
	{By exploiting the structure of problem (PP), we}
	propose an inexact augmented Lagrangian method, whose subproblem at each iteration is solved by an alternating minimization method (IALAM). Different from the existing inexact augmented Lagrangian methods for nonsmooth nonconvex optimization problems \citep{lu2012augmented,chen2017augmented}, we design a new {rule for updating} the Lagrangian penalty parameter. 
	{We} also prove that {any iterate sequence generated by IALAM has accumulation points, any of which is} a {limiting stationary} point (or equivalently KKT point) of problem (PP) without assuming the the existence of accumulation points. Moreover, any {limiting stationary} point of problem (PP) is a Clarke stationary point of problem (PP).
	
	The numerical experiments demonstrate that {IALAM}, equipped with {prefixed} algorithm parameters, outperforms the popular SGD-based methods (e.g., Adam, Adadelda, and vanilla SGD) and ProxSGD {
		in solving problems arisen from}
	both synthetic and MNIST {data sets}. More specifically, compared with SGD-based methods, {IALAM} achieves lower training error and test error, and obtains sparser solutions. 
	
	By {applying} IALAM  {to training} both the ReLU and the leaky ReLU {networks under} the same settings, we find that the leaky ReLU network with a small positive $\alpha$ (e.g., $\alpha=0.01$) often leads to slightly better performance than that of the ReLU network, {which verifies the observations of}
	\citet{maas2013rectifier,pedamonti2018comparison}. 

	\subsection{Organizations}
	
	The rest of this paper is organized as follows. In Section \ref{sec:preno}, we introduce some notations, preliminary definitions, lemmas, and results. The relationships between the models \eqref{eq:dnn} and \eqref{eq:dnn33} are illustrated in Section \ref{sec:modelana}. In Section \ref{sec:alm}, we propose an augmented Lagrangian method with the alternating minimization for solving problem \eqref{eq:dnn33} and {establish} the global convergence of the algorithm. In Section \ref{sec:numeri}, we illustrate the performance of our proposed algorithm through extensive numerical experiments. Concluding remarks are {drawn} in the last section.
	

	\section{Notations and Preliminaries}\label{sec:preno}
	
	In this section, we introduce some notations, preliminary definitions, examples, and lemmas.
	
	The $m\times m$ identity matrix is denoted by $I_{m}$. We use $\mathbb{N}_+$ to represent the set of positive integers.
	Given a point $z \in \mathbb{R}^{m}$ and $\epsilon>0, \mathcal{B}_{\epsilon}(z)$ denotes a closed ball centered at $z$ with radius $\epsilon$,  $(\mathrm{sign}(z))_i$ denotes the sign function of $z_i$, and $\diag(z)$ denotes the diagonal matrix whose diagonal vector is $z$.  $\dist(z^*,\Omega)=\min_{z\in\Omega}\|z-z^*\|$ represents the distance from a point $z^*$ to a nonempty closed set $\Omega$. We use $\mathrm{int}(\Omega)$, $\mathrm{co}(\Omega)$ to represent the interior and convex hull of $\Omega$, respectively. The indicator function of a set $\Omega$ is denoted by $\delta_{\Omega}$. We let $\nabla_{(z_1,z_2)}f(z)=\nabla_{z_1}f(z) \times \nabla_{z_2}f(z)$ for a smooth function $f$ with respect to $z=(z_1, z_2)$.
	
	Let $H$ be a symmetric positive definite matrix, $\Omega \subseteq \mathbb{R}^{m}$ be a convex set, and $\operatorname{Proj}^{H}_{\Omega}\left(z^{*}\right)=\arg \min \left\{\left\|z-z^{*}\right\|_{H}: z \in \Omega\right\}$ be the orthogonal projection of a vector $z^{*} \in \mathbb{R}^{m}$ onto $\Omega$  \citep{facchinei2003finite}. If  $H$ is the identity matrix, we will use $\operatorname{Proj}_{\Omega}\left(z^{*}\right)$ instead.
	The proximal mapping $\operatorname{Prox}_{f}(\cdot)$ of a proper closed convex function $f$ is defined as
	$
	\operatorname{Prox}_{f}(z^*)=\arg\min_{z \in \R^{n}}\left\{\frac{1}{2}\|z-z^*\|^{2}+f(z)\right\}.
	$
	\subsection{Subdifferentials and Stationarity}\label{sec:1-2}
	
	Let $f: \Omega \rightarrow \mathbb{R}$ be a locally Lipschitz continuous and directionally differentiable function 
	defined on an open set $\Omega \subseteq \mathbb{R}^{n}.$ The directional derivative of $f$ at $z$ along the direction $d$ is defined as
	$$
	f^{\prime}(z ; d)=\lim _{t \downarrow 0} \frac{f(z+t d)-f(z)}{t}.
	$$
	It is worth mentioning that any piecewise smooth and Lipschitz continuous function is directionally differentiable \citep{mifflin}.
	
	Let $
	\bar{z} \in \Omega \text { be given}
	$.  
	The Clarke subdifferential \citep{clarke1990optimization} of $f$ at $\bar{z}$ is defined by
	$$\partial^c f \left(\bar{z}\right)=\operatorname{co}\left\{\lim _{z \rightarrow \bar{z}} \nabla f(z): f \text{ is smooth at } z\right\}.$$  According to \cite[Definition 8.3]{roc1998var}, the limiting subdifferential of $f$ at $\bar{z}$ is defined by ${\partial} f(\bar{z}) :=$
	$$
	\begin{aligned}
		\left\{v : \exists z^{k} \stackrel{f}{\rightarrow} \bar{z}, v^{k} \rightarrow v \text { such that } \liminf _{z \rightarrow z^k} \frac{f(z)-f\left(z^k\right)-\left\langle v^k, z-z^{k}\right\rangle}{\left\|z-z^k\right\|} \geq 0,\,\, \forall  k\right\}, \\
	\end{aligned}
	$$
	where 
	$z^{k} \stackrel{f}{\rightarrow} \bar{z}$ means that $z^{k} \rightarrow \bar{z}$ and $f\left(z^{k}\right) \rightarrow f(\bar{z})$. 
	If $f$ is convex, then $\partial f$ coincides with $\partial^c f$. If $f$ is furthermore smooth, it holds that $\partial f(z)=\partial^c f(z)=\{\nabla f(z)\}$.  In general, one has
	$
	\operatorname{co}(\partial f(\bar{z}))=\partial^{c} f(\bar{z}).
	$
	
	For $z\in\R^n$, we have
	$$\partial\|z\|=\partial^c\|z\|= \begin{cases}\frac{z}{\|z\|}, & \text{if }\|z\| \neq 0, \\ \left\{r : r \in \mathbb{R}^{n},\|r\| \leq 1\right\}, &\text{if } \|z\|=0.\end{cases}$$
	
	Let $\mathcal{T}_{\Omega}(\bar{z})=\left\{d: d=\lim _{z \in \Omega, z \rightarrow \bar{z}, t \downarrow 0} \frac{z-\bar{z}}{t}\right\}$ be the tangent cone of a set $\Omega$ at $\bar{z}\in \Omega$ and $\mathcal{N}_{\Omega}(z)$ be the limiting normal cone at $z \in \Omega$. If $\Omega$ is a convex set, then $\mathcal{N}_{\Omega}(z)$ coincides with the classical (Clarke) normal cone in the convex analysis, where the Clarke normal cone $\mathcal{N}^c_{\Omega}(z)$ is defined by $\mathcal{N}^c_{\Omega}(z)=\mathrm{cl}\mathrm{co}\mathcal{N}_{\Omega}(z)$.
	
	\begin{definition}\label{def:sta}
		Let $\mathcal{Z}$ be a closed set in $\Omega$.  We call $\bar{z}\in\mathcal{Z}$ a d(irectional)-stationary point of $\min_{z\in\mathcal{Z}}f(z)$ if
		$
		f^{\prime}(\bar{z};d)\geq 0\,\,\text{ for all }d\in\mathcal{T}_{\mathcal{Z}}(\bar{z}).
		$
		We say that
		a point $\bar{z} \in \mathcal{Z}$ is a limiting stationary point, a C(larke)-stationary point of $\min_{z\in\mathcal{Z}}f(z)$ if $0 \in \partial f(\bar{z})+\mathcal{N}_{\mathcal{Z}}(\bar{z})$, $0 \in \partial^cf(\bar{z})+\mathcal{N}_{\mathcal{Z}}^c(\bar{z})$, respectively.
	\end{definition}
	Based on Definition \ref{def:sta}, we have the following relationships 
	\begin{equation}
		\label{eq:relasta}
		\text{local minimizer } \Rightarrow \text{ d-stationary }\Rightarrow \text{ limiting stationary }  \Rightarrow \text{ C-stationary}.
	\end{equation}
	
	Furthermore, $0 \in \partial^cf(\bar{z})+\mathcal{N}_{\mathcal{Z}}^c(\bar{z})$ implies
	$$
	f^{\circ}(\bar{z};d):=\limsup _{z \rightarrow \bar{z}, t \downarrow 0} \frac{f(z+t d)-f(z)}{t}  \geq 0,\,\,\text{for all }d\in\mathcal{T}_{\mathcal{Z}}(\bar{z}).
	$$
	
	If a certain constraint qualification condition (see Subsection \ref{sec:pre}) holds at $\bar{z}\in\mathcal{Z}$, then $\bar{z}$ being {a} limiting stationary point is a necessary condition for $\bar{z}$ to be a local minimizer of $f$ (see an example given by \citet{chen2017augmented}).
	
	
	In general, a C-stationary point {  is not a good candidate} for a local minimizer. 
	We end this subsection with an example on the DNN to illustrate that a C-stationary point may not be a
	limiting stationary point, {  which further may not be a local minimizer.}
	\begin{example}\label{exam:exam0}
		Consider
		\begin{equation}\label{eq:exam0}
			\min_{w_1\in\R,w_2\in\R,b_1\in\R,b_2\in\R}\left(\left(w_{2}\sigma\left(w_{1}+b_{1}\right)+b_{2}\right)+1\right)^{2}+\left(\left(w_{2}\sigma\left(2w_{1}+b_{1}\right)+b_{2}\right)-1\right)^{2}.	\end{equation}
		Let $f(w_1,w_2, b_1,b_2)$ be the objective function of \eqref{eq:exam0}, $w_{2}^{*}=1,\, b_{1}^{*}=0,\, w_{1}^{*}=0,\, b_{2}^{*}=0$, we have
		\begin{equation*}
			\begin{aligned}
				&\partial^cf(w^*_1,w^*_2, b^*_1,b^*_2)
				=\left\{(t,0,s,0)^T: t\in [2\alpha-4, 2-4\alpha],
				s\in [-2+2\alpha, 2-2\alpha]\right\},\\
				&\partial\left(f(w^*_1,w^*_2, b^*_1,b^*_2)\right)
				\\&=\left\{(-2\alpha,0,0,0)\zz,(2\alpha-4,0,2\alpha-2,0)\zz,(2-4\alpha,0,2-2\alpha,0)\zz,(-2,0,0,0)\zz\right\},\\
				&f(w^*_1+\epsilon,w^*_2, b^*_1,b^*_2)=5\epsilon^2-2\epsilon+2<2=f(w^*_1,w^*_2, b^*_1,b^*_2) \text{ for some small positive number }\epsilon.
			\end{aligned}
		\end{equation*}
		For some $0<\alpha<\frac{1}{2}$, $(w^*_1,w^*_2, b^*_1,b^*_2)$ is a C-stationary point of \eqref{eq:exam0}, but it is neither a local minimizer nor a limiting stationary point of \eqref{eq:exam0}. Moreover, one can see that $(1,2,-1,-1)$ is a global minimizer of \eqref{eq:exam0}, at which the function value is 0.
	\end{example}
	
	\subsection{Necessary Optimality Conditions.}\label{sec:pre}
	In this subsection, we provide first order necessary optimality conditions for local minimizers of problems \eqref{eq:dnn} and \eqref{eq:dnn33}, respectively. Let
	\begin{equation}\label{eq:omega3}
		\mathcal{C}(v,u):=\left(\begin{matrix}
			u-v \\ \alpha u-v 
		\end{matrix}\right).
	\end{equation}
	\begin{definition}
		We say that $(w^*,b^*,v^*,u^*)$ is a  KKT point of problem \eqref{eq:dnn33} if there exist vectors $\mu\in\R_+^{2m}$ and $\xi\in\R^{m}$ such that
		\begin{align}
			&0= \nabla_w\bar{\mathcal{O}}(w^*,v^*)+ \Psi(v^*)\zz \xi,  \quad 0= A\zz\xi,\label{eq:kktdnn2-1}\\
			&0= \nabla_v \bar{\mathcal{O}}(w^*,v^*)+\beta +\nabla_v \mu\zz \mathcal{C}( v^*,u^*)-\nabla_v \xi\zz  \Psi(v^*)w^*,\label{eq:kktdnn2-2}\\
			&0\in \partial_u (-\beta \zz \sigma(u^*)) +\nabla_u \mu\zz \mathcal{C}( v^*,u^*)+ \xi,\label{eq:kktdnn2-3}\\
			&\mathcal{C}( v^*,u^*)\leq 0,\,\,\mu\zz\mathcal{C}( v^*,u^*)=0, \,\,u^*- \Psi(v^*)w^*-Ab^*=0.\label{eq:kktdnn2-4}
		\end{align}
	\end{definition}
	
	Since $v-\sigma (u)=0$ can be written as the following complementarity problem
	$$
	v-u\geq 0, (v-u)(v-\alpha u)=0, v-\alpha u\geq 0,
	$$
	we can define the Mathematical Programming with Complementarity Constraints (MPCC) W(eakly)-stationary point \citep{Scheel2000mpcc,guo2021mpcc} of problem \eqref{eq:dnn} as follows.
	\begin{definition}
		We say that $(w^*,b^*,v^*,u^*)\in\Omega_1$ is an MPCC W-stationary point of problem \eqref{eq:dnn}
		if there exist vectors $\mu^1\in\R^{m}$, $\mu^2\in\R^{m}$ and $\xi\in\R^{m}$ such that
		\begin{align}
			&0= \nabla_w\bar{\mathcal{O}}(w^*,v^*)+ \Psi(v^*)\zz \xi,  \quad 0= A\zz\xi,\label{eq:mpcckktdnn-1}\\
			&0= \nabla_v \bar{\mathcal{O}}(w^*,v^*)-\mu^1-\mu^2 -\nabla_v \xi\zz  \Psi(v^*)w^*,\label{eq:mpcckktdnn-2}\\
			&0= \mu^1+\alpha \mu^2+ \xi,\label{eq:mpcckktdnn-3}\\
			&\left(\mu^1\right)\zz(v^*- u^*)=0, \,\, \left(\mu^2\right)\zz( v^*- \alpha u^*)=0.\label{eq:mpcckktdnn-5}
		\end{align}
		We say $(w^*,b^*,v^*,u^*)\in\Omega_1$ is an MPCC C(larke)-stationary point of problem \eqref{eq:dnn}, if it is an MPCC W-stationary point of problem \eqref{eq:dnn} and
		$\mu^1_i\mu^2_i\ge 0$  for $u^*_i=v^*_i=0.$
	\end{definition}

	To ensure that
	a local minimizer of problem \eqref{eq:dnn33} is a KKT point, the following Lemma shows that the feasible set of problem \eqref{eq:dnn33} satisfies the Mangasarian–Fromovitz constraint qualification (MFCQ), which is a standard constraint qualification for nonlinear programmings~\citep{Mangasarian1967mfcq}.
	\begin{lemma}\label{lem:mfcqsys}
		The MFCQ holds at $(w^*,b^*,v^*,u^*)\in\Omega_2$ for problem \eqref{eq:dnn33}, i.e.,  there exist no nonzero vectors \(\xi \in \mathbb{R}^{m}, \mu \in \mathbb{R}_{+}^{2m}\)
		such that \(\mu\zz \mathcal{C}( v^*,u^*)=0\) and
		\begin{align}
			&0= \Psi(v^*)\zz \xi,  \quad 0= A\zz\xi,\label{eq:kktdnn3-1}\\
			&0= \nabla_v \mu\zz \mathcal{C}( v^*,u^*)-\nabla_v \xi\zz  \Psi(v^*)w^*,\label{eq:kktdnn3-2}\\
			&0= \nabla_u \mu\zz \mathcal{C}( v^*,u^*)+ \xi.\label{eq:kktdnn3-3}
		\end{align}
	\end{lemma}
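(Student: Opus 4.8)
The plan is to establish the dual form of MFCQ exactly as stated. Suppose $\xi\in\R^m$ and $\mu=(\mu^1;\mu^2)$ with $\mu^1,\mu^2\in\R_+^m$ satisfy \eqref{eq:kktdnn3-1}--\eqref{eq:kktdnn3-3} together with $\mu\zz\mathcal{C}(v^*,u^*)=0$; I will show these force $\xi=0$ and $\mu=0$, which is precisely the nonexistence of a nonzero multiplier claimed in Lemma~\ref{lem:mfcqsys}. Throughout I split each vector in $\R^m$ into blocks indexed by $(n,\ell)$, $n\in[N]$, $\ell\in[L]$, writing $\xi_{n,\ell},\mu^1_{n,\ell},\mu^2_{n,\ell}\in\R^{N_\ell}$, and recall $v^*_{n,0}=x_n$.

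The one computation that has to be done carefully is the block form of the bilinear coupling $\nabla_v(\xi\zz\Psi(v)w^*)$. Using the Kronecker identity $(B\zz\otimes I)\mathrm{vec}(W)=\mathrm{vec}(WB)$, the $(n,\ell)$ block of $\Psi(v)w^*$ equals $W^*_\ell v_{n,\ell-1}$, so that $\xi\zz\Psi(v)w^*=\sum_{n,\ell}\xi_{n,\ell}\zz W^*_\ell v_{n,\ell-1}$. Since $v_{n,\ell}$ enters this sum only through the layer-$(\ell+1)$ term $\xi_{n,\ell+1}\zz W^*_{\ell+1}v_{n,\ell}$, differentiation gives
\begin{equation*}
	\big[\nabla_v(\xi\zz\Psi(v)w^*)\big]_{n,\ell}=
	\begin{cases}
		(W^*_{\ell+1})\zz\xi_{n,\ell+1}, & \ell\in[L-1],\\
		0, & \ell=L.
	\end{cases}
\end{equation*}
Thus the coupling is block-triangular across layers and, crucially, decouples completely at the output layer $\ell=L$. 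Substituting this together with $\nabla_v\mu\zz\mathcal{C}=-\mu^1-\mu^2$ and $\nabla_u\mu\zz\mathcal{C}=\mu^1+\alpha\mu^2$ into \eqref{eq:kktdnn3-2}--\eqref{eq:kktdnn3-3} produces the blockwise relations $\mu^1_{n,\ell}+\mu^2_{n,\ell}+[\nabla_v(\xi\zz\Psi(v)w^*)]_{n,\ell}=0$ and $\xi_{n,\ell}=-\mu^1_{n,\ell}-\alpha\mu^2_{n,\ell}$.

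I would then close the argument by a backward induction on $\ell$ for each fixed $n$. At the output layer the coupling term vanishes, so the first relation reads $\mu^1_{n,L}+\mu^2_{n,L}=0$; as both summands are nonnegative, $\mu^1_{n,L}=\mu^2_{n,L}=0$, whence $\xi_{n,L}=0$ from the second relation. Assuming $\xi_{n,\ell+1}=0$, the coupling term at layer $\ell$ equals $(W^*_{\ell+1})\zz\xi_{n,\ell+1}=0$, so again $\mu^1_{n,\ell}+\mu^2_{n,\ell}=0$, forcing $\mu^1_{n,\ell}=\mu^2_{n,\ell}=0$ and hence $\xi_{n,\ell}=0$. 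Carrying this down to $\ell=1$ and over all $n$ yields $\mu^1=\mu^2=0$ and $\xi=0$, which is the desired conclusion.

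The only genuinely delicate step is the first one, namely pinning down the block-triangular structure of $\nabla_v(\xi\zz\Psi(v)w^*)$ and, in particular, that its output-layer block is zero; once this is in hand, everything is driven purely by the sign constraint $\mu\ge 0$. It is worth noting that the argument uses neither the complementarity relation $\mu\zz\mathcal{C}(v^*,u^*)=0$ nor the $w,b$ equations \eqref{eq:kktdnn3-1} (the latter being automatically consistent once $\xi=0$), and that the value of $\alpha\in(0,1)$ plays no role in the zero-forcing; these ingredients enter the surrounding theory (the boundedness and solution-set results) rather than the constraint qualification itself. An equivalent primal route would instead exhibit an MFCQ direction by exploiting that the equality constraints $u-\Psi(v)w-Ab=0$ have Jacobian $I_m$ in $u$ and hence full row rank, but the dual computation above is the most direct match to the form asserted in the statement.
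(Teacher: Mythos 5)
Your proof is correct and follows essentially the same route as the paper's: both reduce the dual system to the blockwise relations $\mu^1_{n,\ell}+\mu^2_{n,\ell}=-(W^*_{\ell+1})\zz\xi_{n,\ell+1}$ (with the right-hand side vanishing at $\ell=L$) and $\xi_{n,\ell}=-\mu^1_{n,\ell}-\alpha\mu^2_{n,\ell}$, and then run a backward induction from the output layer using only the nonnegativity of $\mu$. Your explicit Kronecker computation of the block-triangular structure of $\nabla_v(\xi\zz\Psi(v)w^*)$ is a slightly more detailed rendering of what the paper phrases as ``considering the coefficient with respect to $v^*_{n,L}$,'' and your observation that neither the complementarity condition nor \eqref{eq:kktdnn3-1} is actually needed is accurate.
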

	\begin{proof}		
		We prove that the linear system \eqref{eq:kktdnn3-1}--\eqref{eq:kktdnn3-3} only has a zero solution.
		
		Let $\mu=((\mu^1_{1,1})\zz,(\mu^1_{2,1})\zz,\ldots,(\mu^1_{1,L})\zz,\ldots,(\mu^1_{N,L})\zz,(\mu^2_{1,1})\zz,\ldots,(\mu^2_{N,L})\zz)\zz$ and $\xi=\\(\xi_{1,1}^{\top}, \xi_{2,1}^{\top}, \ldots, \xi_{N, 1}^{\top}, \xi_{1,2}^{\top}, \ldots, \xi_{N, L}^{\top})\zz$, where $\mu^1_{n,\ell},\mu^2_{n,\ell}\in\R^{N_{\ell}}_+$, $\xi_{n,\ell}\in\R^{N_{\ell}}$ for all $n\in[N]$ and $\ell\in[L]$. Notice that $u^*= \Psi(v^*)w^*+Ab^*$ is equivalent to $u^*_{n,\ell}- (W^*_{\ell}v^*_{n,\ell-1}+b^*_{\ell})=0$ for all $n\in[N]$ and $\ell\in[L]$, the equalities \eqref{eq:kktdnn3-2} and \eqref{eq:kktdnn3-3} yield
		\begin{align}
			&0 = \nabla_v \left(\sum_{n=1}^{N}\sum_{\ell=1}^{L} \left(\mu^1_{n,\ell}\right)\zz v^*_{n,\ell}+  \left(\mu^2_{n,\ell}\right)\zz v^*_{n,\ell}+ \xi_{n,\ell}\zz W^*_{\ell}v^*_{n,\ell-1}\right),\label{eq:derive1}\\
			&0 = \nabla_u \left(\sum_{n=1}^{N}\sum_{\ell=1}^{L} \left(\mu^1_{n,\ell}\right)\zz u^*_{n,\ell}+  \alpha\left(\mu^2_{n,\ell}\right)\zz  u^*_{n,\ell}+ \xi_{n,\ell}\zz u^*_{n,\ell}\right).\label{eq:derive2}
		\end{align}		
		
		We first consider the coefficient with respect to $v^*_{n,L}$ in \eqref{eq:derive1} for all $n\in[N]$, which yields $0=-(\mu^1_{n,L}+\mu^2_{n,L})$. Together with {the inequalities} $\mu^1_{n,L}\geq 0$ and $\mu^2_{n,L}\geq 0$, we obtain that $\mu^1_{n,L}=\mu^2_{n,L}= 0$ for all $n\in[N]$. We then consider the coefficient with respect to $u^*_{n,L}$ in \eqref{eq:derive2}  for all $n\in[N]$, which implies that $0=\mu^1_{n,L}+\alpha\mu^2_{n,L}+ \xi_{n,L}$. Hence, we have $\xi_{n,L}= 0$ for all $n\in[N]$. Substituting $\xi_{n,L}=\mu^1_{n,L}=\mu^2_{n,L}= 0$ for all $n\in[N]$ into \eqref{eq:derive1} and \eqref{eq:derive2}, we obtain that
		\begin{align*}
			&0 = \nabla_v \left(\sum_{n=1}^{N}\sum_{\ell=1}^{L-1} \left(\mu^1_{n,\ell}\right)\zz v^*_{n,\ell}+  \left(\mu^2_{n,\ell}\right)\zz v^*_{n,\ell}+ \xi_{n,\ell}\zz W^*_{\ell}v^*_{n,\ell-1}\right), \\&0 = \nabla_u \left(\sum_{n=1}^{N}\sum_{\ell=1}^{L-1} \left(\mu^1_{n,\ell}\right)\zz u^*_{n,\ell}+  \alpha\left(\mu^2_{n,\ell}\right)\zz  u^*_{n,\ell}+ \xi_{n,\ell}\zz u^*_{n,\ell}\right).
		\end{align*}		
		Then, we obtain that $\xi_{n,\ell}=\mu^1_{n,\ell}=\mu^2_{n,\ell}= 0$ for all $n\in[N]$ and $\ell\in[L]$ by mathematical induction. 
		This completes the proof.
	\end{proof}
	
	Under MFCQ, we can obtain the following equivalence between
	the limiting stationary points and KKT points of problem \eqref{eq:dnn33}. 
	\begin{theorem}\label{thm:cone1}
		$(w^*,b^*,v^*,u^*)$ is a {limiting stationary} point of problem \eqref{eq:dnn33} if and only if \\$(w^*,b^*,v^*,u^*)$ is a KKT  point of problem \eqref{eq:dnn33}.
	\end{theorem}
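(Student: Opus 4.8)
The plan is to read the limiting stationarity condition of Definition~\ref{def:sta} directly as $0\in\partial\mathcal{O}(\bar z)+\mathcal{N}_{\Omega_2}(\bar z)$ with $\bar z=(w^*,b^*,v^*,u^*)$, and to show that this single inclusion, once split into the four coordinate blocks $w,b,v,u$, is exactly the KKT system \eqref{eq:kktdnn2-1}--\eqref{eq:kktdnn2-4}. Two computations drive the argument: a blockwise description of the objective subdifferential $\partial\mathcal{O}(\bar z)$, and a representation of the normal cone $\mathcal{N}_{\Omega_2}(\bar z)$ as the cone generated by the active constraint gradients.

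First I would compute $\partial\mathcal{O}(\bar z)$. The key observation is that $\mathcal{O}$ is a \emph{separable} sum over the blocks: the part $\tfrac1N\sum_n\|v_{n,L}-y_n\|^2+\mathcal{R}_2(v)+\beta\zz v$ is smooth and depends only on $v$, the term $\mathcal{R}_1(w)$ is convex and depends only on $w$, the term $-\beta\zz\sigma(u)$ is locally Lipschitz and depends only on $u$, and there is no dependence on $b$. By the sum rule for the limiting subdifferential of a separable function, $\partial\mathcal{O}(\bar z)$ factorizes as $\partial_w\bar{\mathcal{O}}(w^*,v^*)\times\{0\}\times\{\nabla_v(\cdots)\}\times\partial_u(-\beta\zz\sigma(u^*))$; here $\partial_w\bar{\mathcal{O}}=\lambda_w\,\partial(\sum_\ell\|W_\ell\|_{2,1})$ coincides with the convex subdifferential because $\mathcal{R}_1$ is convex, and $\partial_u(-\beta\zz\sigma(u^*))$ is the componentwise product of the singletons $\{-\beta_i\}$ (when $u^*_i>0$) or $\{-\alpha\beta_i\}$ (when $u^*_i<0$) and the two-point sets $\{-\beta_i,-\alpha\beta_i\}$ (when $u^*_i=0$). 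These are precisely the objective terms appearing in \eqref{eq:kktdnn2-1} and \eqref{eq:kktdnn2-3}.

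Next I would describe $\mathcal{N}_{\Omega_2}(\bar z)$. Writing $\Omega_2=\{z:\mathcal{C}(v,u)\le 0,\ h(z):=u-\Psi(v)w-Ab=0\}$ with $\mathcal{C}$ and $h$ smooth, the inclusion ``cone of active gradients $\subseteq\mathcal{N}_{\Omega_2}(\bar z)$'' always holds, since that cone is the polar of the linearized cone, which in turn contains the tangent cone $\mathcal{T}_{\Omega_2}(\bar z)$. This easy inclusion already settles the direction KKT $\Rightarrow$ limiting stationary: given multipliers $(\mu,\xi)$ satisfying \eqref{eq:kktdnn2-1}--\eqref{eq:kktdnn2-4}, the gradient at $\bar z$ of the function $(w,b,v,u)\mapsto\mu\zz\mathcal{C}(v,u)+\xi\zz h(w,b,v,u)$ lies in $\mathcal{N}_{\Omega_2}(\bar z)$, and combining it with the matching objective subgradient yields $0\in\partial\mathcal{O}(\bar z)+\mathcal{N}_{\Omega_2}(\bar z)$. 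For the converse direction I would invoke the standard normal-cone chain rule for constraint systems \citep{roc1998var}: because Lemma~\ref{lem:mfcqsys} guarantees that MFCQ (equivalently, the no-nonzero-abnormal-multiplier qualification) holds at every point of $\Omega_2$, the generic inclusion upgrades to the equality
\[
\mathcal{N}_{\Omega_2}(\bar z)=\big\{\nabla\mathcal{C}(v^*,u^*)\zz\mu+\nabla h(\bar z)\zz\xi:\ \mu\in\R_+^{2m},\ \mu\zz\mathcal{C}(v^*,u^*)=0,\ \xi\in\R^m\big\}.
\]

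Finally I would assemble the pieces: substituting the two descriptions into $0\in\partial\mathcal{O}(\bar z)+\mathcal{N}_{\Omega_2}(\bar z)$ and reading off the $w$-, $b$-, $v$-, and $u$-blocks reproduces \eqref{eq:kktdnn2-1}--\eqref{eq:kktdnn2-3} (with the multiplier signs absorbed into the convention for $\mu$ and $\xi$), while $\mu\in\R_+^{2m}$, $\mu\zz\mathcal{C}(v^*,u^*)=0$, and $h(\bar z)=0$ give the remaining relations in \eqref{eq:kktdnn2-4}. I expect the main obstacle to be the converse normal-cone inclusion: since $\Omega_2$ is \emph{not} polyhedral (the equality constraint $u=\Psi(v)w+Ab$ is bilinear), the gradient representation cannot be obtained for free, and it is exactly the MFCQ from Lemma~\ref{lem:mfcqsys} that rules out the extra ``abnormal'' elements the limiting normal cone of a nonconvex set could otherwise contain; a secondary care point is the correct (nonconvex) limiting subdifferential of the concave term $-\beta\zz\sigma(u)$ at the kinks $u^*_i=0$.
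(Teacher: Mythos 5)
Your proposal is correct and follows essentially the same route as the paper: both hinge on Lemma~\ref{lem:mfcqsys} together with the normal-cone representation of \cite[Theorem 6.14]{roc1998var} to write $\mathcal{N}_{\Omega_2}(w^*,b^*,v^*,u^*)$ as the cone of multiplier combinations of the active constraint gradients, after which the equivalence with \eqref{eq:kktdnn2-1}--\eqref{eq:kktdnn2-4} is a blockwise identification. Your additional observations --- that the objective subdifferential factorizes by separability, that the direction KKT $\Rightarrow$ limiting stationary needs only the CQ-free inclusion, and that $\partial_u(-\beta\zz\sigma(u^*))$ is the two-point set at kinks --- are accurate refinements of the same argument rather than a different proof.
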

	\begin{proof}
		Since the MFCQ holds at $(w^*,b^*,v^*,u^*)\in\Omega_2$ for problem \eqref{eq:dnn33}, then \cite[Theorem 6.14]{roc1998var} yields that $\mathcal{N}_{\Omega_2}(w^*,b^*,v^*,u^*)$ equals to $$ \left\{ \nabla \left(\mu\zz \mathcal{C}( v^*,u^*)+\xi\zz (u^*- \Psi(v^*)w^*-Ab^*)\right)  : \,\, \mu\zz \mathcal{C}( v^*,u^*)=0,\,\,\mu\in\R_+^{2m},\,\,\xi\in\R^{m}\right\}.$$
		
		If $(w^*,b^*,v^*,u^*)$ is a  {limiting stationary} point of problem \eqref{eq:dnn33}, then there exist vectors $\mu^1,\mu^2\in\R_+^{m}$ and $\xi\in\R^{m}$ such that $[(\mu^1)\zz,(\mu^2)\zz] \mathcal{C}( v^*,u^*)=0$ and
		\begin{equation}\label{eq:lstadnn3}
			\begin{aligned}
				&0\in\partial \mathcal{O}(w^*,v^*,u^*)+\nabla \left(\left(\mu^1\right)\zz (u^*-v^*)+\left(\mu^2\right)\zz (\alpha u^*-v^*)+\xi\zz (u^*- \Psi(v^*)w^*)\right),\\
				&0=A\zz\xi.
			\end{aligned}	
		\end{equation}
		Recall the definition of $\mathcal{O}$, the relationships \eqref{eq:kktdnn2-1}--\eqref{eq:kktdnn2-4} hold with $\mu=[(\mu^1)\zz,(\mu^2)\zz]\zz$. Hence $(w^*,b^*,v^*,u^*)$ is a KKT point of problem \eqref{eq:dnn33}.
		
		Conversely, if $(w^*,b^*,v^*,u^*)$ is a KKT point of problem \eqref{eq:dnn33}, then there exist vectors $\mu\in\R_+^{2m}$ and $\xi\in\R^{m}$ such that \eqref{eq:kktdnn2-1}--\eqref{eq:kktdnn2-4} hold. Let $\mu=[(\mu^1)\zz,(\mu^2)\zz]\zz$ with $\mu^1,\mu^2\in\R_+^{m}$. 
		Since the MFCQ holds at $(w^*,b^*,v^*,u^*)\in\Omega_2$ for problem \eqref{eq:dnn33}, we then obtain \eqref{eq:lstadnn3} by the form of $\mathcal{N}_{\Omega_2}(w^*,b^*,v^*,u^*)$. This completes the proof.
	\end{proof}

	Since the feasible set $\Omega_1$ of problem \eqref{eq:dnn} is no longer smooth, then MFCQ does not hold for problem \eqref{eq:dnn}. To ensure that
	a local minimizer of problem \eqref{eq:dnn} is also an MPCC C-stationary point, the following lemma shows that the feasible set of problem \eqref{eq:dnn} satisfies the No Nonzero Abnormal Multiplier Constraint Qualification (NNAMCQ) \citep{jane2013enhanced}. It is worth noting that the MPCC linear independent CQ \citep{Scheel2000mpcc,guo2021mpcc} does not hold for problem \eqref{eq:dnn}.
	\begin{lemma}\label{lem:nnamcq}
		The NNAMCQ holds at $(w^*,b^*,v^*,u^*)\in\Omega_1$ for problem \eqref{eq:dnn}, i.e., there exist no nonzero vectors $\mu\in\R^{m},\,\,\xi\in\R^{m}$
		such that 
		\begin{equation}\label{eq:nnamcqomega1}
			0\in\partial \mu\zz (\sigma(u^*)-v^*)+ \nabla_{(v,u)}\xi\zz (u^*- \Psi(v^*)), 0=\Psi(v^*)\zz \xi, 0=A\zz\xi.
		\end{equation}
	\end{lemma}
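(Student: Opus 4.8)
The plan is to mirror the backward-induction argument used in the proof of Lemma~\ref{lem:mfcqsys}, exploiting the block-bidiagonal (staircase) structure of $\Psi(v^*)$ and the layerwise coupling $u^*_{n,\ell}-(W^*_\ell v^*_{n,\ell-1}+b^*_\ell)=0$. First I would split the candidate abnormal multipliers into per-sample, per-layer blocks, writing $\mu=((\mu_{1,1})\zz,\ldots,(\mu_{N,L})\zz)\zz$ and $\xi=(\xi_{1,1}\zz,\ldots,\xi_{N,L}\zz)\zz$ with $\mu_{n,\ell},\xi_{n,\ell}\in\R^{N_\ell}$, exactly as in the derivation of \eqref{eq:derive1}--\eqref{eq:derive2}. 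The goal is to show that the system \eqref{eq:nnamcqomega1} admits only $\mu=0,\xi=0$.

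The key step is to unpack the subdifferential inclusion in \eqref{eq:nnamcqomega1}. Since $\xi\zz(u-\Psi(v)w-Ab)$ is smooth (bilinear) whereas $\mu\zz(\sigma(u)-v)$ is linear in $v$ and nonsmooth only in $u$, the exact sum rule for the limiting subdifferential with a $C^1$ summand \citep{roc1998var}, together with the coordinatewise separability of $\sigma$, decomposes \eqref{eq:nnamcqomega1} into independent per-block conditions. The $v$-blocks reduce to the genuine equalities
$$\mu_{n,L}=0\quad\text{and}\quad \mu_{n,\ell}=-(W^*_{\ell+1})\zz\xi_{n,\ell+1}\ \text{ for }\ell<L,$$
while the $u$-blocks reduce to the inclusions
$$0\in\partial_{u_{n,\ell}}\big[\mu_{n,\ell}\zz\sigma(u^*_{n,\ell})\big]+\xi_{n,\ell},\qquad n\in[N],\ \ell\in[L].$$

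With this split in hand the induction runs from $\ell=L$ downward. The $v_{n,L}$-equality gives $\mu_{n,L}=0$ directly; since $\partial_{u_{n,L}}[\mu_{n,L}\zz\sigma(\cdot)]=\{0\}$ when $\mu_{n,L}=0$, the matching $u_{n,L}$-inclusion forces $\xi_{n,L}=0$. Assuming $\mu_{n,\ell'}=\xi_{n,\ell'}=0$ for all $\ell'>\ell$, the $v_{n,\ell}$-equality yields $\mu_{n,\ell}=-(W^*_{\ell+1})\zz\xi_{n,\ell+1}=0$, and the $u_{n,\ell}$-inclusion again yields $\xi_{n,\ell}=0$. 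Hence $\mu=0$ and $\xi=0$, contradicting the existence of a nonzero abnormal multiplier, so NNAMCQ holds.

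The induction itself is routine once the split is established, so I expect the only delicate point to be the subdifferential calculus: one must legitimately pass from the single inclusion \eqref{eq:nnamcqomega1} to the independent coordinate conditions above, which requires the exact (not merely inclusion) sum rule for the \emph{limiting} subdifferential in the presence of the smooth bilinear term and the separable product rule for $\mu\zz\sigma(u)=\sum_i\mu_i\sigma(u_i)$. The one feature that makes the equality constraint $\sigma(u)-v=0$ easier to handle than one might fear is that clearing a $\mu$-block collapses the corresponding subdifferential to $\{0\}$ \emph{regardless} of whether $u^*_{n,\ell}$ sits at a kink of $\sigma$, so no case analysis on the sign pattern of $u^*$ is ever needed.
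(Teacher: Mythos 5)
Your proof is correct and follows essentially the same route as the paper's: a backward induction over layers $\ell=L,\ldots,1$ in which the $v_{n,\ell}$-block of the stationarity system forces $\mu_{n,\ell}=0$ (via the coefficient structure of $\Psi(v^*)$) and the $u_{n,\ell}$-block then forces $\xi_{n,\ell}=0$ because $\partial_{u}\bigl[\mu_{n,\ell}\zz\sigma(u^*_{n,\ell})\bigr]=\{0\}$ once $\mu_{n,\ell}=0$. Your explicit remark that no case analysis on the sign pattern of $u^*$ is needed, and your attention to the exactness of the sum rule for the smooth bilinear summand, are both consistent with (and slightly more careful than) the paper's argument.
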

	
	\begin{proof}
		We prove that there exist no nonzero vectors \(\xi \in \mathbb{R}^{m}, \mu \in \mathbb{R}^{m}\)
		such that 
		\begin{align}
			&0= -\mu +\nabla_v \xi\zz (u^*- \Psi(v^*)w^*-Ab^*),\label{eq:kktdnn4-2}\\
			&0\in \partial_u \mu\zz\sigma(u^*)+ \xi,\label{eq:kktdnn4-3}
		\end{align}
		since it implies that there is no nonzero vectors $\mu\in\R^{m},\,\,\xi\in\R^{m}$ satisfying \eqref{eq:nnamcqomega1}.
		
		Notice that $u^*= \Psi(v^*)w^*+Ab^*$ is equivalent to $u^*_{n,\ell}- (W^*_{\ell}v^*_{n,\ell-1}+b^*_{\ell})=0$ for all $n\in[N]$ and $\ell\in[L]$, the equality \eqref{eq:kktdnn4-2} yields
		\begin{align}
			&0 = -\mu+\nabla_v \left( \sum_{n=1}^{N}\sum_{\ell=1}^{L}\xi_{n,\ell}\zz W^*_{\ell}v^*_{n,\ell-1}\right)\label{eq:derive3}.
		\end{align}		
		We first consider the coefficient with respect to $v^*_{n,L}$ in \eqref{eq:derive3} for all $n\in[N]$, which yields $0=-\mu_{n,L}$. We then consider the coefficient with respect to $u^*_{n,L}$ in \eqref{eq:kktdnn4-3}  for all $n\in[N]$, which implies $0\in\mu_{n,L}[\alpha,1]+ \xi_{n,L}$. Hence, we have $\xi_{n,L}= 0$ for all $n\in[N]$. Substituting $\xi_{n,L}=\mu_{n,L}= 0$ for all $n\in[N]$ into \eqref{eq:derive3}, we obtain that
		$$
		0 = -\mu+\nabla_v \left( \sum_{n=1}^{N}\sum_{\ell=1}^{L-1}\xi_{n,\ell}\zz W^*_{\ell}v^*_{n,\ell-1}\right).
		$$
		Then, we can derive $\xi_{n,\ell}=\mu_{n,\ell}= 0$ for all $n\in[N]$ and $\ell\in[L]$ by mathematical induction.
		This completes the proof.
	\end{proof}
	
	We end this section with a lemma 
	illustrating
	the MPCC C-stationary point of problem \eqref{eq:dnn} are necessary to be a local minimizer of  problem \eqref{eq:dnn}.
	
	\begin{lemma}\label{lem:kktwrela}
		If $(w^*,b^*,v^*,u^*)$ is a local minimizer of problem \eqref{eq:dnn}, then $(w^*,b^*,v^*,u^*)$ is also an  MPCC C-stationary point of problem \eqref{eq:dnn}.
	\end{lemma}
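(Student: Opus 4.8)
The plan is to derive the multipliers of an MPCC C-stationary point from the first-order necessary condition of the local minimizer, using the constraint qualification of Lemma \ref{lem:nnamcq}, and then to split a single constraint multiplier into the pair $(\mu^1,\mu^2)$ using the kink structure of $\sigma$. First, since $\bar{\mathcal{O}}$ is locally Lipschitz and directionally differentiable and $(w^*,b^*,v^*,u^*)$ is a local minimizer of $\bar{\mathcal{O}}$ over $\Omega_1$, the implication chain \eqref{eq:relasta} applied with $\mathcal{Z}=\Omega_1$ shows it is a limiting stationary point of \eqref{eq:dnn}, i.e.
\begin{equation*}
0\in\partial\bar{\mathcal{O}}(w^*,v^*)+\mathcal{N}_{\Omega_1}(w^*,b^*,v^*,u^*).
\end{equation*}
Because $\bar{\mathcal{O}}$ is separable, with the loss and $\mathcal{R}_2$ smooth in $v$ and only $\mathcal{R}_1$ nonsmooth in $w$, one has $\partial\bar{\mathcal{O}}(w^*,v^*)=\partial_w\mathcal{R}_1(w^*)\times\nabla_v\bar{\mathcal{O}}(w^*,v^*)$ on the $(w,v)$ slots and $0$ on the $(b,u)$ slots.

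Second, I would represent $\mathcal{N}_{\Omega_1}$. The feasible set is $\Omega_1=\{(w,b,v,u):\ \sigma(u)-v=0,\ u-\Psi(v)w-Ab=0\}$, and Lemma \ref{lem:nnamcq} guarantees NNAMCQ. Invoking the limiting normal-cone calculus for Lipschitzian equality systems under NNAMCQ (the nonsmooth analogue of \cite[Theorem 6.14]{roc1998var} used in the proof of Theorem \ref{thm:cone1}), there exist multipliers $\mu,\xi\in\R^m$ together with a selection $s_i\in\partial\sigma(u^*_i)$ such that the normal-cone element realizing the inclusion above gives \eqref{eq:mpcckktdnn-1} on the $w$- and $b$-rows, while the $v$- and $u$-rows take the form
\begin{equation*}
0=\nabla_v\bar{\mathcal{O}}(w^*,v^*)-\mu-\nabla_v\xi\zz\Psi(v^*)w^*,\qquad 0=\Diag(s)\mu+\xi.
\end{equation*}
Here the separability of $\sigma$ is used, so that $\partial(\mu\zz\sigma(u^*))=\{\Diag(s)\mu\}$ with $s_i=1$ if $u^*_i>0$, $s_i=\alpha$ if $u^*_i<0$, and $s_i\in\{\alpha,1\}$ if $u^*_i=0$.

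Third, I would split the multiplier. Since $0<\alpha<1$, the $2\times2$ matrix with rows $(1,1)$ and $(1,\alpha)$ is invertible, so setting
\begin{equation*}
\mu^1_i=\frac{s_i-\alpha}{1-\alpha}\,\mu_i,\qquad \mu^2_i=\frac{1-s_i}{1-\alpha}\,\mu_i
\end{equation*}
yields $\mu^1+\mu^2=\mu$ and $\mu^1+\alpha\mu^2=\Diag(s)\mu$, which turns the two displayed relations into \eqref{eq:mpcckktdnn-2} and \eqref{eq:mpcckktdnn-3}. It then remains to check \eqref{eq:mpcckktdnn-5} and the biactive sign condition by a case analysis over the sign of $u^*_i$: if $u^*_i>0$ then $s_i=1$, so $\mu^2_i=0$ while $v^*_i-u^*_i=0$; if $u^*_i<0$ then $s_i=\alpha$, so $\mu^1_i=0$ while $v^*_i-\alpha u^*_i=0$; and if $u^*_i=v^*_i=0$ then $v^*_i-u^*_i=v^*_i-\alpha u^*_i=0$ and the two admissible values $s_i\in\{\alpha,1\}$ force exactly one of $\mu^1_i,\mu^2_i$ to vanish, whence $\mu^1_i\mu^2_i=0\ge0$. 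In each case the products $\mu^1_i(v^*_i-u^*_i)$ and $\mu^2_i(v^*_i-\alpha u^*_i)$ vanish, establishing \eqref{eq:mpcckktdnn-5}, and the biactive condition $\mu^1_i\mu^2_i\ge0$ upgrades MPCC W-stationarity to MPCC C-stationarity.

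The main obstacle is the second step: justifying the representation of $\mathcal{N}_{\Omega_1}$ for the constraint system that is nonsmooth in $u$. Unlike the smooth feasible set $\Omega_2$ of Theorem \ref{thm:cone1}, here one must use the Lipschitzian normal-cone calculus, whose validity hinges on the absence of nonzero abnormal multipliers supplied by Lemma \ref{lem:nnamcq}, and one must keep track of the fact that it is the limiting subdifferential $\partial\sigma(0)=\{\alpha,1\}$, rather than the Clarke interval $[\alpha,1]$, that enters the selection $s$. This distinction is exactly what forces $\mu^1_i\mu^2_i=0$ at biactive indices and thereby secures the C-stationary sign condition; the remaining splitting algebra and sign bookkeeping (the convention fixing the sign of $\xi$ to match \eqref{eq:mpcckktdnn-1}) are routine.
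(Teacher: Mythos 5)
Your overall strategy is the same as the paper's: invoke NNAMCQ (Lemma \ref{lem:nnamcq}) to pass from local optimality to a limiting stationarity condition with multipliers $(\mu,\xi)$ for the Lipschitz equality system defining $\Omega_1$, and then split $\mu_i$ into $(\mu^1_i,\mu^2_i)$ by a case analysis on the sign of $u^*_i$. Your affine reparameterization $\mu^1_i=\frac{s_i-\alpha}{1-\alpha}\mu_i$, $\mu^2_i=\frac{1-s_i}{1-\alpha}\mu_i$ is exactly the paper's $\mu^1_i=t_i\mu_i$, $\mu^2_i=(1-t_i)\mu_i$ with $t_i=(s_i-\alpha)/(1-\alpha)$.

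There is, however, one genuine error in your treatment of the biactive case. You assert that the limiting subdifferential $\partial\sigma(0)$ is the two-point set $\{\alpha,1\}$ and that this is ``exactly what forces'' $\mu^1_i\mu^2_i=0$. But $\sigma(t)=\max\{t,\alpha t\}$ is convex, so its limiting subdifferential at $0$ coincides with the convex subdifferential and equals the full interval $[\alpha,1]$; the two-point set you describe is the B-subdifferential (the set of limits of nearby gradients), not the limiting subdifferential. More to the point, the object that actually enters the stationarity condition is $\partial(\mu_i\sigma)(0)$, which is the whole interval $\{s\mu_i: s\in[\alpha,1]\}$ whenever $\mu_i>0$ and collapses to the two-point set $\{\alpha\mu_i,\mu_i\}$ only when $\mu_i<0$. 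Since the sign of $\mu_i$ is not known a priori, you cannot restrict $s_i$ to $\{\alpha,1\}$, and hence cannot conclude $\mu^1_i\mu^2_i=0$. Fortunately the lemma only requires the C-stationary sign condition $\mu^1_i\mu^2_i\ge 0$, and your own splitting formula delivers this for every $s_i\in[\alpha,1]$, since $\mu^1_i\mu^2_i=\frac{(s_i-\alpha)(1-s_i)}{(1-\alpha)^2}\mu_i^2\ge 0$. With that correction your argument coincides with the paper's proof, which uses precisely the inclusion $\partial(\mu_i\sigma)(0)\subset[\alpha,1]\mu_i$ and the parameterization $t_i\in[0,1]$ to obtain $\mu^1_i\mu^2_i\ge 0$; note that the stronger conclusion $\mu^1_i\mu^2_i=0$ you aim for would amount to a form of M-stationarity, which is not what the lemma claims and is not justified by this argument.
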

	\begin{proof}
		Since $(w^*,b^*,v^*,u^*)$ is a local minimizer of problem \eqref{eq:dnn}, Lemma \ref{lem:nnamcq} yields that
		there exist vectors $\mu\in\R^{m}$ and $\xi\in\R^{m}$ satisfying
		\begin{equation}\label{eq:lstadnn4-2}
			\begin{aligned}
				&0\in  \partial \left( \mu\zz (\sigma(u^*)-v^*)\right)+\nabla_{(v,u)}\left(\bar{\mathcal{O}}(w^*,v^*	)+\xi\zz (u^*- \Psi(v^*)w^*)\right), \\
				&0=\nabla_w\bar{\mathcal{O}}(w^*,v^*	)+\Psi(v^*)\zz \xi, 0=A\zz\xi.
			\end{aligned}
		\end{equation}
		
		Notice that $\mu\zz\sigma(u^*)$ is the only nonsmooth term in \eqref{eq:lstadnn4-2}.
		We now analyze the following three cases for all $i\in[m]$.
		
		Case (i): if $u^*_i>0$, we have $v^*_i=u^*_i$ and $\partial \mu_i \sigma(u_i^*)=\{\mu_i\}$. Let ${\mu}^1_i=\mu_i$ and ${\mu}^2_i=0$, then ${\mu}^1_i(v_i^*-u^*_i)=0$ and ${\mu}^2_i(v_i^*-\alpha u^*_i)=0$.
		
		Case (ii): if $u^*_i<0$, we have $v^*_i=\alpha u^*_i$ and $\partial \mu_i \sigma(u_i^*)=\{\alpha\mu_i\} $. Let ${\mu}^1_i=0$ and ${\mu}^2_i=\mu_i$, then ${\mu}^1_i(v_i^*-u^*_i)=0$ and ${\mu}^2_i(v_i^*-\alpha u^*_i)=0$.
		
		Case (iii): if $u^*_i=0$, we have $v^*_i=0$ and
		$$\partial \mu_i \sigma(u_i^*)\subset [\alpha,1]\mu_i=
		\left\{\mu_i^1+\alpha \mu_i^2: \mu^1_i=t_i\mu_i, \mu^2_i=(1-t_i)\mu_i, t_i\in [0,1] \right\}.$$
		In this case, we have ${\mu}^1_i(v_i^*-u^*_i)=0$, ${\mu}^2_i(v_i^*-\alpha u^*_i)=0$ and $\mu^1_i \mu^2_i\ge 0$.
		
		Combining the above three cases, it holds that
		\begin{equation}
			\begin{aligned}
				\partial \mu\zz \sigma(u^*)\subset \bigg\{&\mu^1+\alpha \mu^2:  \mu^1=t \circ \mu, \mu^2=(e_m-t) \circ \mu,\\& \left({\mu}^1\right)\zz(v^*-u^*)=0, \left({\mu}^2\right)\zz(v^*-\alpha u^*)=0, t\in \R^m_{+}, t\le e_m\bigg\}.
			\end{aligned}
		\end{equation}
		
		Together with {the inclusion} $0\in\partial\mu\zz \sigma(u^*)+\xi$, we have
		\begin{equation*}
			\begin{aligned}
				0\in \bigg\{&\mu^1+\alpha \mu^2+\xi:   \mu^1=t \circ \mu, \mu^2=(e_m-t) \circ \mu,\\& \left({\mu}^1\right)\zz(v^*-u^*)=0, \left({\mu}^2\right)\zz(v^*-\alpha u^*)=0, t\in \R^m_{+}, t\le e_m\bigg\}.
			\end{aligned}
		\end{equation*}
		Hence there exist $\bar{\mu}^1$ and $\bar{\mu}^2$ such that
		\begin{align}
			&0= \bar{\mu}^1+\alpha \bar{\mu}^2+\xi,  
			\left(\bar{\mu}^1\right)\zz(v^*-u^*)=0, \left(\bar{\mu}^2\right)\zz(v^*-\alpha u^*)=0,\label{eq:kktwrela-1}\\
			&\bar{\mu}^1=t \circ \mu, \bar{\mu}^2=(e_m-t) \circ \mu, \bar{\mu}^1 \circ \bar{\mu}^2\ge 0, t\in \R^m_{+}, t\le e_m.\label{eq:kktwrela-2}
		\end{align}	
		Combining \eqref{eq:lstadnn4-2}, \eqref{eq:kktwrela-1}, \eqref{eq:kktwrela-2} and $\sigma(u^*)-v^*=0$, we obtain \eqref{eq:mpcckktdnn-1}--\eqref{eq:mpcckktdnn-5}, and $\bar{\mu}^1 \circ \bar{\mu}^2 \ge 0$ with $\bar{\mu}^1, \bar{\mu}^2$ instead of $\mu^1,\mu^2$, respectively. This completes the proof.
	\end{proof}
	

	\section{Model Analysis}\label{sec:modelana}

	In this section, we aim to 
	theoretically investigate the relationship between problems \eqref{eq:dnn} and \eqref{eq:dnn33}.
	
	\subsection{The Existence and Boundedness of the Solution Set}\label{sec:modelana1}
	In this subsection, we show that the solution set of problem \eqref{eq:dnn33} is not empty and bounded.
	%
	First, we define a level set $\Omega_{\theta}$ of the objective function of problem \eqref{eq:dnn33} by
	\begin{equation}\label{eq:theta}
		\Omega_{\theta}=\left\{(w,b,v,u) \in \Omega_2: \mathcal{O}(w,v,u) \leq \theta\right\} \text{ with }\theta> \frac{1}{N}\|Y\|_F^2,
	\end{equation}
	where $Y=(y_1,y_2,\ldots,y_N)$ is the label matrix.
	Clearly $0\in\Omega_{\theta}$.
	For all $(w,b,v,u)\in\Omega_\theta$, it holds that $\mathcal{R}_1(w)=\lambda_w \sum_{\ell=1}^L \|W_{\ell}\|_{2,1}\leq \theta$ and $\mathcal{R}_2(v)=\lambda_v\|v\|^2\leq \theta$, which further implies that $\|w\|$ and $\|v\|$ are bounded.
	For brevity, we let
	$\theta_w:=\frac{\theta}{\lb_w}\sqrt{\overline{N}+N_0}$, $\theta_v:=\sqrt{\frac{\theta}{\lb_v}}$ be the upper bounds of $\|w\|$ and $\|v\|$ over the set $\Omega_{\theta}$, respectively.
	
	\begin{theorem}\label{thm:nonem}
		The set $\Omega_\theta$ is bounded.
		Furthermore, the solution set of problem \eqref{eq:dnn33} is not empty and bounded.
	\end{theorem}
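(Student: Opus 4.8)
The plan is to prove the statement in two stages: first establish that the level set $\Omega_\theta$ is bounded, and then deduce existence and boundedness of the solution set by a compactness argument. As already observed just before the statement, on $\Omega_\theta$ the inequalities $\mathcal{R}_1(w)\le\theta$ and $\mathcal{R}_2(v)\le\theta$ give the bounds $\|w\|\le\theta_w$ and $\|v\|\le\theta_v$; the work that remains for the first stage is to bound the auxiliary variables $u$ and the biases $b$, which the two regularizers do not control directly.

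To bound $u$ I would exploit the penalty term together with feasibility. On $\Omega_2$ the two inequality constraints read $v-u\ge 0$ and $v-\alpha u\ge 0$, so componentwise $v\ge\max\{u,\alpha u\}=\sigma(u)$, i.e. $v-\sigma(u)\ge 0$. Since $\beta>0$, every term of $\mathcal{O}(w,v,u)=\bar{\mathcal{O}}(w,v)+\beta\zz(v-\sigma(u))$ is nonnegative on $\Omega_2$, so $\mathcal{O}\le\theta$ forces $\beta\zz(v-\sigma(u))\le\theta$ and hence, writing $\underline\beta:=\min_{\ell\in[L]}\beta_\ell>0$, each coordinate satisfies $0\le v_i-\sigma(u_i)\le\theta/\underline\beta$. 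I would then split into cases: when $u_i\ge 0$ we have $\sigma(u_i)=u_i$ and $0\le u_i\le v_i$, whence $|u_i|\le\theta_v$; when $u_i<0$ we have $\sigma(u_i)=\alpha u_i$, and $v_i-\alpha u_i\le\theta/\underline\beta$ together with $v_i\ge-\theta_v$ yields $|u_i|=-u_i\le(\theta/\underline\beta+\theta_v)/\alpha$. This produces a uniform bound $\|u\|\le\theta_u$. I expect this step, and in particular the negative-coordinate case where the bound is supplied entirely by the penalty parameter rather than by the regularizers, to be the crux of the argument and the reason the penalty term is indispensable.

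Next I would bound $b$ using the equality constraint. Writing $u=\Psi(v)w+Ab$ componentwise as $u_{n,\ell}=W_\ell v_{n,\ell-1}+b_\ell$ and solving for $b_\ell$ (say with $n=1$, recalling $v_{1,0}=x_1$) gives $b_\ell=u_{1,\ell}-W_\ell v_{1,\ell-1}$, so that $\|b_\ell\|\le\|u_{1,\ell}\|+\|W_\ell\|\,\|v_{1,\ell-1}\|$ is controlled by the already-established bounds on $u$, $w$, $v$ and by the fixed data. Collecting the bounds on $w,b,v,u$ shows that $\Omega_\theta$ is bounded.

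For the second stage, I note that $\Omega_2$ is closed, since its defining equalities and inequalities are continuous, and $\mathcal{O}$ is continuous, so the sublevel set $\Omega_\theta$ is closed; being also bounded, it is compact. Evaluating at the origin gives $\mathcal{O}(0,0,0,0)=\tfrac1N\|Y\|_F^2<\theta$, so the infimum of $\mathcal{O}$ over $\Omega_2$ lies strictly below $\theta$; consequently every point of $\Omega_2\setminus\Omega_\theta$ has $\mathcal{O}>\theta$ and cannot be optimal, whence minimizing $\mathcal{O}$ over $\Omega_2$ is equivalent to minimizing it over the compact set $\Omega_\theta$. A continuous function attains its minimum on a compact set, which yields nonemptiness of the solution set, and the solution set is contained in $\Omega_\theta$ and is therefore bounded.
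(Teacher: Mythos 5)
Your proof is correct and uses the same essential ingredients as the paper's: the regularizers bound $w$ and $v$, while the penalty term $\beta\zz(v-\sigma(u))$ together with the inequality constraints supplies the remaining control. The only difference is bookkeeping order — you bound $u$ directly from the penalty and then recover $b$ from the equality constraint $b_\ell=u_{n,\ell}-W_\ell v_{n,\ell-1}$, whereas the paper first bounds $b$ (above via feasibility, below via the penalty) and then gets $u=\Psi(v)w+Ab$; your ordering is, if anything, slightly more streamlined.
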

	
	\begin{proof}
		Since $(w,b,v,u) \in \Omega_2$, it holds that $
		b_{\ell}\leq v_{n,\ell}- W_{\ell}v_{n,\ell-1}$
		for all $\ell\in[L]$ and $n\in[N]$.
		Together with {the fact that} $\theta_w$, $\theta_v$ being the upper bounds of $\|w\|$ and $\|v\|$ over the set $\Omega_{\theta}$, respectively, we obtain that $\|b_{+}\|_{\infty}$ is bounded.
		
		Since $\mathcal{O}(w,v,u)\leq \theta$,
		$\mathcal{R}_1(w)$, $\mathcal{R}_2(v)$ are nonnegative, then for all $n\in[N]$, $\ell\in[L]$, $j\in[N_{\ell}]$, it holds that
		$$
		(v_{n,\ell})_j- \sigma({b}_{\ell,j}+W_{\ell,j}v_{n,\ell-1}) \leq \frac{\theta}{\beta_{\ell}},
		$$
		which further implies that either
		$
		(v_{n,\ell})_j-\frac{\theta}{\beta_{\ell}}\leq {b}_{\ell,j}+W_{\ell,j}v_{n,\ell-1}
		$
		or
		$
		(v_{n,\ell})_j-\frac{\theta}{\beta_{\ell}}\leq \alpha({b}_{\ell,j}+W_{\ell,j}v_{n,\ell-1}).
		$
		By $(w,b,v,u)\in\Omega_{\theta}$, the definition of $\theta_w$ and $\theta_v$, we have $\frac{1}{\alpha}((v_{n,\ell})_j-\frac{\theta}{\beta_{\ell}})-W_{\ell,j}v_{n,\ell-1}> -\frac{1}{\alpha}(\theta_v+\frac{\theta}{\beta_{\ell}})-\overline{N}\theta_v\theta_w$ and $(v_{n,\ell})_j-\frac{\theta}{\beta_{\ell}}-W_{\ell,j}v_{n,\ell-1}> -\theta_v-\frac{\theta}{\beta_{\ell}}-\overline{N}\theta_v\theta_w$ for all $\ell\in[L]$ and $n\in[N]$. Since $0<\alpha<1$, it holds that
		\begin{equation}\label{eq:relabb}
			\begin{aligned}
				{b}_{\ell,j}&\geq \min\left\{\frac{1}{\alpha}\left((v_{n,\ell})_j-\frac{\theta}{\beta_{\ell}}\right)-W_{\ell,j}v_{n,\ell-1},(v_{n,\ell})_j-\frac{\theta}{\beta_{\ell}}-W_{\ell,j}v_{n,\ell-1}\right\}\\&> -\frac{1}{\alpha}\left(\theta_v+\frac{\theta}{\beta_{\ell}}\right)-\overline{N}\theta_v\theta_w.
			\end{aligned}
		\end{equation}
		
		It follows from the boundedness of $\|b_{+}\|_{\infty}$ that $\|b\|$ is also bounded. Hence $\|u\|=\|\Psi(v)w+Ab\|$ is bounded, too. These facts imply that  $\Omega_{\theta}$ is a bounded set. Together with {the inclusion} $0\in\Omega_{\theta}$ and the continuity of the objective function of problem \eqref{eq:dnn33},  we obtain that  the solution set of problem \eqref{eq:dnn33} is nonempty and bounded.
	\end{proof}
	
	
	\subsection{Exact Penalization}
	
	In this subsection, we consider problem \eqref{eq:dnn33} with penalty parameter $\beta$ satisfying
	\begin{equation}\label{eq:betacondi}
			\beta_{\ell}>LL_{\bar{\mathcal{O}}} \max\{\theta_w,1\}^{L}+2\sum_{j=\ell+1}^L\beta_j \theta_w\max\{\theta_w,1\}^{j-\ell-1} \text{ for all } \ell\in[L],
	\end{equation}
	and reveal the relationship between problems \eqref{eq:dnn33} and \eqref{eq:dnn}, where $L_{\bar{\mathcal{O}}}$ is the Lipschitz constant of the function $\bar{\mathcal{O}}$ over $\Omega_\theta$. We first present a lemma, which shows that any {limiting stationary} point of problem \eqref{eq:dnn33} is in the feasible set $\Omega_1$ of problem \eqref{eq:dnn}.
	
	\begin{lemma}\label{lem:lsta}
		Let the penalty parameter $\beta$ satisfy \eqref{eq:betacondi}. If $(w^*,b^*,$ $v^*,u^*)\in \{(w,b,v,u) \in \Omega_2: $$\mathcal{O}(w,v,u) < \theta\}$ is {a} {limiting stationary} point of problem~\eqref{eq:dnn33},
		then \((w^*,b^*,v^*,u^*)\) is in the feasible set $\Omega_1$ of problem \eqref{eq:dnn}.
	\end{lemma}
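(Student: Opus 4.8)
The plan is to read Lemma \ref{lem:lsta} as an exact-penalization statement: once the weights $\beta_\ell$ obey \eqref{eq:betacondi}, the KKT system of \eqref{eq:dnn33} can hold only if the penalized gap $v^*-\sigma(u^*)$ vanishes, and vanishing of that gap is exactly the extra requirement that upgrades membership in $\Omega_2$ to membership in $\Omega_1$ (the affine constraint $u=\Psi(v)w+Ab$ is already shared by both feasible sets). By Theorem \ref{thm:cone1}, the limiting stationary point is a KKT point, so I may fix multipliers $\mu=((\mu^1)\zz,(\mu^2)\zz)\zz\in\R_+^{2m}$ and $\xi\in\R^{m}$ satisfying \eqref{eq:kktdnn2-1}--\eqref{eq:kktdnn2-4}. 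Since $(w^*,b^*,v^*,u^*)$ lies in $\Omega_\theta$ and every term of $\mathcal{O}$ is nonnegative on $\Omega_2$, I get the uniform bounds $\|w^*\|\le\theta_w$, $\|v^*\|\le\theta_v$, and $\|\nabla_{v_{n,\ell}}\bar{\mathcal{O}}(w^*,v^*)\|\le L_{\bar{\mathcal{O}}}$; these are the quantities the induction will consume.

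First I would extract two block identities indexed by $(n,\ell)$. Differentiating \eqref{eq:kktdnn2-2} in $v_{n,\ell}$ gives $\mu^1_{n,\ell}+\mu^2_{n,\ell}=\nabla_{v_{n,\ell}}\bar{\mathcal{O}}+\beta_\ell\one-(W^*_{\ell+1})\zz\xi_{n,\ell+1}$, where the coupling term is absent for $\ell=L$ because $V_L$ does not appear in $\Psi(v)$; differentiating \eqref{eq:kktdnn2-3} in $u_{n,\ell}$ gives $\xi_{n,\ell}=\beta_\ell s_{n,\ell}-\mu^1_{n,\ell}-\alpha\mu^2_{n,\ell}$ for some $s_{n,\ell}\in\partial\sigma(u^*_{n,\ell})\subseteq[\alpha,1]^{N_\ell}$. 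The crux is the contrapositive coming from complementarity \eqref{eq:kktdnn2-4}: if the gap $v^*_{n,\ell,j}-\sigma(u^*_{n,\ell,j})>0$ at some coordinate, then both components of $\mathcal{C}(v^*,u^*)$ are strictly negative there, so $\mu^1_{n,\ell,j}=\mu^2_{n,\ell,j}=0$, and the first identity then forces $\beta_\ell=((W^*_{\ell+1})\zz\xi_{n,\ell+1})_j-(\nabla_{v_{n,\ell}}\bar{\mathcal{O}})_j\le L_{\bar{\mathcal{O}}}+\theta_w\|\xi_{n,\ell+1}\|$. Thus a positive gap at layer $\ell$ is possible only if $\beta_\ell\le L_{\bar{\mathcal{O}}}+\theta_w\|\xi_{n,\ell+1}\|$, and the whole task reduces to contradicting this via \eqref{eq:betacondi}.

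I would then run a backward induction on $\ell=L,L-1,\dots,1$ proving simultaneously (a) $v^*_{n,\ell}=\sigma(u^*_{n,\ell})$ for all $n$, and (b) the a priori bound $\|\xi_{n,\ell}\|\le (1-\alpha)\beta_\ell+L_{\bar{\mathcal{O}}}+\theta_w\|\xi_{n,\ell+1}\|$, read with $\xi_{n,L+1}=0$. For the base case $\ell=L$ the coupling term drops, so the positive-gap alternative reads $\beta_L\le L_{\bar{\mathcal{O}}}$, contradicting $\beta_L>L\,L_{\bar{\mathcal{O}}}\max\{\theta_w,1\}^{L}\ge L_{\bar{\mathcal{O}}}$; hence (a) holds at $L$, and substituting the now-known active set into the second identity yields (b). In the inductive step, feasibility at the layers above $\ell$ pins down, coordinate by coordinate, which inequality is active and the value of $s_{n,j}$, and eliminating $\mu$ between the two identities delivers (b) at those layers; unrolling gives $\theta_w\|\xi_{n,\ell+1}\|\le\sum_{j=\ell+1}^{L}\theta_w^{\,j-\ell}\big((1-\alpha)\beta_j+L_{\bar{\mathcal{O}}}\big)$. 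Condition \eqref{eq:betacondi} is exactly an over-estimate of this: the term $L\,L_{\bar{\mathcal{O}}}\max\{\theta_w,1\}^{L}$ dominates the accumulated $L_{\bar{\mathcal{O}}}$-contributions, while $2\sum_{j>\ell}\beta_j\theta_w\max\{\theta_w,1\}^{j-\ell-1}$ dominates the $\beta_j$-contributions (using $1-\alpha<1<2$ and $\theta_w^{\,\cdot}\le\max\{\theta_w,1\}^{\,\cdot}$). Hence $\beta_\ell>L_{\bar{\mathcal{O}}}+\theta_w\|\xi_{n,\ell+1}\|$, which contradicts the positive-gap alternative and proves (a) at layer $\ell$; bound (b) then closes the induction, and $v^*=\sigma(u^*)$ gives $(w^*,b^*,v^*,u^*)\in\Omega_1$.

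The step I expect to be the main obstacle is the recursive control of $\|\xi_{n,\ell}\|$ at the doubly-active coordinates $u^*_{n,\ell,j}=v^*_{n,\ell,j}=0$. There neither $s_{n,\ell,j}\in[\alpha,1]$ nor the split between $\mu^1_{n,\ell,j}$ and $\mu^2_{n,\ell,j}$ is uniquely determined by the KKT system, so I cannot plug in a single active value; instead I must bound $\xi_{n,\ell,j}$ uniformly over all admissible multipliers. Writing $\mu^1=P-\mu^2$ with $P=\mu^1+\mu^2$ and substituting into the $u$-identity turns $\xi_{n,\ell}$ into $\beta_\ell(s_{n,\ell}-\one)+(1-\alpha)\mu^2+(\text{terms bounded by }L_{\bar{\mathcal{O}}}+\theta_w\|\xi_{n,\ell+1}\|)$, and it is precisely this worst case that produces the $(1-\alpha)\beta_\ell$ summand in (b) and hence the factor in front of the $\beta_j$-sum in \eqref{eq:betacondi}. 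The remaining work — tracking the geometric factors $\max\{\theta_w,1\}^{\,\cdot}$ (which arise from the possibly expanding weight norms when $\theta_w>1$) and absorbing the dimensional constants already carried by $\theta_w$, $\theta_v$, $L_{\bar{\mathcal{O}}}$ — is routine bookkeeping once the two identities and the contrapositive are in hand.
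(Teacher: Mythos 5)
Your argument is correct in substance but follows a genuinely different route from the paper's. The paper proves Lemma \ref{lem:lsta} by a \emph{primal} argument: it only uses C-stationarity (the inequality $\mathcal{O}^{\circ}(w^*,v^*,u^*;d)\ge 0$), constructs an explicit feasible perturbation that pulls $v_{n,\bar\ell}$ down onto $\sigma(u_{n,\bar\ell}^*)$ and propagates the correction forward through layers $\bar\ell+1,\dots,L$ to stay in $\Omega_2$, and shows via the $\ell_1$-norm estimates \eqref{eq:vv8ineq}--\eqref{eq:contradic2} that the generalized directional derivative along this direction is strictly negative under \eqref{eq:betacondi}. You instead argue \emph{dually}: invoke Theorem \ref{thm:cone1} (legitimately, as it precedes the lemma and rests only on Lemma \ref{lem:mfcqsys}) to obtain multipliers, observe that a positive gap $v^*_{n,\ell,j}>\sigma(u^*_{n,\ell,j})$ kills both multipliers at that coordinate by complementarity and forces $\beta_\ell\le L_{\bar{\mathcal{O}}}+\theta_w\|\xi_{n,\ell+1}\|$, and then contradict this with a backward-recursive bound on $\|\xi_{n,\ell}\|$. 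Both proofs hinge on the same recursive structure that \eqref{eq:betacondi} is built to dominate, and your unrolled estimate is indeed absorbed by it (the $L\,L_{\bar{\mathcal{O}}}\max\{\theta_w,1\}^L$ term covers the $\le L$ accumulated Lipschitz contributions and $1-\alpha<2$ covers the $\beta_j$ coefficients). The paper's route buys a weaker hypothesis (C-stationarity suffices there), while yours is more transparent about where \eqref{eq:betacondi} comes from, localizes the failure to a single coordinate rather than an index set, and avoids the $\limsup$ manipulations around \eqref{eq:cstadudv}. Two small cautions. First, your bookkeeping must be carried out in a norm for which $\theta_w$ bounds the relevant action of $(W^*_{\ell+1})\zz$ \emph{without} picking up a dimension factor at each layer: with $\|\xi_{n,\ell}\|_\infty$ the recursion $\|\xi_{n,\ell}\|_\infty\le(1-\alpha)\beta_\ell+L_{\bar{\mathcal{O}}}+\theta_w\|\xi_{n,\ell+1}\|_\infty$ closes under \eqref{eq:betacondi}, whereas naively working in the Euclidean norm introduces $\sqrt{N_\ell}$ factors that compound geometrically and are not obviously covered by the stated constants. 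Second, the limiting subdifferential of $-\beta_\ell\sigma$ at $u^*_i=0$ is the two-point set $\{-\beta_\ell,-\alpha\beta_\ell\}$ rather than the interval; your relaxation $s_{n,\ell}\in[\alpha,1]$ is a harmless over-approximation since you only need an upper bound on $|s|$, but it should be stated as a containment rather than an equality.
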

	\begin{proof}
		If 
		\((w^*,b^*,v^*,u^*)\) is a limiting stationary point of problem~\eqref{eq:dnn33}, then \((w^*,b^*,v^*,u^*)\) is a C-stationary point of problem~\eqref{eq:dnn33}, which implies that
		\begin{equation}
			\label{eq:oclar}
			\mathcal{O}^{\circ}(w^*,v^*,u^*;d_w,d_v,d_u) \geq 0,\,\,\text{for all }(d_w,d_v,d_u,d_b)\in\mathcal{T}_{\Omega_2}(w^*,b^*,v^*,u^*).
		\end{equation}
		
		We then prove $v^*_{n,\ell}=\sigma(u^*_{n,\ell})$ for all $n\in[N]$ and $\ell=L,L-1,\ldots,1$ by mathematical induction.
		
		Assume on contradiction that $(w^*,b^*,v^*,u^*) \notin \Omega_{1}$. When $\ell$ equals $L$, let $\mathcal{I}_{L}:=\{n: v^*_{n,L}\geq\sigma(u^*_{n,L}), v^*_{n,L}\neq\sigma(u^*_{n,L})\}$. Without loss of generality, we assume that $\mathcal{I}_{L}$ is not an empty set. We set
		$$d_v=((d_v)_{1,1}\zz,(d_v)_{2,1}\zz,\ldots,(d_v)_{1,L}\zz,(d_v)_{2,L}\zz,\ldots,(d_v)_{N,L}\zz)\zz$$ for all $n\in[N]$, $\ell\in[L]$, $(d_v)_{n,\ell}\in\R^{N_{\ell}}$ and
		\begin{equation*}
			(d_v)_{n, \ell}=\left\{\begin{array}{lr}
				0, &\text { if }\ell<L \text { or } n\not\in\mathcal{I}_L,\hspace{0.35cm} \\
				\sigma(u_{n,\ell}^*)- v^*_{n,\ell}, &\text { if }\,\ell=L \text { and } n\in\mathcal{I}_L. \\
			\end{array}\right.
		\end{equation*}Clearly $d_v\leq 0$ and $(w^*,b^*,v^*+td_v,u^*) \in \Omega_2$ for all $0\leq t<1$. Hence $(0,0,d_v,0)\in\mathcal{T}_{\Omega_2}(w^*,b^*,v^*,u^*)$.
		
		Since $\mathcal{O}(w^*,v^*,u^*)<\theta$ and $\mathcal{O}$ is locally Lipschitz continuous,  there exists $\bar{\epsilon} \in(0,1]$ such that $\mathcal{O}(w,v,u)<\theta$ for all $(w,v,u)\in\mathcal{B}_{\bar{\epsilon}}(w^*,v^*,u^*)$. Furthermore, for any $(w,v,u)\in\mathcal{B}_{\bar{\epsilon}}(w^*,v^*,u^*)$, there exists $\bar{t}\in(0,1]$ such that $\mathcal{O}(w,v+td_v,u)<\theta$ for all $0<t<\bar{t}$.
		
		Together with the inequalities $d_v\leq 0$ and 
		$\bar{\mathcal{O}}(w,v+td_v)<\mathcal{O}(w,v+td_v,u)<\theta$,  it holds that
		\begin{equation}\label{eq:vuconstru2}
			\begin{aligned}
				&\frac{1}{t}\left(\mathcal{O}(w,v+t d_v,u)-\mathcal{O}(w,v,u)\right)\\
				=&\frac{1}{t}\left(\bar{\mathcal{O}}(w,v+td_v)+\beta \zz (v+td_v-\sigma(u))
				-\bar{\mathcal{O}}(w,v)-\beta \zz (v-\sigma(u))\right)\\
				=&\frac{1}{t}\left(\bar{\mathcal{O}}(w,v+td_v)
				-\bar{\mathcal{O}}(w,v)\right)+\beta \zz d_v
				\leq (L_{\bar{\mathcal{O}}}-\beta_L) \sum_{n\in\mathcal{I}_{L}}\left\|v^*_{n,L}-	\sigma(u_{n,L}^*)\right\|_1.
			\end{aligned}
		\end{equation}
		Hence we derive $$\mathcal{O}^{\circ}(w^*,v^*,u^*;0,d_v,0) \leq\limsup _{(w,v,u) \rightarrow (w^*,v^*,u^*), t \downarrow 0} (L_{\bar{\mathcal{O}}}-\beta_L) \sum_{n\in\mathcal{I}_{L}}\left\|v^*_{n,L}-	\sigma(u_{n,L}^*)\right\|_1<0.$$
		
		This leads to a contradiction.  Hence, it holds that $v^*_{n,L}=\sigma(u^*_{n,L})$ for all $n\in[N]$.
		
		Then, we suppose that $v^*_{n,\ell}=\sigma(u^*_{n,\ell})$ for all $n\in[N]$ and $\ell = L,L-1,\ldots,\bar{\ell}+1$. Let $\mathcal{I}_{\bar{\ell}}:=\{n: v^*_{{n},\bar{\ell}}\geq\sigma(u^*_{{n},\bar{\ell}}), v^*_{{n},\bar{\ell}}\neq\sigma(u^*_{{n},\bar{\ell}})\}$. Without loss of generality, we suppose that $\mathcal{I}_{\bar{\ell}}$ is not an empty set. Then for all $n\in[N]$ and $\ell\in[L]$, we set
		\begin{equation}\label{eq:vuconstrut}
			\left\{\begin{aligned}
				&\tilde{v}^{\epsilon}_{n, \ell}=v^*_{n,\ell},\,\,\tilde{u}^{\epsilon}_{n, \ell}=u^*_{n,\ell}, \hspace{2.9cm} \text{ if } \ell<\bar{\ell} \text { or } \ell=\bar{\ell}, n\not\in\mathcal{I}_{\bar{\ell}},\\
				&\tilde{v}^{\epsilon}_{n, \ell}=\epsilon\sigma(u_{n,\ell}^*)+(1-\epsilon) v^*_{n,\ell},\,\,\tilde{u}^{\epsilon}_{n, \ell}={u}^*_{n, \ell},   \text{ if }\ell=\bar{\ell} \text { and } n\in\mathcal{I}_{\bar{\ell}}, \\
				&\tilde{v}^{\epsilon}_{n, \ell}=\sigma(\tilde{u}^{\epsilon}_{n,\ell}), \,\,\tilde{u}^{\epsilon}_{n, \ell}=w^*_{\ell}\tilde{v}^{\epsilon}_{n, \ell-1}+b^*_{\ell},  \hspace{0.75cm}\text{ if }\ell>\bar{\ell}. \\
			\end{aligned}\right.
		\end{equation}
		Clearly $(w^*,b^*, \tilde{v}^{\epsilon}, \tilde{u}^{\epsilon}) \in \Omega_2$, $\lim_{\epsilon\downarrow 0} (\tilde{v}^{\epsilon}-v^*)/\epsilon$ and $\lim_{\epsilon\downarrow 0} (\tilde{u}^{\epsilon}-u^*)/\epsilon$ exist. Let
		\begin{equation}
			\label{eq:dudvdefine}
			d_v=\lim_{\epsilon_k \downarrow 0} d_v^{(k)},\,d_v^{(k)}=\frac{\tilde{v}^{\epsilon_k}-v^*}{\epsilon_{k}},\, d_u=\lim_{\epsilon_k \downarrow 0} d_u^{(k)},\,d_u^{(k)}=\frac{\tilde{u}^{\epsilon_k}-u^*}{\epsilon_{k}},
		\end{equation}
		then we have $(0,0,d_v,d_u)\in\mathcal{T}_{\Omega_2}(w^*,b^*,v^*,u^*)$. Besides, it follows from \eqref{eq:dudvdefine}, the Lipschitz continuity and directional differentiability of $\mathcal{O}$  that
		\begin{align}
			&\limsup _{(w,v,u) \rightarrow (w^*,v^*,u^*), t \downarrow 0} \frac{1}{t}\left(\mathcal{O}(w,v+t d_v,u+td_u)-\mathcal{O}(w,v,u)\right)\notag\\
			=&\limsup _{(w,v,u) \rightarrow (w^*,v^*,u^*), \epsilon_k \downarrow 0} \frac{1}{\epsilon_k}\left(\mathcal{O}\left(w,v+\epsilon_k d_v,u+\epsilon_k d_u\right)-\mathcal{O}(w,v,u)\right)\notag\\
			=&\limsup _{(w,v,u) \rightarrow (w^*,v^*,u^*), \epsilon_k \downarrow 0} \frac{1}{\epsilon_k}\left(\mathcal{O}\left(w,v+\epsilon_k d^{(k)}_v,u+\epsilon_k d^{(k)}_u\right)-\mathcal{O}(w,v,u)\right)\label{eq:cstadudv}\\
			&\hspace{3.5cm}-\frac{1}{\epsilon_k}\left(\mathcal{O}\left(w,v+\epsilon_k d^{(k)}_v,u+\epsilon_k d^{(k)}_u\right)-\mathcal{O}(w,v+\epsilon_k d_v,u+\epsilon_k d_u)\right)\notag\\
			=&\limsup _{(w,v,u) \rightarrow (w^*,v^*,u^*), \epsilon_k \downarrow 0} \frac{1}{\epsilon_k}\left(\mathcal{O}\left(w,v+\epsilon_k d^{(k)}_v,u+\epsilon_k d^{(k)}_u\right)-\mathcal{O}(w,v,u)\right).\notag
		\end{align}
		
		Since $\mathcal{O}(w^*,v^*,u^*)<\theta$ and $\mathcal{O}$ is locally Lipschitz continuous,  there exists $\bar{\epsilon} \in(0,1]$ such that $\mathcal{O}(w,v,u)<\theta$ for all $(w,v,u)\in\mathcal{B}_{\bar{\epsilon}}(w^*,v^*,u^*)$. Furthermore, for any $(w,v,u)\in\mathcal{B}_{\bar{\epsilon}}(w^*,v^*,u^*)$, there exists $\bar{t}\in(0,1]$ such that $\mathcal{O}(w,v+td_v,u+td_u)<\theta$ for all $0<t<\bar{t}$. Together with {the equalities} \eqref{eq:dudvdefine}, there exists $\tilde{\epsilon}\in(0,1]$ such that $\mathcal{O}(w,v+\epsilon d^{(k)}_v,u+\epsilon d^{(k)}_u)<\theta$ for all $k\in\mathbb{N}$ and $0<\epsilon<\tilde{\epsilon}$.  Without loss of generality, we assume that $\epsilon_{k}<\tilde{\epsilon}$ for all $k\in\mathbb{N}$.
		
		Recall the definition of $\theta_w$, $d_u$, $d_v$ and $v^*_{n,\ell}=\sigma_{\ell}(W^*_{\ell}v^*_{n,\ell-1}+b^*_{\ell})$ for all $\ell>\bar{\ell}$ and $n\in[N]$, we obtain  that $\|(d^{(k)}_v)_{n,\ell}\|_1\leq \theta_w\|(d^{(k)}_v)_{n,\ell-1}\|_1$ and $\|(d^{(k)}_u)_{n,\ell}\|_1\leq \theta_w\|(d^{(k)}_v)_{n,\ell-1}\|_1$ for all $k\in\mathbb{N}$, $\ell>\bar{\ell}$ and $n\in[N]$. Hence for all $k\in\mathbb{N}$, we have
		\begin{equation}\label{eq:vv8ineq}
			\begin{aligned}
				\max\left\{\left\|d^{(k)}_v\right\|_1,\left\|d^{(k)}_u\right\|_1\right\}&\leq L\max\left\{\theta_w,1\right\}^{L} \left\|\left(d^{(k)}_v\right)_{n,\bar{\ell}}\right\|_1
				\\&=L\max\{\theta_w,1\}^{L}\sum_{n \in \mathcal{I}_{\bar{\ell}}}\left\|v_{n, \bar{\ell}}^{*}-\sigma\left(u_{n, \bar{\ell}}^{*}\right)\right\|_{1},
			\end{aligned}
		\end{equation}
		where the last equality comes from the definition \eqref{eq:dudvdefine}.
		We also obtain that
			\begin{equation}\label{eq:contradic2}
				\begin{aligned}
					&  \sum_{n=1}^N\sum_{\ell=1}^L \beta_{\ell} e_{N_{\ell}}\zz \left(d^{(k)}_v\right)_{n,\ell}- \frac{1}{\epsilon_k}\sum_{n=1}^N\sum_{\ell=1}^L \beta_{\ell}e_{N_{\ell}}\zz \left(\sigma\left(u_{n,\ell}+\epsilon_k \left(d^{(k)}_u\right)_{n,\ell}\right)-\sigma(u_{n,\ell})\right)
					\\=
					&-\beta_{\bar{\ell}}\sum_{n \in \mathcal{I}_{\bar{\ell}}}\left\|v_{n, \bar{\ell}}^{*}-\sigma\left(u_{n, \bar{\ell}}^{*}\right)\right\|_{1}+\sum_{\ell=\bar{\ell}+1}^L \sum_{n=1}^N \beta_{\ell}\left\|\left(d^{(k)}_v\right)_{n,\ell}\right\|_1\\&\quad- \frac{1}{\epsilon_k}\sum_{n=1}^N\sum_{\ell=\bar{\ell}+1}^L \beta_{\ell}e_{N_{\ell}}\zz \left(\sigma\left(u_{n,\ell}+\epsilon_k \left(d^{(k)}_u\right)_{n,\ell}\right)-\sigma(u_{n,\ell})\right)
					\\
					\leq & -\beta_{\bar{\ell}}\sum_{n \in \mathcal{I}_{\bar{\ell}}}\left\|v_{n, \bar{\ell}}^{*}-\sigma\left(u_{n, \bar{\ell}}^{*}\right)\right\|_{1}+\sum_{\ell=\bar{\ell}+1}^L \sum_{n=1}^N \beta_{\ell}\left(\left\|\left(d^{(k)}_v\right)_{n,\ell}\right\|_1+\left\|\left(d^{(k)}_u\right)_{n,\ell}\right\|_1\right)
					\\\leq &\left(2\sum_{j=\bar{\ell}+1}^L\beta_j \theta_w\max\{\theta_w,1\}^{j-\bar{\ell}-1}-\beta_{\bar{\ell}}\right)\sum_{n \in \mathcal{I}_{\bar{\ell}}}\left\|v_{n, \bar{\ell}}^{*}-\sigma\left(u_{n, \bar{\ell}}^{*}\right)\right\|_{1},
				\end{aligned}
			\end{equation}
			where the first equality comes from $-e_{N_{\bar{\ell}}}\zz (d^{(k)}_v)_{n,\bar{\ell}}
			=\sum_{n \in \mathcal{I}_{\bar{\ell}}}\|v_{n, \bar{\ell}}^{*}-\sigma(u_{n, \bar{\ell}}^{*})\|_{1}$, $(d^{(k)}_u)_{n,\bar{\ell}}=0$ and $(d^{(k)}_v)_{n,{\ell}}=(d^{(k)}_u)_{n,{\ell}}=0$ for all $1\leq \ell<\bar{\ell}$ and $n\in[N]$ by definition \eqref{eq:dudvdefine}, and the last inequality yields from $\|(d^{(k)}_v)_{n,\ell}\|_1\leq \theta_w\|(d^{(k)}_v)_{n,\ell-1}\|_1$ and $\|(d^{(k)}_u)_{n,\ell}\|_1\leq \theta_w\|(d^{(k)}_v)_{n,\ell-1}\|_1$ for all $k\in\mathbb{N}$, $\ell>\bar{\ell}$ and $n\in[N]$.
			
			It then holds that
					\begin{align*}
						& \frac{1}{\epsilon_k}\left(\mathcal{O}\left(w,v+\epsilon_k d^{(k)}_v,u+\epsilon_k d^{(k)}_u\right)-\mathcal{O}(w,v,u)\right)
						\\=&\frac{1}{\epsilon_k}\left(\bar{\mathcal{O}}\left(w,v+\epsilon_k d^{(k)}_v\right)
						-\bar{\mathcal{O}}(w,v)\right)+\sum_{n=1}^N\sum_{\ell=1}^L \beta_{\ell} e_{N_{\ell}}\zz \left(d^{(k)}_v\right)_{n,\ell}\\&\quad- \frac{1}{\epsilon_k}\sum_{n=1}^N\sum_{\ell=1}^L \beta_{\ell}e_{N_{\ell}}\zz \left(\sigma\left(u_{n,\ell}+\epsilon_k \left(d^{(k)}_u\right)_{n,\ell}\right)-\sigma(u_{n,\ell})\right)\\
						\\
						\leq & \left(LL_{\bar{\mathcal{O}}}\max\{\theta_w,1\}^L+2\sum_{j=\bar{\ell}+1}^L\beta_j \theta_w\max\{\theta_w,1\}^{j-\bar{\ell}-1}-\beta_{\bar{\ell}}\right)\sum_{n \in \mathcal{I}_{\bar{\ell}}}\left\|v_{n, \bar{\ell}}^{*}-\sigma\left(u_{n, \bar{\ell}}^{*}\right)\right\|_{1}<0,
					\end{align*}
				where the equality comes from the definitions \eqref{eq:dudvdefine}, 
				and the first inequality yields from {the inequalities} \eqref{eq:vv8ineq}, \eqref{eq:contradic2} 
				and $\bar{\mathcal{O}}(w,v)<\mathcal{O}(w,v,u)<\theta$.

		Together with the relationships \eqref{eq:oclar} and \eqref{eq:cstadudv}, there is a contradiction.
		We then conclude that $v^*_{n,\bar{\ell}}=\sigma(u^*_{n,\bar{\ell}})$ for all $n\in[N]$. The proof is completed by mathematical induction.
	\end{proof}
	
	We next present the main theorem illustrating the fact that problems \eqref{eq:dnn33} and \eqref{eq:dnn} sharing the same global and local minimizers.
	
	\begin{theorem}\label{thm:exactglo}
Let the penalty parameter $\beta$ satisfy \eqref{eq:betacondi}.		 Then the following statements hold.
		
		(a) \((w^*,b^*,v^*,u^*)\) is a global minimizer of problem~\eqref{eq:dnn33}
		if and only if \((w^*,b^*,v^*,u^*)\) is a global minimizer of problem \eqref{eq:dnn}.
		
		
		(b) If \((w^*,b^*,v^*,u^*)\in\left\{(w,b,v,u) \in \Omega_2: \mathcal{O}(w,v,u) < \theta\right\}\) is a local minimizer of problem~\eqref{eq:dnn33}, then
		\((w^*,b^*,v^*,u^*)\) is also  a local minimizer of problem \eqref{eq:dnn}.
		
		(c) \((w^*,b^*,v^*,u^*)\in\left\{(w,b,v,u) \in \Omega_1: \mathcal{O}(w,v,u) < \theta\right\}\) is a local minimizer of problem~\eqref{eq:dnn} if and only if
		\((w^*,b^*,v^*,u^*)\) is  a local minimizer of problem \eqref{eq:dnn33}.
	\end{theorem}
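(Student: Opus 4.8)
The plan is to build everything on two elementary structural facts. First, $\Omega_1\subseteq\Omega_2$: if $v=\sigma(u)=\max\{u,\alpha u\}$ then $v\ge u$ and $v\ge\alpha u$, so the inequality constraints of \eqref{eq:dnn33} hold, while the equality $u=\Psi(v)w+Ab$ is common to both problems. Second, on $\Omega_2$ one always has $v\ge\sigma(u)$ componentwise, hence $\beta\zz(v-\sigma(u))\ge 0$ (as $\beta_\ell>0$) and therefore $\mathcal{O}(w,v,u)\ge\bar{\mathcal{O}}(w,v)$, with equality exactly when $v=\sigma(u)$, i.e.\ when $(w,b,v,u)\in\Omega_1$; on $\Omega_1$ the two objectives coincide. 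Thus every comparison reduces to matching $\bar{\mathcal{O}}$ over $\Omega_1$ against $\mathcal{O}$ over $\Omega_2$, and the role of Lemma~\ref{lem:lsta} is to force the relevant minimizers into $\Omega_1$.

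For part (a) I would first observe that the origin lies in $\Omega_2$ with $\mathcal{O}(0,0,0)=\frac{1}{N}\|Y\|_F^2<\theta$, so the optimal value of \eqref{eq:dnn33} is strictly below $\theta$ and every global minimizer belongs to $\{(w,b,v,u)\in\Omega_2:\mathcal{O}(w,v,u)<\theta\}$. Since a global minimizer is in particular a limiting stationary point by the chain \eqref{eq:relasta}, Lemma~\ref{lem:lsta} places it in $\Omega_1$; combined with $\mathcal{O}=\bar{\mathcal{O}}$ on $\Omega_1$ and $\mathcal{O}\ge\bar{\mathcal{O}}$ on $\Omega_2\supseteq\Omega_1$, this shows it minimizes $\bar{\mathcal{O}}$ over $\Omega_1$, hence solves \eqref{eq:dnn}. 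The converse then follows by matching optimal values: Theorem~\ref{thm:nonem} guarantees a global minimizer of \eqref{eq:dnn33} exists, and the forward argument shows the two optimal values coincide; any global minimizer of \eqref{eq:dnn}, lying in $\Omega_1\subseteq\Omega_2$ with $\mathcal{O}=\bar{\mathcal{O}}$ there, attains this common value and is therefore a global minimizer of \eqref{eq:dnn33}.

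For part (b), a local minimizer of \eqref{eq:dnn33} in the strict sublevel set is again a limiting stationary point by \eqref{eq:relasta}, so Lemma~\ref{lem:lsta} puts it in $\Omega_1$; restricting its defining neighborhood inequality to the smaller set $\Omega_1$ and using $\mathcal{O}=\bar{\mathcal{O}}$ there gives $\bar{\mathcal{O}}(w,v)\ge\bar{\mathcal{O}}(w^*,v^*)$ for all nearby feasible points of \eqref{eq:dnn}, i.e.\ a local minimizer of \eqref{eq:dnn}. Part (c) then reuses (b) verbatim for its ``if'' implication, since a point of $\Omega_1$ with $\mathcal{O}<\theta$ that is a local minimizer of \eqref{eq:dnn33} is exactly the situation covered by (b).

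The genuinely new content, and the step I expect to be the main obstacle, is the ``only if'' implication of (c), which requires a \emph{local} exact-penalty estimate rather than a stationarity argument. Given a nearby point $(w,b,v,u)\in\Omega_2$, I would construct its forward-propagated projection $(w,b,\tilde v,\tilde u)\in\Omega_1$ by keeping $w,b$ and setting $\tilde u_{n,\ell}=W_\ell\tilde v_{n,\ell-1}+b_\ell$, $\tilde v_{n,\ell}=\sigma(\tilde u_{n,\ell})$ recursively from $\tilde v_{n,0}=x_n$; continuity of this map together with $(w^*,b^*,v^*,u^*)\in\Omega_1$ keeps the projection close to $(w^*,b^*,v^*,u^*)$, so local minimality of \eqref{eq:dnn} yields $\bar{\mathcal{O}}(w,\tilde v)\ge\bar{\mathcal{O}}(w^*,v^*)$. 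It then suffices to prove $\mathcal{O}(w,v,u)\ge\bar{\mathcal{O}}(w,\tilde v)$, i.e.\ that $\beta\zz(v-\sigma(u))$ dominates $\bar{\mathcal{O}}(w,\tilde v)-\bar{\mathcal{O}}(w,v)$. The hard quantitative part is the comparison: using that $\sigma$ is $1$-Lipschitz and $\|w\|\le\theta_w$ on $\Omega_\theta$, I would establish the layerwise recursion $\|v_{n,\ell}-\tilde v_{n,\ell}\|_1\le\|v_{n,\ell}-\sigma(u_{n,\ell})\|_1+\theta_w\|v_{n,\ell-1}-\tilde v_{n,\ell-1}\|_1$, unroll it to bound $\|v-\tilde v\|_1$ by $\max\{\theta_w,1\}^{L}$ times the total feasibility residual $\sum_{n,\ell}\|v_{n,\ell}-\sigma(u_{n,\ell})\|_1$, and then combine the Lipschitz bound on $\bar{\mathcal{O}}$ with the penalty condition \eqref{eq:betacondi}. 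This is precisely the estimate already carried out in the proof of Lemma~\ref{lem:lsta} (inequalities \eqref{eq:vv8ineq}--\eqref{eq:contradic2}), now applied to function-value differences rather than directional derivatives; the layer-indexed lower bound on $\beta_\ell$ in \eqref{eq:betacondi} is exactly what makes the residual-dominating inequality hold, contradicting any assumed strict decrease of $\mathcal{O}$ and completing the argument.
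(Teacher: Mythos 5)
Your proposal is correct, and for parts (a), (b) and the ``if'' half of (c) it follows the paper's own route: minimizer $\Rightarrow$ limiting stationary point (the paper invokes Lemma~\ref{lem:mfcqsys} here, you invoke \eqref{eq:relasta}; either is fine since MFCQ holds) $\Rightarrow$ membership in $\Omega_1$ by Lemma~\ref{lem:lsta}, followed by the comparison $\mathcal{O}=\bar{\mathcal{O}}$ on $\Omega_1$ and $\mathcal{O}\ge\bar{\mathcal{O}}$ on $\Omega_2$, plus Theorem~\ref{thm:nonem} and value-matching for the converse of (a). Where you genuinely diverge is the ``only if'' direction of (c): the paper disposes of it with the single sentence ``by using a similar method as that in the proof of (a) and (b)'', whereas the stationarity-based method of (a)--(b) does not transfer directly (localizing \eqref{eq:dnn33} to a ball and extracting a stationary point runs into trouble when the localized minimizer sits on the boundary of the ball). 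Your explicit local exact-penalty estimate --- forward-propagate $(w,b)$ to get $(\tilde v,\tilde u)\in\Omega_1$ near $(v^*,u^*)$, unroll the layerwise recursion $\|v_{n,\ell}-\tilde v_{n,\ell}\|_1\le\|v_{n,\ell}-\sigma(u_{n,\ell})\|_1+\theta_w\|v_{n,\ell-1}-\tilde v_{n,\ell-1}\|_1$, and use that $\beta\zz(v-\sigma(u))=\sum_{n,\ell}\beta_\ell\|v_{n,\ell}-\sigma(u_{n,\ell})\|_1$ dominates $L_{\bar{\mathcal{O}}}\|v-\tilde v\|_1$ under \eqref{eq:betacondi} --- is exactly the right quantitative argument, reusing the machinery of \eqref{eq:vv8ineq}--\eqref{eq:contradic2} at the level of function values rather than generalized directional derivatives. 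This buys a self-contained proof of the one implication the paper leaves implicit; the only loose end, which you correctly flag, is ensuring the Lipschitz constant $L_{\bar{\mathcal{O}}}$ applies on the segment between $(w,v)$ and $(w,\tilde v)$, which holds for a sufficiently small neighborhood of $(w^*,b^*,v^*,u^*)$ because $\mathcal{O}(w^*,v^*,u^*)<\theta$ and both points converge to it.
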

	
	\begin{proof}
		(a) If $(w^*,b^*,v^*,u^*)$ is a global minimizer of problem \eqref{eq:dnn33}, we obtain that $\bar{\mathcal{O}}(w^*,v^*)<\theta$ by $\bar{\mathcal{O}}(0,0)=\frac{1}{N}\|Y\|_F^2<\theta$. Lemma \ref{lem:mfcqsys} yields that $(w^*,b^*,v^*,u^*)$ is a limiting stationary point of problem \eqref{eq:dnn33}. From Lemma \ref{lem:lsta}, we have $(w^*,b^*,v^*,u^*)\in\Omega_1$. 
		Together with {the inclusion} $\Omega_1\subset \Omega_2$, we know that $(w^*,b^*,v^*,u^*)\in\Omega_1$ must be a global minimizer of problem \eqref{eq:dnn}.
		
		Conversely, suppose that $(\bar{w},\bar{b},\bar{v},\bar{u})$ is a global minimizer of problem \eqref{eq:dnn33}. 
		From what we have proved, it holds that $\bar{\mathcal{O}}(\bar{w},\bar{v})<\theta$, $(\bar{w},\bar{b},\bar{v},\bar{u})\in\Omega_1$ and
		\begin{equation}\label{eq:globalexact1}
			\min_{(w,b,v,u)\in\Omega_2} {\mathcal{O}}(w,v,u)={\mathcal{O}}(\bar{w},\bar{v},\bar{u})=\bar{\mathcal{O}}(\bar{w},\bar{v})=\min_{(w,b,v,u)\in\Omega_1} \bar{\mathcal{O}}(w,v).
		\end{equation}
		
		Since $(w^*,b^*,v^*,u^*)\in\Omega_1$ is a global minimizer of problem \eqref{eq:dnn}, we have
		\begin{equation}\label{eq:globalexact2}
			\min_{(w,b,v,u)\in\Omega_1} \bar{\mathcal{O}}(w,v)=\bar{\mathcal{O}}(w^*,v^*)=\mathcal{O}(w^*,v^*,u^*).
		\end{equation}
		Together with {the facts} \eqref{eq:globalexact1} and $(w^*,b^*,v^*,u^*)\in\Omega_1\subset\Omega_2$, we have $(w^*,b^*,v^*,u^*)$ is a global minimizer of problem \eqref{eq:dnn33}.
		
		(b) 
		Lemma \ref{lem:mfcqsys} yields that $(w^*,b^*,v^*,u^*)$ is a limiting stationary point of problem \eqref{eq:dnn33}. From Lemma \ref{lem:lsta}, it holds that $(w^*,b^*,v^*,u^*)\in\Omega_1$. 
		Together with {the inclusion} $\Omega_1\subset \Omega_2$, $(w^*,b^*,v^*,u^*)\in\Omega_1$ must be a local minimizer of problem \eqref{eq:dnn}.
		
		(c) By using a similar method as that in the proof of (a) and (b), we complete the statement (c).
	\end{proof}

	
	
	Since $\mathcal{R}_1$ and $\mathcal{R}_2$ are directionally differentiable, 
	we can show that if \((\bar{w},\bar{b},\bar{v},\bar{u})\in \{(w,b,v,u)\in\Omega_2:\mathcal{O}(w,b,v,u)<\theta\}\) is a d-stationary point of problem~\eqref{eq:dnn33},
	then \((\bar{w},\bar{b},\bar{v},\bar{u})\) is also a d-stationary point of problem~\eqref{eq:dnn} by using Lemma \ref{lem:lsta} and ideas from \cite[Theorem 2.1]{cui2020multicomposite} and \cite[Theorem 2.5]{liu2021auto}. However, computing a d-stationary point is difficult, we will consider a limiting  stationary point of problem \eqref{eq:dnn33}, and show that it is an MPCC W-stationary point of problem \eqref{eq:dnn}.

	\begin{theorem}\label{thm:lsta2}
		Let the penalty parameter $\beta$ satisfy \eqref{eq:betacondi}.
		If \((w^*,b^*,v^*,u^*)\in \{(w,b,v,u) \in \Omega_2: \mathcal{O}(w,v,u) < \theta\}\) is a limiting stationary point of problem~\eqref{eq:dnn33},
		then \((w^*,b^*,v^*,u^*)\) is an MPCC W-stationary point of problem~\eqref{eq:dnn}.
	\end{theorem}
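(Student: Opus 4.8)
The plan is to start from the KKT characterization of the limiting stationary point and then reparametrize the multipliers so as to match the MPCC W-stationary system of \eqref{eq:dnn}. Since $(w^*,b^*,v^*,u^*)$ lies in $\{(w,b,v,u)\in\Omega_2:\mathcal{O}(w,v,u)<\theta\}$ and is a limiting stationary point of \eqref{eq:dnn33}, Theorem \ref{thm:cone1} immediately supplies multipliers $\mu=[(\mu^1)\zz,(\mu^2)\zz]\zz$ with $\mu^1,\mu^2\in\R^m_+$ and $\xi\in\R^m$ satisfying \eqref{eq:kktdnn2-1}--\eqref{eq:kktdnn2-4}. The complementarity condition $\mu\zz\mathcal{C}(v^*,u^*)=0$ together with $\mathcal{C}(v^*,u^*)\le 0$ and $\mu\ge 0$ forces the componentwise relations $\mu^1_i(u^*_i-v^*_i)=0$ and $\mu^2_i(\alpha u^*_i-v^*_i)=0$ for every $i$. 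Moreover, Lemma \ref{lem:lsta} guarantees that $(w^*,b^*,v^*,u^*)\in\Omega_1$, i.e., $v^*=\sigma(u^*)$, so the equality constraints of \eqref{eq:dnn} are already met and only the stationarity multipliers must be produced.

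Next I would unpack the single nonsmooth piece $\partial_u(-\beta\zz\sigma(u^*))$ appearing in \eqref{eq:kktdnn2-3}. Because $v^*=\sigma(u^*)$, each coordinate $u^*_i$ falls into one of three regimes, and the element $g_i$ of $\partial_u(-\beta\zz\sigma(u^*))$ selected by \eqref{eq:kktdnn2-3} equals $-\beta_i$ when $u^*_i>0$, equals $-\alpha\beta_i$ when $u^*_i<0$, and equals $-\gamma_i$ with $\gamma_i\in[\alpha\beta_i,\beta_i]$ when $u^*_i=0$. Writing $\gamma_i=\beta_i\big(\alpha+s_i(1-\alpha)\big)$ with selection parameter $s_i\in[0,1]$ (and $s_i=1$ if $u^*_i>0$, $s_i=0$ if $u^*_i<0$), I would then define the MPCC multipliers
\begin{equation*}
	\hat{\mu}^1=\mu^1+\frac{\alpha\beta-\gamma}{1-\alpha},\qquad \hat{\mu}^2=\mu^2+\frac{\gamma-\beta}{1-\alpha},
\end{equation*}
which read coordinatewise as $\hat{\mu}^1_i=\mu^1_i-s_i\beta_i$ and $\hat{\mu}^2_i=\mu^2_i-(1-s_i)\beta_i$. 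This choice is tailored so that the two linear identities become tautologies: $\hat{\mu}^1+\hat{\mu}^2=\mu^1+\mu^2-\beta$ turns \eqref{eq:kktdnn2-2} into \eqref{eq:mpcckktdnn-2} (the common term $-\nabla_v\xi\zz\Psi(v^*)w^*$ cancels), while $\hat{\mu}^1+\alpha\hat{\mu}^2=\mu^1+\alpha\mu^2-\gamma=\mu^1+\alpha\mu^2+g$ turns \eqref{eq:kktdnn2-3} into \eqref{eq:mpcckktdnn-3}. The conditions \eqref{eq:mpcckktdnn-1} are inherited verbatim from \eqref{eq:kktdnn2-1}.

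It remains to verify the complementarity system \eqref{eq:mpcckktdnn-5}, and this is where I expect the delicate bookkeeping to lie. Using $v^*=\sigma(u^*)$ one checks that $v^*_i-u^*_i=0$ exactly on $\{u^*_i\ge 0\}$ and $v^*_i-\alpha u^*_i=0$ exactly on $\{u^*_i\le 0\}$, so the only coordinates requiring work are those where a factor is nonzero. On $\{u^*_i<0\}$ the relation $\mu^1_i(u^*_i-v^*_i)=0$ with $u^*_i-v^*_i=(1-\alpha)u^*_i\ne 0$ forces $\mu^1_i=0$, hence $\hat{\mu}^1_i=\mu^1_i=0$ (here $s_i=0$); symmetrically, on $\{u^*_i>0\}$ the relation $\mu^2_i(\alpha u^*_i-v^*_i)=0$ forces $\mu^2_i=0$, hence $\hat{\mu}^2_i=\mu^2_i=0$ (here $s_i=1$); on $\{u^*_i=0\}$ both factors vanish so no constraint is imposed. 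Combining these gives $(\hat{\mu}^1)\zz(v^*-u^*)=0$ and $(\hat{\mu}^2)\zz(v^*-\alpha u^*)=0$. The main obstacle is thus not any single estimate but the careful alignment of the subgradient selection $\gamma$ (equivalently $s$) on the stratum $\{u^*_i=0\}$ with the componentwise complementarity inherited from \eqref{eq:dnn33}, ensuring that the constructed $\hat{\mu}^1,\hat{\mu}^2$ simultaneously satisfy the algebraic identities \eqref{eq:mpcckktdnn-2}--\eqref{eq:mpcckktdnn-3} and the complementarity \eqref{eq:mpcckktdnn-5}. With this in place, $(w^*,b^*,v^*,u^*)$ together with $\hat{\mu}^1,\hat{\mu}^2,\xi$ is an MPCC W-stationary point of problem \eqref{eq:dnn}.
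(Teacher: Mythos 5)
Your proposal is correct and follows essentially the same route as the paper: invoke Theorem \ref{thm:cone1} and Lemma \ref{lem:lsta}, then shift the KKT multipliers by the selected subgradient of $-\beta\zz\sigma(u^*)$ (your uniform parametrization $\hat{\mu}^1=\mu^1-s\circ\beta$, $\hat{\mu}^2=\mu^2-(e_m-s)\circ\beta$ reproduces exactly the paper's three-case construction of $\bar{\mu}^1,\bar{\mu}^2$). The only cosmetic difference is that at coordinates with $u^*_i=0$ you allow $\gamma_i$ to range over the full interval $[\alpha\beta_i,\beta_i]$ (the Clarke hull), whereas the limiting subdifferential used in \eqref{eq:kktdnn2-3} consists only of the two endpoints, i.e.\ $s_i\in\{0,1\}$; this is a harmless over-inclusion since your argument covers those cases.
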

	
	\begin{proof}
		Since it holds that $\mathcal{O}(w^*,v^*,u^*) < \theta$, Lemma \ref{lem:lsta} yields that \((w^*,b^*,v^*,u^*)\in\Omega_1\).
		
		Form Theorem \ref{thm:cone1} and \((w^*,b^*,v^*,u^*)\) being a limiting stationary point of problem~\eqref{eq:dnn33}, there exist vectors $\mu\in\R_+^{2m}$ and $\xi\in\R^{m}$ such that \eqref{eq:kktdnn2-1}-- \eqref{eq:kktdnn2-4} hold.
		
		Let $\mu=[\left(\mu^1\right)\zz,\left(\mu^2\right)\zz]\zz$ with $\mu^1,\mu^2\in\R_+^{m}$. Recall the definition of $\mathcal{C}$, it holds that
		\begin{equation}\label{eq:lsta2-eq3}
			\,\nabla_v \mu\zz \mathcal{C}( v^*,u^*)= -\left({\mu}^1+{\mu}^2\right), \,\nabla_u \mu\zz \mathcal{C}( v^*,u^*)={\mu}^1+\alpha{\mu}^2.
		\end{equation}
		
		Then, we obtain from \eqref{eq:kktdnn2-2}--\eqref{eq:kktdnn2-4} that
		\begin{equation}\label{eq:lsta2-eq0}
			\begin{aligned}
				&0= \nabla_v \bar{\mathcal{O}}(w^*,v^*)+\beta 	-\left({\mu}^1+{\mu}^2\right)+\nabla_v \xi\zz (u^*- \Psi(v^*)w^*),\\
				&0\in \partial_u (-\beta \zz \sigma(u^*)) +{\mu}^1+\alpha{\mu}^2+ \xi,\\
				&\left({\mu}^1\right)\zz (u^*-v^*)=0,\, \left({\mu}^2\right)\zz (\alpha u^*-v^*)=0,\, 		{\mu}^1\geq 0,\, {\mu}^2\geq 0.
			\end{aligned}
		\end{equation}
		
		Now, we prove that there exist $\bar{\mu}^1$ and $\bar{\mu}^2$ such that
		\begin{equation}\label{eq:mu1mu2}
			\begin{aligned}
				&0= \bar{\mu}^1+\alpha\bar{\mu}^2+ \xi, \,\bar{\mu}^1+\bar{\mu}^2={\mu}^1+{\mu}^2-\beta ,\,\left(\bar{\mu}^{1}\right)^{\top}\left(v^{*}-u^{*}\right)=0,\left(\bar{\mu}^{2}\right)^{\top}\left(v^{*}-\alpha u^{*}\right)=0
			\end{aligned}
		\end{equation}
		by analyzing the following cases for all $i\in[m]$.
		
		Case (i): if $u_i^*< 0$, the relation \eqref{eq:lsta2-eq0} together with the definition of $\sigma$ yield $v_i^*=\alpha u_i^*$, $\mu_i^1=0$ and
		$$
		\partial (-(\beta)_i \sigma(u_i^*))+{\mu}_i^1+\alpha{\mu}_i^2=\left\{{\mu}_i^1+\alpha \left({\mu}_i^2-(\beta)_i\right)\right\},
		$$
		where $(\beta)_i$ denotes the $i$-th element of $\beta$.
		In this case, we have ${\mu}_i^1 (v_i^*- u_i^*)=0$ and $({\mu}_i^2-(\beta)_i) (v_i^*- \alpha u_i^*)=0$. Let $\bar{\mu}_i^1=\mu_i^1=0, \bar{\mu}_i^2=\mu_i^2-(\beta)_i$.
		
		Case (ii): if $u_i^*> 0$, the relation \eqref{eq:lsta2-eq0} together with the definition of $\sigma$ yield $v_i^*= u_i^*$, $\mu_i^2=0$ and
		$$
		\partial (-(\beta)_i \sigma(u_i^*))+{\mu}_i^1+\alpha{\mu}_i^2=\left\{\left({\mu}_i^1-(\beta)_i\right)  +\alpha{\mu}_i^2\right\}.
		$$
		In this case, we have (${\mu}_i^1-(\beta)_i) (v_i^*- u_i^*)=0$ and ${\mu}_i^2 (v_i^*- \alpha u_i^*)=0$. Let $\bar{\mu}_i^1=\mu_i^1-(\beta)_i, \bar{\mu}_i^2=\mu_i^2=0$.
		
		Case (iii): if $u_i^*= 0$, we have $v_i^*=\sigma(u_i^*)=0$ and
		\begin{equation}\label{eq:lsta2-eq2}
			\begin{aligned}
				&\partial (-(\beta)_i \sigma(u_i^*))+{\mu}_i^1+\alpha{\mu}_i^2=\left\{ \left({\mu}_i^1-(\beta)_i\right)+\alpha {\mu}_i^2, {\mu}_i^1+\alpha \left({\mu}_i^2-(\beta)_i\right)\right\}.
			\end{aligned}
		\end{equation}
		In this case, we have $v_i^*- u_i^*=0$, $ v_i^*- \alpha u_i^*=0$. It then holds that either $0=({\mu}_i^1-(\beta)_i)+\alpha {\mu}_i^2+\xi$ or $0={\mu}_i^1+\alpha ({\mu}_i^2-(\beta)_i)+\xi$ by \eqref{eq:lsta2-eq0}. If $0={\mu}_i^1+\alpha ({\mu}_i^2-(\beta)_i)+\xi$, let $\bar{\mu}_i^1=\mu_i^1\geq 0, \bar{\mu}_i^2=\mu_i^2-(\beta)_i$. Otherwise, let
		$\bar{\mu}_i^2=\mu_i^2 \geq 0, \bar{\mu}_i^1=\mu_i^1-(\beta)_i$.
		
		Combining the above three cases, we obtain \eqref{eq:mu1mu2}. {Recall} the relation \eqref{eq:lsta2-eq0}, we then derive \eqref{eq:mpcckktdnn-1}-- \eqref{eq:mpcckktdnn-5} with $\bar{\mu}^1$, $\bar{\mu}^2$ instead of $\mu^1$ and $\mu^2$, respectively. The proof is then completed.
	\end{proof}
	
	\begin{remark}
		Notice that the conditions \eqref{eq:betacondi} for the penalty parameter $\beta$
		are recursively define from $L$ to $1$. This coincides the intuition that the inner layer should have larger penalty parameter than the outer one to avoid error accumulation.
	\end{remark}
	
	\begin{remark}
		By simple calculation, we have
		\begin{equation*}
			\bar{\mu}_i^1\bar{\mu}_i^2=\left\{
			\begin{aligned}
				{\mu}^1_i \left({\mu}^2_i-(\beta)_i\right) \text{ or } {\mu}^2_i \left({\mu}^1_i-(\beta)_i\right), &\text{ if }u_i^*=0,\\
				0,\hspace{1.7cm} &\text{ if }u_i^*\neq 0.
			\end{aligned}\right.
		\end{equation*}
		If $\mu^1_i=\mu^2_i=0$ for all $i\in\{i:u_i^*=0\}$, then \((w^*,b^*,v^*,u^*)\) is an MPCC C-stationary point of problem~\eqref{eq:dnn}.
	\end{remark}
	
	\begin{remark}\label{rem:leReLU}
		Our theoretical results can also be extended to ReLU network with $\alpha=0$. Notice that the solution set of problem \eqref{eq:dnn33} with $\alpha=0$ is unbounded (see a counterexample given by \cite{liu2021auto}), we introduce a constrained set $\Omega_b$, and minimize the objective function of problem \eqref{eq:dnn33} over $\Omega_2 \cap \{(w,b,v,u):b\in\Omega_b\}$, where
		$$\Omega_b:=\left\{b : b\ge -e_{\overline{N}}\overline{N}\theta_w\theta_v\right\}.$$ We call the resulted problem $(\mathrm{PP}_{\mathrm{b}})$.
		By using a similar method as that in the proof of Theorems \ref{thm:nonem} and \ref{thm:exactglo}, we can prove that the solution set of problem $(\mathrm{PP}_{\mathrm{b}})$ is nonempty and bounded; the global (local) minimizer of problem $(\mathrm{PP}_{\mathrm{b}})$ is a global (local) minimizer of problem \eqref{eq:dnn}. However, a limiting stationary point of problem $(\mathrm{PP}_{\mathrm{b}})$ may not be a MPCC W-stationary point of \eqref{eq:dnn} with $\alpha=0$. Specifically, if $(w^*,b^*,v^*,u^*)$ is a limiting stationary point of problem $(\mathrm{PP}_{\mathrm{b}})$, then \eqref{eq:mpcckktdnn-1}--\eqref{eq:mpcckktdnn-5} hold with $0\leq A\zz \xi$ instead of $0= A\zz \xi$.
		
		Furthermore, our numerical algorithm for solving problem \eqref{eq:dnn33}, which will be proposed in Section \ref{sec:alm}, can also be applied to solve problem $(\mathrm{PP}_{\mathrm{b}})$.
	\end{remark}
	
	We end this section by summarizing our results for the relationship of problems \eqref{eq:dnn} and \eqref{eq:dnn33} in the following diagram, where the function value at the related points is less than $\theta$.
	\begin{equation*}\label{eq:relation}
		\small
		\boxed{\begin{aligned}
				&\text{(PP)}: \,\,\text{global (local) minimizer}\, \hspace{2.4cm} \text{ limiting stationary point } \Leftrightarrow \text{ KKT point }\\
				&\hspace{2.5cm}\Downarrow\Uparrow\hspace{2.0cm}\text{\scriptsize conditions in Remark 12}\Downarrow\hspace{1.7cm}\Downarrow\\
				&\text{(P)}:  \,\quad\text{global (local) minimizer}\,  \hspace{0.2cm}\text{ MPCC C-stationary point }\hspace{0.2cm}\text{ MPCC W-stationary point }
		\end{aligned}}
	\end{equation*}

	\section{An Inexact Augmented Lagrangian Method with the Alternating Minimization (IALAM)}\label{sec:alm}

	Problem \eqref{eq:dnn33} is to minimize a nonsmooth nonconvex function subject to linear and bilinear constraints. 
	By exploring the structure of problem \eqref{eq:dnn33}, we propose a variation of the inexact augmented
	Lagrangian (IALM) framework in Subsection \ref{sec:ialm}. Then, we present an alternating minimization algorithm to
	solve the augmented Lagrangian
	subproblem in Subsection \ref{sec:am}. Combining these two parts, we call
	our new algorithm IALAM.	
	In Subsections \ref{sec:almconver} and \ref{sec:amconver}, we prove that { any iterate sequence generated by IALAM has at least one accumulation point and any accumulation point is a KKT point of problem \eqref{eq:dnn33}, which is a MPCC W-stationary point of problem \eqref{eq:dnn} according to Theorem \ref{thm:lsta2}. }
	
	\subsection{The Algorithm Framework}\label{sec:ialm}
	By penalizing the equality constraint of problem \eqref{eq:dnn33}, we can obtain
	its augmented Lagrangian (AL) function as follows
	\begin{equation}\label{eq:lagran2}
		\begin{aligned}
			\mathcal{L}_{\rho}(w,b,v,u;\xi):=&{\mathcal{O}}(w,v,u)+\left\langle \xi,u-\Psi(v)w-Ab \right\rangle+ \frac{\rho}{2}\left\|u-\Psi(v)w-Ab\right\|^2, \\
		\end{aligned}
	\end{equation}
	where $\rho>0$ is the penalty parameter and $$\xi:=\left(\xi_{1,1}^{\top}, \xi_{2,1}^{\top}, \ldots, \xi_{N, 1}^{\top}, \xi_{1,2}^{\top}, \ldots, \xi_{N, L}^{\top}\right)^{\top}$$ is the Lagrangian multiplier associate with $u=\Psi(v)w-Ab$, $\xi_{n,\ell}\in\R^{N_{\ell}}$ for all $n\in[N]$ and $\ell\in[L]$. Recall the definition of $\Psi(v)$ and $A$, it holds that
	$$\left\langle \xi,u-\Psi(v)w-Ab \right\rangle=\sum_{\ell=1}^L\sum_{n=1}^{N}\left\langle \xi_{n,\ell},u_{n,\ell}-W_{\ell}v_{n,\ell-1}-b_{\ell} \right\rangle.$$
	
	In {the framework of any} augmented Lagrangian based approach,
	it requires to solve the following subproblem with the dual variables fixed at each iteration to update the prime variables
	\begin{equation}\label{eq:almsub}
		\begin{aligned}
			\min_{(w,b,v,u)\in\Omega_3}\,&\,\mathcal{L}_{\rho}(w,b,v,u;\xi),\\
		\end{aligned}
	\end{equation}
	where $\Omega_3:=\{(w, b,v,u): w\in\R^{\widetilde{N}}, b\in\R^{\overline{N}}, \mathcal{C}(v,u)\leq 0\}$ and $\mathcal{C}(v,u)$ is defined in \eqref{eq:omega3}. 
	We denote $(w^{(k)},b^{(k)},v^{(k)},u^{(k)},\xi^{(k)})$ as the $k$-th iterate tuple. At the $k$-th iteration,
	we inexactly solve \eqref{eq:almsub} with $\rho=\rho^{(k-1)}$ and 
	$\xi=\xi^{(k-1)}$ {to} obtain an approximate solution $(w^{(k)},b^{(k)},v^{(k)},u^{(k)})\in\Omega_3$ satisfying the following {two} conditions,
	\begin{equation}\label{eq:almsubcondi1}
		\mathcal{L}_{\rho^{(k-1)}}\left(w^{(k)},b^{(k)},v^{(k)},u^{(k)};\xi^{(k-1)}\right)
		<\theta,
	\end{equation}
	and
	\begin{equation}\label{eq:almsubcondi2}
		\operatorname{dist}\left(0, \partial \mathcal{L}_{\rho^{(k-1)}}\left(w^{(k)},b^{(k)},v^{(k)},u^{(k)};\xi^{(k-1)}\right)+\mathcal{N}_{\Omega_3}\left(w^{(k)},b^{(k)},v^{(k)},u^{(k)}\right)\right) \leq \epsilon_{k}.
	\end{equation}
	
	We {describe the} IALM framework {with inexact criteria \eqref{eq:almsubcondi1} and \eqref{eq:almsubcondi2}} in Algorithm \ref{alg:alm}. The definitions of $\theta$ and $\Omega_{\theta}$ can be found in Subsection \ref{sec:modelana1}.	
	
	\begin{algorithm}[ht]
		\caption{The inexact augmented Lagrangian method for solving problem \eqref{eq:dnn33}}\label{alg:alm}
		\begin{algorithmic}
			\State 
			{\textbf{Input:} initial point $(w^{(0)},b^{(0)}, v^{(0)}, u^{(0)})\in\Omega_{\theta}$, parameters $\rho^{(0)}>0$,  $\eta_1, \eta_2, \eta_4 \in(0,1)$, $\eta_3>0$, $\xi^{(0)}\in \R^{m}$, $\gamma\in\mathbb{N}_+$, and $\epsilon_0>0$.
				Set $k:=1$.}
			\While {the stop criterion is not met}
			
			\State \textbf{Step 1:}
			{Solve \eqref{eq:almsub} with $\rho=\rho^{(k-1)}$ and $\xi=\xi^{(k-1)}$
				and obtain $(w^{(k)},b^{(k)},v^{(k)},u^{(k)})\in\Omega_3$ satisfying \eqref{eq:almsubcondi1} and \eqref{eq:almsubcondi2}.}

			\State \textbf{Step 2:} Update the Lagrangian multipliers by
			\begin{equation}\label{eq:xiupdate}
				\xi^{(k)}=\xi^{(k-1)}+\rho^{(k-1)} \left(u^{(k)}-\Psi(v^{(k)})w^{(k)}-Ab^{(k)}\right).
			\end{equation}
			\State \textbf{Step 3:} 
			{If $k\leq \gamma$, set $\rho^{(k)}=\rho^{(k-1)}$ and $\epsilon_{k}=\epsilon_{k-1}$. Else if $k>\gamma$, and
				\begin{equation}\label{eq:rhocondition}
					\left\|u^{(k)}-\Psi(v^{(k)})w^{(k)}-Ab^{(k)}\right\|\leq \eta_1 \max_{t=k-\gamma,\ldots,k-1}\left\|u^{(t)}-\Psi(v^{(t)})w^{(t)}-Ab^{(t)}\right\|,
				\end{equation}
				\quad\,\, then set $\rho^{(k)}=\rho^{(k-1)}$ and $\epsilon_{k}=\sqrt{\eta_1}\epsilon_{k-1}$. Otherwise, set
				\begin{equation}\label{eq:rhoupdate}
					\rho^{(k)}=\max\left\{\rho^{(k-1)}/\eta_2, \left\|\xi^{(k)}\right\|^{1+\eta_3}\right\}\text{ and }\epsilon_{k}=\eta_4\epsilon_{k-1}.
			\end{equation}}
			\quad\,\, Set $k:=k+1$.
			\EndWhile
			\State \textbf{Output:} $(w^{(k)},b^{(k)},v^{(k)},u^{(k)})$.
		\end{algorithmic}
	\end{algorithm}

	\subsection{The Alternating Minimization Algorithm}\label{sec:am}
	
	Subproblem \eqref{eq:almsub} is to minimize a nonsmooth nonconvex function subject to linear constraints. We utilize its block structure and
	propose an alternating minimization algorithm. Before we present the detailed algorithm framework, we introduce how to choose an initial point and update the two blocks. We assume to be at the $k$-th iteration of
	Algorithm \ref{alg:alm}.
	
	{\bf{Initialization.}} Let $(w_{\mathrm{init}}^{(k)},b_{\mathrm{init}}^{(k)},v_{\mathrm{init}}^{(k)},u_{\mathrm{init}}^{(k)})$ be the initial point of the Algorithm \ref{alg:subbcd}, which is updated recursively as follows
	\begin{equation}\label{eq:initbcd}
		\left(w_{\mathrm{init}}^{(k)},b_{\mathrm{init}}^{(k)},v_{\mathrm{init}}^{(k)},u_{\mathrm{init}}^{(k)}\right)=\left\{
		\begin{aligned}
			&\left(w_{\mathrm{init}}^{(k-1)},b_{\mathrm{init}}^{(k-1)},v_{\mathrm{init}}^{(k-1)},u_{\mathrm{init}}^{(k-1)}\right), \text{ if }k>1\text{ and} \\&\hspace{3.8cm}
			\mathcal{L}_{\rho^{(k-1)}}\left(w^{(k-1)},b^{(k-1)},\bar{v},\bar{u};\xi^{(k-1)}\right) \geq \theta,\\
			&\left(w^{(k-1)},b^{(k-1)},\bar{v},\bar{u}\right),\hspace{1.6cm}\text{ otherwise,}
	\end{aligned}\right.
\end{equation}
where $\bar{v}_{n,0}=x_n$, $\bar{u}_{n, \ell}=W_{\ell}^{(k-1)} \bar{v}_{n, \ell-1}+b_{\ell}^{(k-1)}$, and $\bar{v}_{n, \ell}=\sigma\left(\bar{u}_{n, \ell}\right)$ for all $n\in[N]$ and $\ell\in[L]$.
Clearly,  $(w_{\mathrm{init}}^{(k)},b_{\mathrm{init}}^{(k)},v_{\mathrm{init}}^{(k)},u_{\mathrm{init}}^{(k)})$ is a feasible point of problem \eqref{eq:dnn33}
for all $k\in\mathbb{N}_+$ by its definition.

The notations $(w^{(k,\jmath)},b^{(k,\jmath)},v^{(k,\jmath)},u^{(k,\jmath)})$ stands for the $\jmath$-th iterate of the alternating minimization algorithm and the $k$-th iterate of Algorithm \ref{alg:alm}.
For brevity, we drop the superscript $(k-1)$ (and $(k)$) and abuse the notations $\rho, \xi, w^{(\jmath)},b^{(\jmath)},v^{(\jmath)},u^{(\jmath)}$ to denote $\rho^{(k-1)}, \xi^{(k-1)}, w^{(k,\jmath)}, b^{(k,\jmath)}$, $v^{(k,\jmath)}, u^{(k,\jmath)}$, respectively. We assume {to be} at the $j$-th iterate of the alternating minimization algorithm.


\textbf{Update of the $(w,b)$ block.}
Once $(v,u)$ block is fixed at $(v^{(j)},u^{(j)})$.
We compute $(w^{(\jmath+1)},b^{(\jmath+1)})$ by solving the following 
convex problem $\min_{w,b}\mathcal{L}_{\rho}(w,b,v^{(\jmath)},u^{(\jmath)};\xi)$, i.e.,
\begin{equation}\label{eq:bcdsubWb1}
	\begin{aligned}
		\min_{w,b}\mathcal{R}_1(w)+\left\langle \xi,u^{(\jmath)}-\Psi\left(v^{(\jmath)}\right)w-Ab \right\rangle+ \frac{\rho}{2}\left\|u^{(\jmath)}-\Psi\left(v^{(\jmath)}\right)w-Ab\right\|^2.
	\end{aligned}
\end{equation}
This can be solved by some existing methods, for example,
the proximal gradient method.

\textbf{Update of the  $v,u$ block:}
After obtaining $(w^{(\jmath+1)},b^{(\jmath+1)})$, we calculate $(u^{(\jmath+1)},v^{(\jmath+1)})$ in the following way. We define an proximal term $\mathcal{P}(u,v;u^{(\jmath)},v^{(\jmath)}, \tau^{(\jmath)})$ by
\begin{equation}\label{eq:sk}
	\mathcal{P}\left(u,v;u^{(\jmath)},v^{(\jmath)}, \tau^{(\jmath)}\right):=\frac{1}{2}\sum_{n=1}^N\sum_{\ell=2}^{L}\left\|\left(\begin{matrix}
		v_{n,\ell-1}\\ u_{n,\ell}
	\end{matrix}\right)-\left(\begin{matrix}
		v_{n,\ell-1}^{(\jmath)} \\ u_{n,\ell}^{(\jmath)}
	\end{matrix}\right)\right\|_{S_{\ell}^{(\jmath)}}^2+\frac{\tau_1}{2}\sum_{n=1}^N \left\|u_{n,1}-u_{n,1}^{(\jmath)}\right\|^2,
\end{equation}
where $\tau_1>0$ is a given parameter, $\tau^{(\jmath)}:=(\tau_2^{(\jmath)},\ldots,\tau^{(\jmath)}_{L})\zz\in\R^{L-1}$, $\tau_{\ell}^{(\jmath)}$ and matrix $S_{\ell}^{(\jmath)}$ are defined by
\begin{align}
	&\tau_{\ell}^{(\jmath)}:=\rho\left\|\left[-{W}^{(\jmath+1)}_{\ell}\,\,\,\, I_{N_{\ell}}\right]\right\|^2 +\tau_1,\label{eq:taudefine}\\
	&S_{\ell}^{(\jmath)}:=\tau_{\ell}^{(\jmath)} I_{N_{\ell}+N_{\ell-1}}-\rho \left[-{W}^{(\jmath+1)}_{\ell}\,\,\,\, I_{N_{\ell}}\right]\zz\left[-{W}^{(\jmath+1)}_{\ell}\quad I_{N_{\ell}}\right],
\end{align}
	respectively for all $\ell=2,3,\ldots,L$. Clearly, $S_{\ell}^{(\jmath)}\succeq \tau_1 I_{N_{\ell}+N_{\ell-1}}$ is a symmetric positive definite matrix, since $\|[-{W}^{(\jmath+1)}_{\ell}\,\,\,\, I_{N_{\ell}}]\|^2$ is the maximal eigenvalue of $[-{W}^{(\jmath+1)}_{\ell}\,\,\,\, I_{N_{\ell}}]\zz[-{W}^{(\jmath+1)}_{\ell}\,\,\,\, I_{N_{\ell}}]$ for any $\ell=2,3,\ldots,L$.
	Then, we arrive at a linearly constrained problem
	\begin{equation}\label{eq:bcdsub1}
		\begin{aligned}
			\arg\min_{v,u}\, & \, \mathcal{L}_{\rho}\left({w}^{(\jmath+1)},{b}^{(\jmath+1)},v,u;\xi\right)+\mathcal{P}\left(u,v;u^{(\jmath)},v^{(\jmath)}, \tau^{(\jmath)}\right)\\
			\st \, & \, v\geq  u, v\geq  \alpha u.
		\end{aligned}
	\end{equation}
	We can calculate its unique solution $({v}^{(\jmath+1)}, {u}^{(\jmath+1)})$ in the following way.
	
	Notice that
	\begin{align*}
		&\frac{\rho}{2}\left\|u-\Psi(v)w^{(\jmath+1)}-Ab^{(\jmath+1)}\right\|^2+\mathcal{P}\left(u,v;u^{(\jmath)},v^{(\jmath)},\tau^{(\jmath)}\right)
		\\
		=&\frac{1}{2}\sum_{n=1}^N\sum_{\ell=2}^L\left(\rho\left\| u_{n,\ell}-\left({W}^{(\jmath+1)}_{\ell}v_{n,\ell-1}+{b}^{(\jmath+1)}_{\ell}\right)\right\|^2+\left\|\left(\begin{matrix}
			v_{n,\ell-1}\\ u_{n,\ell}
		\end{matrix}\right)-\left(\begin{matrix}
			v_{n,\ell-1}^{(\jmath)} \\ u_{n,\ell}^{(\jmath)}
		\end{matrix}\right)\right\|_{S_{\ell}^{(\jmath)}}^2\right)\\
		&+\frac{\rho}{2}\sum_{n=1}^N\left\| u_{n,1}-\left(W^{(\jmath+1)}_{1}x_n+{b}^{(\jmath+1)}_{1}\right)\right\|^2+\frac{\tau_1}{2}\sum_{n=1}^N \left\|u_{n,1}-u_{n,1}^{(\jmath)}\right\|^2,
	\end{align*}
	where
	\begin{align*}
		&\sum_{n=1}^N\sum_{\ell=2}^L\left(\rho\left\| u_{n,\ell}-\left({W}^{(\jmath+1)}_{\ell}v_{n,\ell-1}+{b}^{(\jmath+1)}_{\ell}\right)\right\|^2+\left\|\left(\begin{matrix}
			v_{n,\ell-1}\\ u_{n,\ell}
		\end{matrix}\right)-\left(\begin{matrix}
			v_{n,\ell-1}^{(\jmath)} \\ u_{n,\ell}^{(\jmath)}
		\end{matrix}\right)\right\|_{S_{\ell}^{(\jmath)}}^2\right)\\
		=&\sum_{n=1}^N\sum_{\ell=2}^L\Bigg(\rho\left\| u^{(\jmath)}_{n,\ell}-\left({W}^{(\jmath+1)}_{\ell}v^{(\jmath)}_{n,\ell-1}+{b}^{(\jmath+1)}_{\ell}\right)\right\|^2+\left\|\left(\begin{matrix}
			v_{n,\ell-1}\\ u_{n,\ell}
		\end{matrix}\right)-\left(\begin{matrix}
			v_{n,\ell-1}^{(\jmath)} \\ u_{n,\ell}^{(\jmath)}
		\end{matrix}\right)\right\|_{S_{\ell}^{(\jmath)}}^2\\
		&+2\rho\left(u^{(\jmath)}_{n,\ell}-\left({W}^{(\jmath+1)}_{\ell}v^{(\jmath)}_{n,\ell-1}+{b}^{(\jmath+1)}_{\ell}\right)\right)\zz\left[-{W}^{(\jmath+1)}_{\ell} \,\,\,\, I_{N_{\ell}}\right]\left(\begin{matrix}
			v_{n,\ell-1}-v^{(\jmath)}_{n,\ell-1}\\ u_{n,\ell}-u_{n,\ell}^{(\jmath)}
		\end{matrix}\right)\\
		&+\rho\left(\begin{matrix}
			v_{n,\ell-1}-v^{(\jmath)}_{n,\ell-1}\\ u_{n,\ell}-u_{n,\ell}^{(\jmath)}
		\end{matrix}\right)\zz \begin{bmatrix}
			({W}^{(\jmath+1)}_{\ell})\zz {W}^{(\jmath+1)}_{\ell} & -({W}^{(\jmath+1)}_{\ell})\zz\\
			-{W}^{(\jmath+1)}_{\ell} & I_{N_{\ell}}
		\end{bmatrix}\left(\begin{matrix}
			v_{n,\ell-1}-v^{(\jmath)}_{n,\ell-1}\\ u_{n,\ell}-u_{n,\ell}^{(\jmath)}
		\end{matrix}\right)
		\Bigg)\\
		=&\sum_{n=1}^N\sum_{\ell=2}^L\Bigg(\rho\left\| u^{(\jmath)}_{n,\ell}-\left(W^{(\jmath+1)}_{\ell}v^{(\jmath)}_{n,\ell-1}+{b}^{(\jmath+1)}_{\ell}\right)\right\|^2+\tau_{\ell}^{(\jmath)} \left(\begin{matrix}
			v_{n,\ell-1}-v^{(\jmath)}_{n,\ell-1}\\ u_{n,\ell}-u_{n,\ell}^{(\jmath)}
		\end{matrix}\right)\zz \left(\begin{matrix}
			v_{n,\ell-1}-v^{(\jmath)}_{n,\ell-1}\\ u_{n,\ell}-u_{n,\ell}^{(\jmath)}
		\end{matrix}\right)\\
		&+2\rho\left(u^{(\jmath)}_{n,\ell}-\left({W}^{(\jmath+1)}_{\ell}v^{(\jmath)}_{n,\ell-1}+{b}^{(\jmath+1)}_{\ell}\right)\right)\zz\left[-{W}^{(\jmath+1)}_{\ell} \,\,\,\, I_{N_{\ell}}\right]\left(\begin{matrix}
			v_{n,\ell-1}-v^{(\jmath)}_{n,\ell-1}\\ u_{n,\ell}-u_{n,\ell}^{(\jmath)}
		\end{matrix}\right)
		\Bigg).
	\end{align*}
	
	Then, the objective function of problem \eqref{eq:bcdsub1} can be simplified as
	\begin{align}
		&\frac{1}{N}\sum_{n=1}^N\|v_{n,L}-y_n\|^2+\lambda_v\|v\|^2+\beta \zz (v-  \sigma(u))+\sum_{\ell=1}^L\sum_{n=1}^{N}\left\langle \xi_{n,\ell},u_{n,\ell}-W^{(\jmath+1)}_{\ell}v_{n,\ell-1}\right\rangle\notag\\
		+&\frac{1}{2}\sum_{n=1}^N\sum_{\ell=2}^L\Bigg(
		\tau_{\ell}^{(\jmath)}\left\|u_{n,\ell}-u_{n,\ell}^{(\jmath)}\right\|^2
		+2\rho\left(u^{(\jmath)}_{n,\ell}-\left({W}^{(\jmath+1)}_{\ell}v^{(\jmath)}_{n,\ell-1}+{b}^{(\jmath+1)}_{\ell}\right)\right)\zz \left(u_{n,\ell}-u_{n,\ell}^{(\jmath)}\right)\notag
		\\ +& \tau_{\ell}^{(\jmath)}\left\|v_{n,\ell-1}-v^{(\jmath)}_{n,\ell-1}\right\|^2-2\rho\left(u^{(\jmath)}_{n,\ell}-\left({W}^{(\jmath+1)}_{\ell}v^{(\jmath)}_{n,\ell-1}+{b}^{(\jmath+1)}_{\ell}\right)\right)\zz{W}^{(\jmath+1)}_{\ell}\left(v_{n,\ell-1}-v^{(\jmath)}_{n,\ell-1}\right)
		\Bigg)\notag
		\\+&\frac{\rho}{2}\sum_{n=1}^N\left(\left\| u_{n,1}-\left(W^{(\jmath+1)}_{1}x_n+{b}^{(\jmath+1)}_{1}\right)\right\|^2\right)+\frac{\tau_1}{2}\sum_{n=1}^N \left\|u_{n,1}-u_{n,1}^{(\jmath)}\right\|^2.
	\end{align}
	
	Hence, subproblem \eqref{eq:bcdsub1} can be separated into $m$ independent subproblems of the following
	structure
	\begin{equation}\label{eq:sub332}
		\begin{aligned}
			\min_{r, s\in\R}\, & \, r-\max\{s,\alpha	 s\}+\frac{d_1}{2}\left(r-\frac{d_3}{d_1}\right)^2+\frac{d_2}{2}\left(s-\frac{d_4}{d_2}\right)^2\\
			\st \;\;\;& \, r\geq  s, r\geq  \alpha s,
		\end{aligned}
	\end{equation}
	where the constants $d_1,d_2> 0$ and $d_3,d_4\in\R$ are dependent on
	the parameters of problem \eqref{eq:bcdsub1}.
	
	Restricting problem \eqref{eq:sub332} to $\{(r,s): s\geq 0\}$,  we obtain $\min_{r\geq  s, s\geq 0}\,  \, r-s+d_1(r-d_3/d_1)^2/2+d_2(s-d_4/d_2)^2/2$ and its closed-form solution \citep[page 81]{facchinei2003finite}
	\begin{equation}
		\label{eq:updateuv1}
		(r_1^*,\,\,s_1^*):=\mathrm{Proj}^{\mathrm{diag}(d_1,d_2)}_{\{(r,\,\,s):r\geq  s,\,\,s\geq 0\}}\left(\frac{d_3-1}{d_1},\,\,\frac{d_4+1}{d_2} \right).
	\end{equation}
	On the other hand, restricting problem \eqref{eq:sub332} to $\{(r,s): s\leq 0\}$, we obtain $\min_{r\geq  \alpha s,\,\,s\leq 0}\,  \, r-\alpha s+d_1(r-d_3/d_1)^2/2+d_2(s-d_4/d_2)^2/2$ and its closed-form solution
	\begin{equation}
		\label{eq:updateuv2}
		(r_2^*,\,\,s_2^*)=\mathrm{Proj}^{\mathrm{diag}(d_1,d_2)}_{\{(r,\,\,s):r\geq \alpha s,\,\,s\leq 0\}}\left(\frac{d_3-1}{d_1},\,\,\frac{d_4+\alpha}{d_2} \right).
	\end{equation}
	By comparing the objective function values at $(r_1^*,\,\,s_1^*)$ and $(r_2^*,\,\,s_2^*)$, we obtain the unique solution of \eqref{eq:sub332}.
	
	We next present the framework of the alternating minimization method for solving \eqref{eq:almsub} as follows
	\begin{algorithm}[H]
		\caption{An alternating minimization method for solving \eqref{eq:almsub}}\label{alg:subbcd}
		\begin{algorithmic}
			\State 
			\textbf{Input:} matrix $A$, the vector $\xi$, the parameters $\rho>0$ and $\tau_1>0$. Initialize\\ $(w^{(\jmath)},b^{(\jmath)},v^{(\jmath)},u^{(\jmath)})$ by \eqref{eq:initbcd}. Set  $\jmath=0$.  
			
			\State {\textbf{Step 1:} Update $(w^{(\jmath+1)},b^{(\jmath+1)})$ by solving problem \eqref{eq:bcdsubWb1}.}
			
			\State  \textbf{Step 2:} Update $({u}^{(\jmath+1)},{v}^{(\jmath+1)})$ by solving problem \eqref{eq:bcdsub1}.
			\State \textbf{Step 3:}  Set
			$\jmath:=\jmath+1$.
			If the stop criterion is not met, return to Step 1.
			
			\State \textbf{Output}: $(w^{(\jmath)},b^{(\jmath)},v^{(\jmath)},u^{(\jmath)})$.
		\end{algorithmic}
	\end{algorithm}
	
	\begin{remark}
		{The reasons why we divide subproblem \eqref{eq:almsub} into $(w,b)$ and $(v,u)$ blocks are two-fold.
			Firstly, 
			$w, b$ are the vectorized weight matrices and bias vectors, respectively, meanwhile $v, u$ are the  auxiliary variables. Secondly, subproblem \eqref{eq:almsub} restricted to both of these two blocks are easy to solve. More precisely, the $(w,b)$ subproblem is strongly convex and has one unique solution, meanwhile the $(v,u)$ subproblem has a closed-form unique solution.}
	\end{remark}

	\subsection{Convergence Analysis of Algorithm \ref{alg:alm}}\label{sec:almconver}
	
	In this subsection, we establish the convergence of 
	Algorithm \ref{alg:alm}.
	\begin{theorem}
		Let $\{(w^{(k)},b^{(k)},v^{(k)},u^{(k)})\}$ be the sequence generated by Algorithm \ref{alg:alm} with $\eta_3>1$. Then the following statements hold.
			
			
			(a) $\liminf_{k\rightarrow \infty}\|u^{(k)}-\Psi(v^{(k)})w^{(k)}-Ab^{(k)}\| = 0$ and the sequence $\{(w^{(k)},b^{(k)},v^{(k)},u^{(k)})\}$ has at least one accumulation point.
			
			(b) $\liminf_{k\rightarrow \infty} \dist((w^{(k)},b^{(k)},v^{(k)},u^{(k)}), \mathcal{Z}^*)=0$, where $\mathcal{Z}^*$ is the set of KKT points of problem \eqref{eq:dnn33}.
			
			(c) If in addition that $\gamma=1$, then $\lim_{k\rightarrow \infty}\|u^{(k)}-\Psi(v^{(k)})w^{(k)}-Ab^{(k)}\| = 0.$ Furthermore, any accumulation point $(w^{*},b^{*},v^{*},u^{*})$ of $\{(w^{(k)},b^{(k)},v^{(k)}$, $u^{(k)})\}$  is a KKT point of problem \eqref{eq:dnn33}.
	\end{theorem}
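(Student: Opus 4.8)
The plan is to read each outer iteration of Algorithm~\ref{alg:alm} as delivering an \emph{approximate} KKT tuple of \eqref{eq:dnn33} whose multiplier is exactly the updated $\xi^{(k)}$, and then to force both the primal infeasibility and the dual residual to zero. Throughout write $r^{(k)}:=u^{(k)}-\Psi(v^{(k)})w^{(k)}-Ab^{(k)}$. Two preliminary facts come first. Since each iterate lies in $\Omega_3$ we have $v^{(k)}\ge\sigma(u^{(k)})$, hence $\beta\zz(v^{(k)}-\sigma(u^{(k)}))\ge 0$ and $\mathcal{O}(w^{(k)},v^{(k)},u^{(k)})\ge 0$; together with \eqref{eq:almsubcondi1} and completing the square this yields $\mathcal{O}(w^{(k)},v^{(k)},u^{(k)})<\theta+\|\xi^{(k-1)}\|^2/(2\rho^{(k-1)})$. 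Also $\epsilon_k\to 0$, because after the first $\gamma$ steps every iteration rescales $\epsilon$ by $\sqrt{\eta_1}$ or $\eta_4$, both in $(0,1)$, while $\rho^{(k)}$ is nondecreasing. The decisive algebraic step is to substitute the update \eqref{eq:xiupdate} into $\partial\mathcal{L}_{\rho^{(k-1)}}$: a direct computation of the $(w,b,v,u)$ (sub)gradients of the augmented Lagrangian shows that the penalty contribution $\xi^{(k-1)}+\rho^{(k-1)}r^{(k)}$ collapses to $\xi^{(k)}$, so \eqref{eq:almsubcondi2} states precisely that \eqref{eq:kktdnn2-1}--\eqref{eq:kktdnn2-3} hold at $(w^{(k)},b^{(k)},v^{(k)},u^{(k)})$ with multiplier $\xi^{(k)}$ and a nonnegative $\Omega_3$-normal-cone multiplier $\mu^{(k)}$, up to an error of norm $\epsilon_k$ (and $\|A\zz\xi^{(k)}\|\le\epsilon_k$).

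For part~(a) I would prove $\liminf_k\|r^{(k)}\|=0$ by splitting on whether $\{\rho^{(k)}\}$ is bounded. If it is bounded, then \eqref{eq:rhoupdate} fires only finitely often, so \eqref{eq:rhocondition} holds for all large $k$; setting $b_k:=\max_{t\in\{k-\gamma+1,\dots,k\}}\|r^{(t)}\|$ one checks from \eqref{eq:rhocondition} that $b_{k+\gamma}\le\eta_1 b_k$, whence $\|r^{(k)}\|\to 0$ geometrically. If instead $\rho^{(k)}\to\infty$, then \eqref{eq:rhoupdate} fires infinitely often and along those indices $\rho^{(k)}\ge\|\xi^{(k)}\|^{1+\eta_3}$ with $\eta_3>1$ forces $\|\xi^{(k)}\|^2/\rho^{(k)}$ to stay bounded; propagating this estimate across the intervening constant-$\rho$ blocks (on which \eqref{eq:rhocondition} contracts the residuals) keeps $\|\xi^{(k)}\|^2/\rho^{(k)}$ bounded, so by the preliminary bound $\mathcal{O}(w^{(k)},v^{(k)},u^{(k)})$ is bounded. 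This makes $\mathcal{R}_1(w^{(k)})$ and $\mathcal{R}_2(v^{(k)})$ bounded, hence $\|w^{(k)}\|,\|v^{(k)}\|$ bounded, and the bootstrapping of Theorem~\ref{thm:nonem}, now carrying the extra term $r^{(k)}$, bounds $\|b^{(k)}\|$ and $\|u^{(k)}\|$. Boundedness of the iterates gives the accumulation point, and $r^{(k)}=(\xi^{(k)}-\xi^{(k-1)})/\rho^{(k-1)}$ with $\rho^{(k)}\to\infty$ gives $\liminf_k\|r^{(k)}\|=0$.

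Parts~(b) and~(c) then rest on a multiplier-boundedness argument driven by Lemma~\ref{lem:mfcqsys}. Choosing a subsequence along which $r^{(k_j)}\to0$ and (by boundedness) the iterates converge to some $(w^*,b^*,v^*,u^*)$, suppose the multipliers $(\xi^{(k_j)},\mu^{(k_j)})$ were unbounded; dividing the approximate system of the first paragraph by $\|(\xi^{(k_j)},\mu^{(k_j)})\|$ and letting $\epsilon_{k_j}\to0$ kills the (bounded) $\partial\mathcal{O}$ part and produces a nonzero solution $(\bar\xi,\bar\mu)$ of the homogeneous system \eqref{eq:kktdnn3-1}--\eqref{eq:kktdnn3-3} with $\bar\mu\zz\mathcal{C}(v^*,u^*)=0$, contradicting Lemma~\ref{lem:mfcqsys}. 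Hence the multipliers are bounded; passing to a further convergent subsequence and using outer semicontinuity of $\partial\mathcal{O}$ and of $\mathcal{N}_{\Omega_3}$ shows the limit satisfies \eqref{eq:kktdnn2-1}--\eqref{eq:kktdnn2-4}, i.e.\ lies in $\mathcal{Z}^*$, which is part~(b). For part~(c) with $\gamma=1$, condition \eqref{eq:rhocondition} becomes the one-step contraction $\|r^{(k)}\|\le\eta_1\|r^{(k-1)}\|$, which lets one control the drift of $\xi^{(k)}$ across each constant-$\rho$ block and upgrade the above to global boundedness of $\{\xi^{(k)}\}$; then $r^{(k)}=(\xi^{(k)}-\xi^{(k-1)})/\rho^{(k-1)}$ together with $\rho^{(k)}\to\infty$ (in the unbounded case) gives $\lim_k\|r^{(k)}\|=0$, so \emph{every} accumulation point is feasible, and the limiting argument makes each of them a KKT point (equivalently, by Theorem~\ref{thm:cone1}, a limiting stationary point, and by Theorem~\ref{thm:lsta2} an MPCC W-stationary point of \eqref{eq:dnn}).

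I expect the genuine obstacle to be the unbounded-penalty regime in part~(a): closing the estimate that $\|\xi^{(k)}\|^2/\rho^{(k)}$ (hence the iterates) stays bounded requires carefully tracking the multiplier drift $\xi^{(k)}=\xi^{(k-1)}+\rho^{(k-1)}r^{(k)}$ over each block on which $\rho$ is frozen at a possibly huge value $\ge\|\xi\|^{1+\eta_3}$ while \eqref{eq:rhocondition} only guarantees geometric decay of $\|r^{(k)}\|$ over windows of length $\gamma$; the interplay of the two update rules is exactly what must be quantified, and it is also where the hypothesis $\eta_3>1$ and (for the $\lim$ statement) the choice $\gamma=1$ are used.
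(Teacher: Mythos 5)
Your outline reproduces the architecture of the paper's proof: the same dichotomy on whether $\{\rho^{(k)}\}$ stays bounded, the same windowed contraction argument from \eqref{eq:rhocondition} in the bounded regime, the same observation that substituting \eqref{eq:xiupdate} into the inexact stationarity condition \eqref{eq:almsubcondi2} turns each iterate into an approximate KKT tuple with multiplier $\xi^{(k)}$, the same MFCQ-based normalization-and-contradiction argument (via Lemma \ref{lem:mfcqsys}) to bound $(\xi^{(j_k)},\mu^{(j_k)})$, and the same roles for $\eta_3>1$ and $\gamma=1$.

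There is, however, one step that does not go through as written, and it carries the conclusion in both the unbounded-$\rho$ part of (a) and in (c). You infer $\liminf_k\|r^{(k)}\|=0$ (resp.\ $\lim_k\|r^{(k)}\|=0$) from the identity $r^{(k)}=(\xi^{(k)}-\xi^{(k-1)})/\rho^{(k-1)}$ together with $\rho^{(k)}\to\infty$. That inference requires $\{\xi^{(k)}\}$ to be bounded, and your proposed ``upgrade to global boundedness of $\{\xi^{(k)}\}$'' is exactly what cannot be expected when $\rho^{(k)}\to\infty$: each update adds $\rho^{(k-1)}r^{(k)}$, whose norm need not vanish even though $\|r^{(k)}\|\to0$, and the safeguard $\rho^{(k)}\ge\|\xi^{(k)}\|^{1+\eta_3}$ in \eqref{eq:rhoupdate} controls only the ratio $\|\xi^{(k)}\|/\rho^{(k)}$, never $\|\xi^{(k)}\|$ itself. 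The repair uses a piece you already wrote down: divide your completed-square consequence of \eqref{eq:almsubcondi1} by $\rho^{(k)}/2$ to get
$\bigl\|\xi^{(k)}/\rho^{(k)}+r^{(k+1)}\bigr\|^2\le \tfrac{2}{\rho^{(k)}}\bigl(\theta-\mathcal{O}(w^{(k+1)},v^{(k+1)},u^{(k+1)})\bigr)+\|\xi^{(k)}\|^2/(\rho^{(k)})^2$,
and combine it with $\|\xi^{(k)}\|/\rho^{(k)}\le(\rho^{(k)})^{-\eta_3/(1+\eta_3)}\to0$ along the indices where \eqref{eq:rhoupdate} fires (propagated to all $k$ via the one-step contraction when $\gamma=1$) and with the lower-boundedness of $\mathcal{O}$ on $\Omega_3$. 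This ratio-based estimate, not the raw update identity, is how the paper extracts both the $\liminf$ in (a) and the full limit in (c); it is also where $\eta_3>1$ enters, since $\|\xi^{(k)}\|^2/\rho^{(k)}\le(\rho^{(k)})^{(1-\eta_3)/(1+\eta_3)}\to0$ is what keeps $\mathcal{O}$, and hence the iterates, bounded. Everything else in your proposal matches the paper's proof.
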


	\begin{proof}
		(a) 
		Case (i): the sequence $\{\rho^{(k)}\}$ is bounded. In this case, the formula \eqref{eq:rhoupdate} can be called at most finite times.
		That is for some $k_{0}>\gamma$, it holds that $\rho^{(k)}=\rho^{(k_0)}$ and
		\begin{equation}\label{eq:ineqc2-2}
			\left\|u^{(k)}-\Psi\left(v^{(k)}\right)w^{(k)}-Ab^{(k)}\right\|\leq \eta_1 \max_{t=k-\gamma,\ldots,k-1}\left\|u^{(t)}-\Psi\left(v^{(t)}\right)w^{(t)}-Ab^{(t)}\right\|.
		\end{equation}
		for all $k > k_{0}$.
		By simple calculation, it then follows that for all $k> k_0$,
		\begin{equation}\label{eq:ineqc3}
			\max_{t=k,k+1,\ldots,k+\gamma-1}\left\|u^{(t)}-\Psi\left(v^{(t)}\right)w^{(t)}-Ab^{(t)}\right\|\leq \eta_1 \max_{t=k-\gamma,\ldots,k-1}\left\|u^{(t)}-\Psi\left(v^{(t)}\right)w^{(t)}-Ab^{(t)}\right\|,
		\end{equation}
		and
		hence
		$
		\lim _{k \rightarrow \infty} \max_{t=k-\gamma,\ldots,k-1}\|u^{(t)}-\Psi(v^{(t)})w^{(t)}-Ab^{(t)}\|=0.
		$
		This yields
		\begin{equation}\label{eq:kinfty}
			\liminf_{k\rightarrow \infty}\left\|u^{(k)}-\Psi\left(v^{(k)}\right)w^{(k)}-Ab^{(k)}\right\|=\lim_{k\rightarrow \infty}\left\|u^{(k)}-\Psi\left(v^{(k)}\right)w^{(k)}-Ab^{(k)}\right\| = 0.
		\end{equation}
		We next show that for all $k> k_0$,
		\begin{equation}\label{eq:xiineq-2}
			\left\|\xi^{(k)}\right\| \leq \left\|\xi^{(k_0)}\right\|+\rho^{(k_0)}\sum_{i=1}^{k-k_0}\left\|u^{(k_0+i)}-\Psi\left(v^{(k_0+i)}\right)w^{(k_0+i)}-Ab^{(k_0+i)}\right\|.
		\end{equation}
		In view of $\rho^{(k)}=\rho^{(k_0)}$ for all $k> k_0$ and the updating rule \eqref{eq:xiupdate} of $\xi^{(k)}$ that
		\begin{equation*}
			\left\|\xi^{(k_0+i)}\right\| \leq \left\|\xi^{(k_0+i-1)}\right\|+\rho^{(k_0)} \left\|u^{(k_0+i)}-\Psi\left(v^{(k_0+i)}\right)w^{(k_0+i)}-Ab^{(k_0+i)}\right\|
		\end{equation*}
		for all $i \in \mathbb{N}_+$. Summing up the above inequalities
		for all $i \in \mathbb{N}_+$ yields \eqref{eq:xiineq-2}.
		
		Combining {inequalities} \eqref{eq:ineqc2-2}, \eqref{eq:ineqc3} {with} \eqref{eq:xiineq-2}, we obtain that
		\begin{equation*}
			\begin{aligned}
				\left\|\xi^{(k)}\right\| &\leq \left\|\xi^{(k_0)}\right\|+\rho^{(k_0)}\sum_{i=1}^{k-k_0}\eta_1^{\lceil i/\gamma\rceil}\max_{t=k_0-\gamma+1,\ldots,k_0}\left\|u^{(t)}-\Psi\left(v^{(t)}\right)w^{(t)}-Ab^{(t)}\right\| \\
				&\leq \left\|\xi^{(k_0)}\right\|+\frac{\gamma \eta_1 \rho^{(k_0)}}{1-\eta_1}\max_{t=k_0-\gamma+1,\ldots,k_0}\left\|u^{(t)}-\Psi\left(v^{(t)}\right)w^{(t)}-Ab^{(t)}\right\|.
			\end{aligned}
		\end{equation*}
		Hence $\{\xi^{(k)}\}$ is bounded.
		
		Furthermore, 
		we obtain by the inequality \eqref{eq:almsubcondi1} and the definition of $\mathcal{L}_{\rho}$ that for all $k\in\mathcal{K}$,
		\begin{equation}\label{eq:xiineq3oo}
			\begin{aligned}
				&{\mathcal{O}}\left(w^{(k+1)},v^{(k+1)},u^{(k+1)}\right)+ \frac{\rho^{(k)}}{2}\left\|\frac{\xi^{(k)}}{\rho^{(k)}}+u^{(k+1)}-\Psi\left(v^{(k+1)}\right)w^{(k+1)}-Ab^{(k+1)}\right\|^2 \\
				\leq &\theta+\frac{\left\|\xi^{(k)}\right\|^2}{2\rho^{(k)}}.
			\end{aligned}
		\end{equation}
		
		It follows the inclusion $\left\{\left(w^{(k+1)}, b^{(k+1)},v^{(k+1)},u^{(k+1)}\right)\right\} \subseteq \Omega_3$ and the definition of $\mathcal{O}$ that
		$\left\{ {\mathcal{O}}\left(w^{(k+1)},v^{(k+1)},u^{(k+1)}\right)\right\}$ is bounded away from $0$.
		Using this, $\{\xi^{(k)}\}$ being bounded, $\rho^{(k)}=\rho^{(k_0)}$ for all $k>k_0$ and the relation \eqref{eq:xiineq3oo} yield that $\left\{ {\mathcal{O}}\left(w^{(k+1)},v^{(k+1)},u^{(k+1)}\right)\right\}$ is bounded. Using a similar method as that in the proof of Theorem \ref{thm:nonem}, we obtain that the sequence $\{(w^{(k)},b^{(k)},v^{(k)},u^{(k)})\}$ is bounded.
		Hence, the sequence $\{(w^{(k)},b^{(k)},v^{(k)}$, $u^{(k)})\}$ has at least one accumulation point. 

		Case (ii): the sequence $\{\rho^{(k)}\}$ is unbounded. In this case, the set
		\begin{equation}\label{eq:kdefine}
			\mathcal{K}:=\left\{k: \rho^{(k)}=
			\max\left\{\rho^{(k-1)}/\eta_2, \left\|\xi^{(k)}\right\|^{1+\eta_3}\right\}\right\}
		\end{equation} is infinite. Together with {the inclusion} $\eta_2\in (0,1)$, it follows that
		$\left\{\rho^{(k)}\right\} \rightarrow \infty$ as $k \rightarrow \infty, k\in\mathcal{K}$.
		For all $k\in\mathcal{K}$, we have $\|\xi^{(k)}\|^{1+\eta_3}\leq \rho^{(k)}$, which further yields
		$
		\frac{\left\|\xi^{(k)}\right\|}{\rho^{(k)}}\leq (\rho^{(k)})^{-\eta_3/(1+\eta_3)}.
		$
		Together with {the fact} $\left\{\rho^{(k)}\right\} \rightarrow \infty$ as $k \rightarrow \infty, k\in\mathcal{K}$, we derive
		\begin{equation}\label{eq:xirela}
			\lim_{k \rightarrow \infty, k\in\mathcal{K}} \frac{\left\|\xi^{(k)}\right\|}{\rho^{(k)}}=0.
		\end{equation}
		Similarly, we have $
		\frac{\left\|\xi^{(k)}\right\|^2}{\rho^{(k)}}\leq (\rho^{(k)})^{(1-\eta_3)/(1+\eta_3)}
		$ and
		\begin{equation}\label{eq:xirela22}
			\lim_{k \rightarrow \infty, k\in\mathcal{K}} \frac{\left\|\xi^{(k)}\right\|^2}{\rho^{(k)}}=0
		\end{equation}
		by $\eta_3>1$ and $\left\{\rho^{(k)}\right\} \rightarrow \infty$ as $k \rightarrow \infty, k\in\mathcal{K}$.
		
		Similarly, we obtain \eqref{eq:xiineq3oo} with $k\in\mathbb{N}_+$ replaced by $k\in\mathcal{K}$.
		Dividing both sides of the above inequality by $\rho^{(k)}/2$, we obtain that for all $k\in\mathcal{K}$,
		\begin{equation}\label{eq:xiineq3}
			\begin{aligned}
				&\left\|\frac{\xi^{(k)}}{\rho^{(k)}}+u^{(k+1)}-\Psi\left(v^{(k+1)}\right)w^{(k+1)}-Ab^{(k+1)}\right\|^2\\ \leq& \frac{2}{\rho^{(k)}}\left(\theta-{\mathcal{O}}\left(w^{(k+1)},v^{(k+1)},u^{(k+1)}\right)\right)+\frac{\left\|\xi^{(k)}\right\|^2}{(\rho^{(k)})^2}.
			\end{aligned}
		\end{equation}
		Using this, the relation \eqref{eq:xirela}, $\left\{ {\mathcal{O}}\left(w^{(k+1)},v^{(k+1)},u^{(k+1)}\right)\right\}$ is bounded away from $0$, and $\left\{\rho^{(k)}\right\} \rightarrow \infty$ as $k \rightarrow \infty$ and $k\in\mathcal{K}$, we then derive that
		\begin{equation}\label{eq:xiinfty}
			\lim_{k \rightarrow \infty, k\in\mathcal{K}}\left\|\frac{\xi^{(k)}}{\rho^{(k)}}+u^{(k+1)}-\Psi\left(v^{(k+1)}\right)w^{(k+1)}-Ab^{(k+1)}\right\|=0,
		\end{equation}
		which together with \eqref{eq:xirela} yield
		\begin{equation}\label{eq:feasviinfty}
			\begin{aligned}
				&\liminf_{k\rightarrow \infty}\left\|u^{(k)}-\Psi\left(v^{(k)}\right)w^{(k)}-Ab^{(k)}\right\|\\=&\lim_{k\rightarrow \infty, k\in\mathcal{K}}\left\|u^{(k+1)}-\Psi\left(v^{(k+1)}\right)w^{(k+1)}-Ab^{(k+1)}\right\| = 0.
			\end{aligned}
		\end{equation}
		
		Combining  the relations \eqref{eq:xiineq3oo}, \eqref{eq:xirela22}, $\rho^{(k)}>0$ {with the sequence} $\left\{ {\mathcal{O}}\left(w^{(k+1)},v^{(k+1)},u^{(k+1)}\right)\right\}$ being bounded away from 0, there exists $\bar{k}>0$ such that $\mathcal{O}\left(w^{(k+1)},v^{(k+1)},u^{(k+1)}\right)<2\theta$ for all $k>\bar{k}$, $k\in\mathcal{K}$. Using a similar method as that in the proof of Theorem \ref{thm:nonem}, we obtain that the sequence $\{\left(w^{(k+1)},b^{(k+1)},v^{(k+1)},u^{(k+1)}\right)\}_{k\in\mathcal{K}}$ is bounded.
		Hence, the sequence $\{(w^{(k+1)},b^{(k+1)},v^{(k+1)}$, $u^{(k+1)})\}_{k\in\mathcal{K}}$ has at least one accumulation point.
		
		Combining the above two cases, the proof is completed.

		(b) Let $(w^{*},b^{*},v^{*},u^{*})$ be an accumulation point of $\{(w^{(k+1)}, b^{(k+1)}, v^{(k+1)}$, $u^{(k+1)})\}_{k\in\mathcal{K}}$, where $\mathcal{K}$ is defined in \eqref{eq:kdefine}.
		By what we have proved in the statement (a), the closedness of $\Omega_2$, we obtain that the point $(w^{*},b^{*},v^{*},u^{*})$ belongs to the feasible set of problem \eqref{eq:dnn33}.
		
		Since $(w^{*},b^{*},v^{*},u^{*})$ is an accumulation point of $\left\{\left(w^{(k+1)},b^{(k+1)},v^{(k+1)},u^{(k+1)}\right)\right\}_{k\in\mathcal{K}}$, there exists a subsequence $\left\{(w^{(j_k)},b^{(j_k)},v^{(j_k)},u^{(j_k)})\right\}$ of $\left\{\left(w^{(k+1)},b^{(k+1)},v^{(k+1)},u^{(k+1)}\right)\right\}_{k\in\mathcal{K}}$ such that
		$$\lim_{k\rightarrow\infty}\left(w^{(j_k)},b^{(j_k)},v^{(j_k)},u^{(j_k)}\right)= (w^{*},b^{*},v^{*},u^{*}).$$
		
		Since $\Omega_3\subset\Omega_2$, then Lemma \ref{lem:mfcqsys} yields that the MFCQ holds at any feasible point $(w,b,v,u)$ for problem \eqref{eq:almsub}. Together with \cite[Theorem 6.14]{roc1998var}, we obtain that $\mathcal{N}_{\Omega_3}(w,b,v,u)= \{ \nabla_{(v,u)} (\mu\zz \mathcal{C}( v,u))  : \,\, \mu\zz \mathcal{C}(v,u)=0,\,\,\mu\in\R_+^{2m}\}$.
		Together with {the inequality} \eqref{eq:almsubcondi2},
		and the definition of $\mathcal{L}_{\rho}(w,b,v,u;\xi)$, there exist $\mu^{(j_k)}\geq 0$ and $\zeta^{(j_k)}$ satisfying $(\mu^{(j_k)})\zz \mathcal{C}( v^{(j_k)},u^{(j_k)})=0$, $\|\zeta^{(j_k)}\|\leq \epsilon_{j_k}$ such that
		\begin{equation}\label{eq:zetakini}
			\begin{aligned}
				\zeta^{(j_k)}\in \partial_{(w,b,v,u)} &{\mathcal{O}}\left(w^{(j_k)},v^{(j_k)},u^{(j_k)}\right) + \nabla\bigg( \left(\xi^{(j_k-1)}\right)\zz \left(u^{(j_k)}-\Psi\left(v^{(j_k)}\right)w^{(j_k)}-Ab^{(j_k)}\right)\\+&\frac{\rho^{(j_k-1)}}{2}\left\|u^{(j_k)}-\Psi\left(v^{(j_k)}\right)w^{(j_k)}-A^{(j_k)}b^{(j_k)}\right\|^2+\left(\mu^{(j_k)}\right)\zz \mathcal{C}\left( v^{(j_k)},u^{(j_k)}\right)\bigg).
			\end{aligned}
		\end{equation}
		In view of $\xi^{(j_k)}=\xi^{(j_k-1)}+\rho^{(j_k-1)} \left(u^{(j_k)}-\Psi(v^{(j_k)})w^{(j_k)}-Ab^{(j_k)}\right)$ and $\nabla_z \frac{1}{2}\|z\|^2=z\nabla_z z$ for $z\in\R^m$, we then obtain that
		\begin{equation}\label{eq:zetak}
			\begin{aligned}
				\zeta^{(j_k)}\in &\partial_{(w,b,v,u)} {\mathcal{O}}\left(w^{(j_k)},v^{(j_k)},u^{(j_k)}\right) \\&+ \nabla\left( \left(\xi^{(j_k)}\right)\zz \left(u^{(j_k)}-\Psi\left(v^{(j_k)}\right)w^{(j_k)}-Ab^{(j_k)}\right)+\left(\mu^{(j_k)}\right)\zz \mathcal{C}\left( v^{(j_k)},u^{(j_k)}\right)\right).
			\end{aligned}
		\end{equation}
		
		It holds from the update rule in Algorithm \ref{alg:alm} that $
		\epsilon_{k}\leq \max\{\sqrt{\eta_1}, \eta_4\}\epsilon_{k-1}$ for all $k>\gamma$.
		Together with {the relationship} $0<\eta_1,\eta_4<1$, we derive $\lim_{k \rightarrow \infty}\epsilon_{k}=0.$
		It then follows that $\zeta^{(j_k)}\rightarrow 0$ as $k\rightarrow\infty$.
		
		Let $r_k:=\max\left\{\|\xi^{(j_k)}\|_{\infty}, \|\mu^{(j_k)}\|_{\infty}\right\}$. Suppose that $\left\{r_{k}\right\}$ is unbounded. 
		Without loss of generality, we assume that as $k\rightarrow\infty$, it holds that
		\begin{equation}
			\frac{\xi^{(j_k)}}{r_k}\rightarrow \xi^*,\text{ and } \frac{{\mu}^{(j_k)}}{r_k}\rightarrow{\mu}^*.
		\end{equation}
		It then holds that $\max\{\|\xi^*\|_{\infty}, \|{\mu}^*\|_{\infty}\}=1$ and ${\mu}^*\geq 0$, since ${\mu}^{(j_k)}\geq 0$ for all $k\ge 0$.
		
		Dividing by $r_k$ and taking the limit $ k \rightarrow\infty$ on the both sides of \eqref{eq:zetak}, we obtain
		\begin{equation*}
			0 =  \nabla_{(w,b,v,u)} \left((\xi^{*})\zz (u^*-\Psi(v^*)w^*-Ab^*)+ (\mu^*)\zz \mathcal{C}( v^*,u^*)\right).
		\end{equation*}
		This together with Lemma \ref{lem:mfcqsys} and {the equality} $\max\{\|\xi^*\|_{\infty}, \|{\mu}^*\|_{\infty}\}=1$ lead to a contradiction. 
		$\left\{r_{k}\right\}$ is hence bounded. Without loss of generality, we assume that as $ k \rightarrow \infty$,
		$$
		\xi^{(j_k)}\rightarrow \xi^*,\text{ and } {\mu}^{(j_k)}\rightarrow{\mu}^*.
		$$
		Since ${\mu}^{(j_k)}\geq 0$ for all $k\ge 0$, we have
		$
		{\mu}^{*} \geq 0.
		$
		
		Taking the limit $k \rightarrow\infty$ on both sides of \eqref{eq:zetak},
		we obtain that
		\begin{equation*}
			\begin{aligned}
				0 \in \partial_{(w,b,v,u)} \left({\mathcal{O}}(w^*,v^*,u^*)+(\xi^{*})\zz (u^*-\Psi(v^*)w^*-Ab^*)+ (\mu^*)\zz \mathcal{C}( v^*,u^*)\right),
			\end{aligned}
		\end{equation*}
		which together with  {the inequality} 
		$ 	{\mu}^{*} \geq 0$ yield that $(w^{*},b^{*},v^{*},u^{*})$ is a KKT point of problem \eqref{eq:dnn33}. Hence $\liminf_{k\rightarrow \infty} \dist((w^{(k)},b^{(k)},v^{(k)},u^{(k)}), \mathcal{Z}^*)=0$ by the definition of $(w^{*},b^{*},v^{*},u^{*})$, which completes the statement (b).
		
		(c)
		Case (i): the set $\mathcal{K}$ defined in \eqref{eq:kdefine} is finite. By what we have proved in the statement (a), we have $\lim_{k\rightarrow \infty}\left\|u^{(k)}-\Psi\left(v^{(k)}\right)w^{(k)}-Ab^{(k)}\right\| = 0$ in this case. Using a similar method as that in the proof of statement (b), any accumulation point $(w^{*},b^{*},v^{*},u^{*})$ of $\{(w^{(k)},b^{(k)},v^{(k)},u^{(k)})\}$ is a KKT point of problem \eqref{eq:dnn33}.
		
		Case (ii): the set $\mathcal{K}$ defined in \eqref{eq:kdefine} is infinite.
		
		For any given $k\in\mathbb{N}_+$, let $t_k$ be the largest element in $\mathcal{K}$ satisfying $t_k\leq k$. We then show that
		\begin{equation}\label{eq:xiineq}
			\frac{\left\|\xi^{(k)}\right\|}{\rho^{(k)}} \leq \frac{\left\|\xi^{(t_k)}\right\|}{\rho^{(t_k)}}+\sum_{i=1}^{k-t_k}\left\|
			u^{(t_k+i)}-\Psi \left(v^{(t_k+i)}\right)w^{(t_k+i)}-Ab^{(t_k+i)}\right\|.
		\end{equation}
		Clearly, the inequality \eqref{eq:xiineq} holds when $k=t_k.$ We now suppose $k>t_k.$ In view of {the fact that} $\rho^{(t_k+i)}=\rho^{(t_k)}$ for all $0<i \leq k-t_k$ and the updating rule \eqref{eq:xiupdate} of $\xi^{(k)}$, we have
		\begin{equation*}
			\frac{\left\|\xi^{(t_k+i)}\right\|}{\rho^{(t_k+i)}}=\frac{\left\|\xi^{(t_k+i)}\right\|}{\rho^{(t_k+i-1)}}\leq \left\|\frac{\xi^{(t_k+i-1)}}{\rho^{(t_k+i-1)}}\right\|+\left\|
			u^{(t_k+i)}-\Psi\left(v^{(t_k+i)}\right)w^{(t_k+i)}-Ab^{(t_k+i)}
			\right\|
		\end{equation*}
		for all $i \in [k-t_k]$. Summing up the above inequalities
		for all $i \in [k-t_k]$ yields \eqref{eq:xiineq}.
		
		For all $i \in [k-t_k]$, we obtain from $\gamma=1$, \eqref{eq:rhocondition} and the definition of $t_k$ that
		\begin{equation}\label{eq:ineqc2}
			\begin{aligned}
				&\left\|u^{(t_k+i)}-\Psi\left(v^{(t_k+i)}\right)w^{(t_k+i)}-Ab^{(t_k+i)}\right\|
				\\\leq& \eta_1\left\|u^{(t_k+i-1)}-\Psi\left(v^{(t_k+i-1)}\right)w^{(t_k+i-1)}-Ab^{(t_k+i-1)}\right\|.
			\end{aligned}	
		\end{equation}
		Together with the inequality \eqref{eq:xiineq}, we derive
		\begin{equation}\label{eq:xiineq2}
			\begin{aligned}
				&\frac{\left\|\xi^{(k)}\right\|}{\rho^{(k)}}
				\leq \frac{\left\|\xi^{(t_k)}\right\|}{\rho^{(t_k)}}+
				\sum_{i=1}^{k-t_k}\eta^{i-1}\left\|
				u^{(t_k+1)}-\Psi\left(v^{(t_k+1)}\right)w^{(t_k+1)}-Ab^{(t_k+1)}\right\|\\
				\leq	&	 \frac{\left\|\xi^{(t_k)}\right\|}{\rho^{(t_k)}}
				+\frac{1}{1-\eta_1}\left\|
				u^{(t_k+1)}-\Psi\left(v^{(t_k+1)}\right)w^{(t_k+1)}-Ab^{(t_k+1)}\right\|.\\
			\end{aligned}
		\end{equation}
		
		Together with {the equalities} \eqref{eq:xirela}, \eqref{eq:feasviinfty}, $t_k\in\mathcal{K}$ and $\mathcal{K}$ being infinite, we can conclude that
		\begin{equation}\label{eq:xirela2}
			\lim_{k \rightarrow \infty} \frac{\left\|\xi^{(k)}\right\|}{\rho^{(k)}}=0.
		\end{equation}
		
		Similarly, we obtain  \eqref{eq:xiineq3} with $k\in\mathcal{K}$ replaced by $k\in\mathbb{N}_+$.
		This together with  {the fact} \eqref{eq:xirela2} and the lower boundedness of ${\mathcal{O}}\left(w^{(k+1)},v^{(k+1)},u^{(k+1)}\right)$ imply that
		$$\lim_{k \rightarrow \infty}\left\|u^{(k)}-\Psi\left(v^{(k)}\right)w^{(k)}-Ab^{(k)}\right\|=\lim_{k \rightarrow \infty}\left\|u^{(k+1)}-\Psi\left(v^{(k+1)}\right)w^{(k+1)}-Ab^{(k+1)}\right\|= 0.$$
		
		Using a similar method as that in the proof of statement (b), any accumulation point $(w^{*},b^{*},v^{*},u^{*})$ of $\{(w^{(k)},b^{(k)},v^{(k)},u^{(k)})\}$ is a KKT point of problem \eqref{eq:dnn33}.
		
		This proof is then completed by summarizing the above two cases.
	\end{proof}
	
	\subsection{Convergence Analysis of Algorithm \ref{alg:subbcd}}\label{sec:amconver}
	
	In this subsection, we prove the global convergence of Algorithm \ref{alg:subbcd}
	for solving subproblem \eqref{eq:almsub} and show that the condition \eqref{eq:almsubcondi1} 
	{always} hold.
	
	\begin{theorem}\label{thm:bcd}
		Let $\{(w^{(\jmath)},b^{(\jmath)},v^{(\jmath)},u^{(\jmath)})\}$ be the sequence generated by Algorithm \ref{alg:subbcd}. Then we have the following statements.
		
		(a) It holds that
		\begin{equation}\label{eq:ineqjmath}
			\begin{aligned}
				&\mathcal{L}_{\rho}\left(w^{(\jmath+1)},b^{(\jmath+1)},v^{(\jmath+1)},u^{(\jmath+1)};\xi\right)-\mathcal{L}_{\rho}\left(w^{(\jmath)},b^{(\jmath)},v^{(\jmath)},u^{(\jmath)};\xi\right)\\
				\leq& -\frac{\lb_w}{2}\left\|w^{(\jmath+1)}-w^{(\jmath)}\right\|^2-\frac{\tau_1}{2}\left\|u^{(\jmath+1)}-u^{(\jmath)}\right\|^2-\frac{\tau_1}{2}\sum_{n=1}^N\sum_{\ell=1}^{L-1}\left\|v_{n,\ell}^{(\jmath+1)}-v_{n,\ell}^{(\jmath)}\right\|^2.
			\end{aligned}
		\end{equation}
		
		(b) The sequence $\{\mathcal{L}_{\rho}(w^{(\jmath)},b^{(\jmath)},v^{(\jmath)},u^{(\jmath)};\xi)\}$ is convergent.
		
		(c) The sequence $\{(w^{(\jmath)},b^{(\jmath)},v^{(\jmath)},u^{(\jmath)})\}$ is bounded.
		
		(d) It holds that \begin{equation}\label{eq:WVuBbound}
			\lim_{\jmath\rightarrow \infty}\left\|w^{(\jmath+1)}-w^{(\jmath)}\right\|^2+\left\|b^{(\jmath+1)}-b^{(\jmath)}\right\|^2+\left\|v^{(\jmath+1)}-v^{(\jmath)}\right\|^2+\left\|u^{(\jmath+1)}-u^{(\jmath)}\right\|^2=0.
		\end{equation}
		
		(e) The sequence $\{(w^{(\jmath)},b^{(\jmath)},v^{(\jmath)},u^{(\jmath)})\}$ has at least one accumulation point, and
		any accumulation point $(w^*,b^*,v^*,u^*)$ of  $\{(w^{(\jmath)},b^{(\jmath)},v^{(\jmath)},u^{(\jmath)})\}$ is  a KKT point of \eqref{eq:almsub}.
	\end{theorem}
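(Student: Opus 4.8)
The plan is to establish the five claims in the order (a) $\to$ (b) $\to$ (c) $\to$ (d) $\to$ (e), since each rests on its predecessors. The heart of the argument is the sufficient-decrease inequality (a); from it, monotonicity, boundedness, and asymptotic regularity follow by routine bookkeeping, and the subdifferential optimality of the two exactly-solved blocks then delivers (e) in the limit.

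For (a), I would split one outer iteration into its two block updates and bound each contribution separately. Writing the total change
\[
\mathcal{L}_{\rho}\big(w^{(\jmath+1)},b^{(\jmath+1)},v^{(\jmath+1)},u^{(\jmath+1)};\xi\big)-\mathcal{L}_{\rho}\big(w^{(\jmath)},b^{(\jmath)},v^{(\jmath)},u^{(\jmath)};\xi\big)
\]
as the $(w,b)$-decrease plus the $(v,u)$-decrease, I proceed as follows. Since \eqref{eq:bcdsubWb1} is solved exactly and is strongly convex in $(w,b)$ (the $b$-block through $A\zz A=N I_{\overline{N}}$, the $w$-block through the regularizer and quadratic), the first difference is at most $-\frac{\lambda_w}{2}\|w^{(\jmath+1)}-w^{(\jmath)}\|^2$. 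For the second, $(v^{(\jmath+1)},u^{(\jmath+1)})$ is the exact minimizer over $\Omega_3$ of $\mathcal{L}_{\rho}(w^{(\jmath+1)},b^{(\jmath+1)},\cdot,\cdot;\xi)+\mathcal{P}(\cdot,\cdot;u^{(\jmath)},v^{(\jmath)},\tau^{(\jmath)})$, and $\mathcal{P}$ vanishes at $(v^{(\jmath)},u^{(\jmath)})$, so minimality yields $\mathcal{L}_{\rho}(\cdots,v^{(\jmath+1)},u^{(\jmath+1)})\le \mathcal{L}_{\rho}(\cdots,v^{(\jmath)},u^{(\jmath)})-\mathcal{P}(v^{(\jmath+1)},u^{(\jmath+1)};\cdots)$; finally $S_\ell^{(\jmath)}\succeq\tau_1 I$ together with the explicit $\tau_1$-term on $u_{n,1}$ lower-bound $\mathcal{P}(v^{(\jmath+1)},u^{(\jmath+1)};\cdots)$ by exactly the $u$- and $v_{n,\ell}$ ($\ell\le L-1$) quadratics appearing in \eqref{eq:ineqjmath}. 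Summing the two contributions gives (a).

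Claims (b)--(d) are then routine. For (b), $\mathcal{L}_{\rho}$ is bounded below on $\Omega_3$: completing the square gives $\langle\xi,c\rangle+\frac{\rho}{2}\|c\|^2\ge-\frac{\|\xi\|^2}{2\rho}$ with $c:=u-\Psi(v)w-Ab$, while $\mathcal{O}\ge 0$ on $\Omega_3$ because there $v-\sigma(u)\ge 0$ and $\beta>0$; monotonicity from (a) then forces convergence. For (c), the bound $\mathcal{L}_{\rho}<\theta$ (guaranteed by the feasible initialization \eqref{eq:initbcd} and the monotone decrease) forces $\mathcal{R}_1(w)$ and $\mathcal{R}_2(v)$ to be bounded, hence $\|w\|,\|v\|$ bounded; then $\sigma(u)\le v$ together with the bound on $\beta\zz(v-\sigma(u))$ bound $u$ from both sides (using $0<\alpha<1$), and the boundedness of the residual $c$ with $A\zz A=N I_{\overline{N}}$ bounds $b$ — exactly the cascade used in Theorem \ref{thm:nonem}. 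For (d), telescoping (a) and invoking (b) shows the right-hand side of \eqref{eq:ineqjmath} is summable, so $\|w^{(\jmath+1)}-w^{(\jmath)}\|,\|u^{(\jmath+1)}-u^{(\jmath)}\|$ and $\|v^{(\jmath+1)}_{n,\ell}-v^{(\jmath)}_{n,\ell}\|$ ($\ell\le L-1$) vanish; the two remaining increments $\|b^{(\jmath+1)}-b^{(\jmath)}\|$ and $\|v^{(\jmath+1)}_{n,L}-v^{(\jmath)}_{n,L}\|$ are recovered from the already-vanishing increments by Lipschitz continuity of the unique closed-form solution maps — e.g. $b^{(\jmath+1)}=\frac{1}{\rho N}A\zz(\rho(u^{(\jmath)}-\Psi(v^{(\jmath)})w^{(\jmath+1)})+\xi)$ depends continuously on inputs whose increments already tend to zero.

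The crux is (e). Boundedness from (c) yields an accumulation point $(w^*,b^*,v^*,u^*)$ along a subsequence. I would record the first-order optimality of each block: the $(w,b)$-step gives $0\in\partial_{(w,b)}\mathcal{L}_{\rho}$ at $(w^{(\jmath+1)},b^{(\jmath+1)},v^{(\jmath)},u^{(\jmath)})$, and the $(v,u)$-step gives $0\in\partial_{(v,u)}[\mathcal{L}_{\rho}+\mathcal{P}]+\mathcal{N}_{\Omega_3}$, represented (via the MFCQ characterization of $\mathcal{N}_{\Omega_3}$ from Lemma \ref{lem:mfcqsys}, valid since $\Omega_3\subset\Omega_2$) through multipliers $\mu^{(\jmath)}\ge 0$ with $(\mu^{(\jmath)})\zz\mathcal{C}(v^{(\jmath+1)},u^{(\jmath+1)})=0$. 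Passing to the limit, (d) makes the proximal gradients $\nabla\mathcal{P}$ vanish and kills the discrepancy between $v^{(\jmath)}$ and $v^{(\jmath+1)}$ in the two conditions, so they merge into $0\in\partial\mathcal{L}_{\rho}(w^*,\dots)+\mathcal{N}_{\Omega_3}(w^*,\dots)$, i.e. a KKT point of \eqref{eq:almsub}. The main obstacle is the limit passage itself: I must first show $\{\mu^{(\jmath)}\}$ is bounded — otherwise no limiting multiplier exists — which I would do by the standard normalization argument, assuming $\|\mu^{(\jmath)}\|\to\infty$, dividing by $\|\mu^{(\jmath)}\|$, and contradicting MFCQ (Lemma \ref{lem:mfcqsys}; equivalently, the gradients of the active constraints $v-u\ge 0$, $v-\alpha u\ge 0$ are linearly independent because $\alpha\neq 1$). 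The second delicate point is the correct handling of the nonsmooth terms $\mathcal{R}_1$ and $-\beta\zz\sigma(u)$ across the limit, which requires the outer semicontinuity of $\partial\mathcal{R}_1$ and $\partial(-\beta\zz\sigma(u))$ (the latter because $\partial\sigma$ jumps from $\{\alpha\}$ or $\{1\}$ to $[\alpha,1]$ at $u_i=0$), so that limits of subgradients remain subgradients at $(w^*,b^*,v^*,u^*)$.
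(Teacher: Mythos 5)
Your proposal is correct and follows essentially the same route as the paper's proof: sufficient decrease from the exact block minimizations together with $S_{\ell}^{(\jmath)}\succeq\tau_1 I$, lower-boundedness of $\mathcal{L}_{\rho}$ by completing the square, the boundedness cascade of Theorem \ref{thm:nonem}, telescoping plus the closed-form solution maps to recover the $b$- and $v_{n,L}$-increments, and the normalization/MFCQ contradiction to bound the multipliers before passing to the limit. The only differences are cosmetic (the order in which $b$ and $u$ are bounded in (c), and the stated source of strong convexity in the $(w,b)$-step).
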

	
	\begin{proof}
		(a) Since 
		we have $S_{\ell}^{(\jmath)}\succeq \tau_1 I_{N_{\ell}+N_{\ell-1}}$ for all $\ell=2,3,\ldots,L$, we obtain that
		\begin{equation*}
			\begin{aligned}
				&\mathcal{P}\left(u^{(\jmath+1)},v^{(\jmath+1)};u^{(\jmath)},v^{(\jmath)}, \tau^{(\jmath)}\right)\\=&\frac{1}{2}\sum_{n=1}^N\sum_{\ell=2}^{L}\left\|\left(\begin{matrix}
					{v}^{(\jmath+1)}_{n,\ell-1}\\ {u}^{(\jmath+1)}_{n,\ell}
				\end{matrix}\right)-\left(\begin{matrix}
					{v}_{n,\ell-1}^{(\jmath)} \\ {u}_{n,\ell}^{(\jmath)}
				\end{matrix}\right)\right\|_{S_{\ell}^{(\jmath)}}^2+\frac{\tau_1}{2}\sum_{n=1}^N \left\|{u}^{(\jmath+1)}_{n,1}-u_{n,1}^{(\jmath)}\right\|^2
				\\
				\geq &\frac{\tau_1}{2}\left\|u^{(\jmath+1)}-u^{(\jmath)}\right\|^2+\frac{\tau_1}{2}\sum_{n=1}^N\sum_{\ell=1}^{L-1}\left\|v_{n,\ell}^{(\jmath+1)}-v_{n,\ell}^{(\jmath)}\right\|^2.
			\end{aligned}
		\end{equation*}
		Together with {the fact that} $({u}^{(\jmath+1)},{v}^{(\jmath+1)})$ being the global minimizer of \eqref{eq:bcdsub1}, we obtain \begin{equation}\label{eq:bcdineq}
			\begin{aligned}
				&\mathcal{L}_{\rho}\left(w^{(\jmath+1)},{b}^{(\jmath+1)},{v}^{(\jmath+1)},{u}^{(\jmath+1)};\xi\right)- \mathcal{L}_{\rho}\left(w^{(\jmath+1)},b^{(\jmath+1)},v^{(\jmath)},u^{(\jmath)};\xi\right)\\\leq&-\frac{\tau_1}{2}\left\|u^{(\jmath+1)}-u^{(\jmath)}\right\|^2-\frac{\tau_1}{2}\sum_{n=1}^N\sum_{\ell=1}^{L-1}\left\|v_{n,\ell}^{(\jmath+1)}-v_{n,\ell}^{(\jmath)}\right\|^2.
			\end{aligned}
		\end{equation}

		Since $\mathcal{R}_1$ is $\lb_w$-strongly convex with respect to $w$ \citep{beck2017first}, we derive from the definition of $\mathcal{L}_{\rho}$ and the updating rule \eqref{eq:bcdsubWb1} that 	
		$$\mathcal{L}_{\rho}\left(w^{(\jmath+1)},b^{(\jmath+1)},v^{(\jmath)},u^{(\jmath)};\xi\right)-\mathcal{L}_{\rho}\left(w^{(\jmath)},b^{(\jmath)},v^{(\jmath)},u^{(\jmath)};\xi\right)
		\leq -\frac{\lb_w}{2} \left\|w^{(\jmath+1)}-w^{(\jmath)}\right\|^2,$$
		which together with {the inequality} \eqref{eq:bcdineq} complete the statement (a).
		
		(b) The statement (a) together with {the inequality} $\mathcal{L}_{\rho}(w_{\mathrm{init}}^{(k)},b_{\mathrm{init}}^{(k)},v_{\mathrm{init}}^{(k)},u_{\mathrm{init}}^{(k)};\xi) \leq \theta$ and the definition of  $\mathcal{L}_{\rho}$, we obtain that
		\begin{equation}\label{eq:boundl}
			-\frac{1}{2\rho}\|\xi\|^2\leq \mathcal{L}_{\rho}\left(w^{(\jmath)},b^{(\jmath)},v^{(\jmath)},u^{(\jmath)};\xi\right)\leq \theta, \,\,\text{ for all }\jmath\in\mathbb{N}.
		\end{equation} Then, the non-increasing sequence $\{\mathcal{L}_{\rho}\left(w^{(\jmath)},b^{(\jmath)},v^{(\jmath)},u^{(\jmath)};\xi\right)\}$ is bounded and hence convergent.
		
		(c) Recall the definition of $\mathcal{L}_{\rho}$ and \eqref{eq:boundl}, we  obtain that $\lb_w\|w\|_{2,1}+\lb_v\|v\|^2\leq \frac{1}{2\rho}\|\xi\|^2+\theta$, then {the sequence} $\{(w^{(\jmath)},v^{(\jmath)})\}$ is bounded. Together with the definition of $\mathcal{L}_{\rho}$ and \eqref{eq:boundl}, it holds that $\{(b^{(\jmath)})\}$ is bounded
		.  Since $\mathcal{O}$ is bounded away from 0,  \eqref{eq:boundl} also yields that
		$$\frac{\rho}{2}\left\| \frac{\xi}{\rho}+u^{(\jmath)}-\Psi(v^{(\jmath)})w^{(\jmath)}-Ab^{(\jmath)}\right\|^2\leq \theta+\frac{1}{2\rho}\|\xi\|^2 $$
		for all $\jmath\in\mathbb{N}_+$, which together with {the boundedness of} $\{(w^{(\jmath)},b^{(\jmath)},v^{(\jmath)})\}$ imply that $\{u^{(\jmath)}\}$ is bounded. Hence, the sequence $\{(w^{(\jmath)},b^{(\jmath)},v^{(\jmath)},u^{(\jmath)})\}$ is bounded, which completes the statement (c).
		
		(d) From the statements (a), (b), (c), and $\tau_1>0$, we obtain that
		\begin{equation}\label{eq:WVubound}
			\lim_{\jmath\rightarrow \infty}\left\|w^{(\jmath+1)}-w^{(\jmath)}\right\|^2=0, \lim_{\jmath\rightarrow \infty}\left\|u^{(\jmath+1)}-u^{(\jmath)}\right\|^2=0, \lim_{\jmath\rightarrow \infty}\sum_{n=1}^N\sum_{\ell=1}^{L-1}\left\|v_{n,\ell}^{(\jmath+1)}-v_{n,\ell}^{(\jmath)}\right\|^2=0.
		\end{equation}
		
		From the updating rule of the $(w,b)$, we obtain from the KKT condition of \eqref{eq:bcdsubWb1} and the definition of $A,\Psi$ that for all $\ell\in[L]$, it holds that
		$$b^{(\jmath+1)}_{\ell}= \frac{1}{N}\sum_{n=1}^N \left(\frac{\xi_{n,\ell}}{\rho}+u_{n,\ell}^{(\jmath)}-W_{\ell}v_{n,\ell-1}\right).$$
		This together with the statement (c) and \eqref{eq:WVubound} imply that
		\begin{equation}
			\label{eq:bbound}
			\lim_{\jmath\rightarrow \infty}\left\|b^{(\jmath+1)}-b^{(\jmath)}\right\|^2=0.
		\end{equation}
		
		From the updating rule of the $(v,u)$, \eqref{eq:updateuv1} and \eqref{eq:updateuv2}, we obtain that $v^{(\jmath+1)}_{n,L}$ has a closed-form associated with $\xi_{n,L}$, $\rho$, $\tau_{L}^{(\jmath)}$, $u^{(\jmath)}_{n,L}$,  ${W}^{(\jmath+1)}_{L}$, $v^{(\jmath)}_{n,L-1}$, and  ${b}^{(\jmath+1)}_{L}$ for all $n\in[N]$. Together with {the facts} \eqref{eq:taudefine} and \eqref{eq:WVubound}, it holds that
		$$
		\lim_{\jmath\rightarrow \infty}\sum_{n=1}^N \left\|v_{n,L}^{(\jmath+1)}-v_{n,L}^{(\jmath)}\right\|^2=0.
		$$
		
		Using this and relations \eqref{eq:WVubound}, \eqref{eq:bbound}, we complete the statement (d).
		
		(e) The statement (c) yields that the sequence $\{(w^{(\jmath)},b^{(\jmath)},v^{(\jmath)},u^{(\jmath)})\}$ has at least one accumulation point. Let $\mathcal{J}$ be a index set of $\{(w^{(\jmath)},b^{(\jmath)},v^{(\jmath)},u^{(\jmath)})\}$ such that
		$$\lim_{\jmath\rightarrow\infty, \jmath\in\mathcal{J}}\left(w^{(\jmath)},{b}^{(\jmath)},v^{(\jmath)},u^{(\jmath)}\right)=(w^*,b^*,v^*,u^*).$$
		From the first-order optimality conditions for the updating schemes in Steps 1-2 of Algorithm \ref{alg:subbcd} and the constraint set $\mathcal{C}$ being separable with respect to $(v,u)$ block  and $(w,b)$ block, there exists $\mu^{(\jmath)}\in\R_+^{2m}$ such that
		\begin{equation}\label{eq:kktsubbcd}
			\left\{
			\begin{aligned}
				&0\in \partial_{(w,b)} \mathcal{L}_{\rho}\left(w^{(\jmath+1)},{b}^{(\jmath+1)},v^{(\jmath)},u^{(\jmath)};\xi\right)\\
				&0\in \partial_{(v,u)} \mathcal{L}_{\rho}\left(w^{(\jmath+1)},{b}^{(\jmath+1)},v^{(\jmath+1)},u^{(\jmath+1)};\xi\right)\\&\quad\quad+\nabla_{(v,u)} \left(\mathcal{P}\left(u^{(\jmath+1)},v^{(\jmath+1)};u^{(\jmath)},v^{(\jmath)}, \tau^{(\jmath)}\right) +\left(\mu^{(\jmath)}\right)\zz\mathcal{C}\left(v^{(\jmath+1)},u^{(\jmath+1)}\right)\right)\\
				&	\left(\mu^{(\jmath)}\right)\zz  \mathcal{C}\left(	v^{(\jmath+1)},u^{(\jmath+1)}\right)=0.
			\end{aligned}\right.
		\end{equation}
		
		Let $r_{\jmath}:=\|\mu^{(\jmath)}\|_{\infty}$. Suppose that $\left\{r_{\jmath}\right\}$ is unbounded.
		Without loss of generality, we assume that as $\jmath\rightarrow\infty, \jmath\in\mathcal{J}$, it holds that
		$
		\frac{\mu^{(\jmath)}}{r_{\jmath}}\rightarrow \bar{\mu}^*.
		$
		It then follows the fact $\mu^{(\jmath)}\geq 0$ for all $\jmath\in\mathbb{N}$ that $\| \bar{\mu}^*\|_{\infty}=1$ and $\bar{\mu}^*\geq 0$.
		
		Dividing by $r_{\jmath}$ and taking the limit $\jmath\rightarrow\infty, \jmath\in\mathcal{J}$ on both sides of \eqref{eq:kktsubbcd}, we obtain $0=\nabla_{(v,u)} (\mu^*)\zz \mathcal{C}( v^*,u^*)$ which results from
		the locally Lipschitz continuity of $\mathcal{L}_{\rho}, \mathcal{C}, \mathcal{P}, \nabla \mathcal{C}$ and the statements (a)--(d). Together with {the equality} $\| \bar{\mu}^*\|_{\infty}=1$ and Lemma \ref{lem:mfcqsys}, it leads to contradiction. Thus, $\left\{r_{\jmath}\right\}$ is bounded as desired. Without loss of generality, we assume that as $\jmath\rightarrow\infty, \jmath\in\mathcal{J}$, it holds that
		\begin{equation}
			\mu^{(\jmath)}\rightarrow \bar{\mu}^*.
		\end{equation}
		Similarly, we can obtain that $\bar{\mu}^*\geq 0$, since $\mu^{(\jmath)}\geq 0$ for all $\jmath\in\mathbb{N}$.
		
		Again, taking the limit $\jmath\rightarrow\infty, \jmath\in\mathcal{J}$  on both sides of \eqref{eq:kktsubbcd}, we finally arrive at \\$(\bar{\mu}^*)\zz  \mathcal{C}( v^*,u^*)=0$ and
		\begin{equation*}
			\begin{aligned}
				&0\in \partial_{(w,b)} \mathcal{L}_{\rho}(w^*,b^*,v^*,u^*;\xi),\,0\in \partial_{(v,u)} \left(\mathcal{L}_{\rho}(w^*,b^*,v^*,u^*;\xi)+ (\bar{\mu}^*)\zz\mathcal{C}( v^*,u^*)\right).
			\end{aligned}
		\end{equation*}
		This completes the proof.
	\end{proof}

	\begin{remark}
		The statement (a) of Theorem \ref{thm:bcd} shows that Algorithm \ref{alg:subbcd}
		yields a monotonic nonincreasing function value sequence $\{\mathcal{L}_{\rho}\left(w^{(\jmath)},b^{(\jmath)},v^{(\jmath)},u^{(\jmath)};\xi\right)\}$ for fixed $\rho$  and $\xi$. Together with the selected initial guess, we can conclude that condition \eqref{eq:almsubcondi1} always holds. Meanwhile, the statement (e) of Theorem \ref{thm:bcd} guarantees an inexact stationarity condition \eqref{eq:almsubcondi2} can hold by certain iterate. Therefore, the inner iteration, Algorithm \ref{alg:subbcd}, {is 
			qualified to be
			Step 1 of} the outer iteration, {namely,} Algorithm \ref{alg:alm}.
	\end{remark}
	
		

	\section{Numerical Experiments}\label{sec:numeri}
	
	In this section, we evaluate the numerical
	performance of IALAM for training
	the sparse leaky ReLU network with group sparsity 
	through comparing with some state-of-the-art SGD-based approaches. All the numerical experiments are conducted under MATLAB R2018b with windows 7 on a desktop with 3.4 GHz Inter Core i7-6700 CPU and 16 GB RAM.
	
	\subsection{Implementation Details}
	
	{\bf{Algorithm parameters.}}
	For our IALAM, we set $\eta_1=0.99$, $\eta_2=\frac{5}{6}$, $\eta_3=0.01$, $\eta_4=\frac{2}{3}$, $\epsilon_0=0.1$, $\rho^{(0)}=1/N$, $\xi^{(0)}=0$, and $\gamma=2L$.
	{It is worthy of mentioning that although $\eta_3=0.01$ does not satisfy the requirement
		in Theorem \ref{thm:bcd} to guarantee the global convergence of
		Algorithm \ref{alg:alm}, this choice always yields better performance than $\eta_3>1$ in practice.}
	In the inner iteration, {subproblem} \eqref{eq:bcdsubWb1}
	is solved by the proximal gradient method
	\citep{dai2005bb}. {
		We set the initial proximal parameter in
		subproblem \eqref{eq:bcdsub1} as $\tau_1=\frac{1}{10N}$,
		and update $\tau_{\ell}^{(\jmath)}$, for all $\jmath=2,3,\ldots,L$, by formulation (54).}

{\bf{Stopping criterion and initial guess.}}
Except for otherwise mentioned, we terminate our algorithm whenever $\epsilon_{k}<10^{-6}$ or $\rho^{(k)}>10^{3} \rho^{(0)}$. 
For all $\ell\in[L]$, the variables $W_{\ell}^{(0)}$ are randomly generated
by $W_{\ell}^{(0)} = \mathrm{randn}(N_{\ell},N_{\ell}-1)/N$, where $\mathrm{randn}(n, p)$
stands for an $n \times p$ randomly generated matrix under the standard Gaussian distribution. 		
Let
{$b^{(0)}=0$}, 
$v^{(0)}_{n,0}=x_n$, $u^{(0)}_{n,\ell}=W_{\ell}^{(0)}v^{(0)}_{n,\ell-1}$ and {$v^{(0)}_{n,\ell}=\sigma(u^{(0)}_{n,\ell})$} for all $n\in[N]$ and $\ell\in[L]$.

{\bf{Algorithms in Comparison.}}
For comparison, we choose a few state-of-the-art
SGD-based approaches, including the Adam~\citep{kingma2014adam}, the Adamax~\citep{kingma2014adam}, the Adadelata~\citep{zeiler2012adadelta}, the Adagrad~\citep{duchi2011adaptive}, the AdagradDecay~\citep{duchi2011adaptive}, and the Vanilla SGD~\citep{cramir1946mathematical} with batch-size (Vanilla SGD (batch)). The MATLAB codes of these SGD-based approaches are downloaded from the SGD Library~\citep{JMLR:v18:17-632}. We also include ProxSGD
\citep{yang2020proxsgd}.
These approaches directly solve problem \eqref{eq:odnn}. All of these algorithms are run under their defaulting settings. The batch-size of these methods is set to $\lceil\sqrt{N}\rceil$. We terminate these methods whenever the epoch (i.e., ``Iteration$\times$batch-size$/N$'') reaches 1000 unless otherwise stated.

{  {\bf{Model parameters (hyperparameters).}}
	
	We introduce the model parameters of problem \eqref{eq:dnn33} in the tests unless otherwise statement, which include $\alpha=0.01$, $\lb_w=\frac{1}{N}$ $\lb_v=\frac{1}{100N}$, and $\beta=\frac{1}{N}e_m$. Specifically, results with various values of constant $\alpha$ and vector $\beta$ are shown in Figures \ref{fig:alphacomparsion} and \ref{fig:perpro}, and Table \ref{tab:betacomparsion}, respectively.
}

{\bf{Test problems.}}
The number of test samplings $N_{\text {test}}$ is set to be $\lceil N/5 \rceil$.

There are two classes of test problems. 
The first class of test problems are generated randomly. We 
construct the training data sets and test data sets with a similar way as that proposed by \cite{cui2020multicomposite}, i.e., 	
$$y_n=\sigma(W_L\sigma({\cdots\sigma(W_1 x_n+b_1)+b_2\cdots})+b_L)+\tilde{y}_n,$$
for all $n\in[N+N_{\text {test}}]$, where $x_n \sim \mathcal{N}(\zeta, \Sigma_{0}^{T} \Sigma_{0})$, $\tilde{y}_n= \epsilon_y \mathrm{randn}(1,1)$. Here, the parameter $\epsilon_y=0.05$ is to control the noise level, $\zeta=\mathrm{randn}(N_0,1)$, and $\Sigma_{0}=\mathrm{randn}(N_0,1)$. 

The second class of test problems is the classification problem on the MNIST \citep{LeCun1998mnist} data set, consisting of 10-classes handwritten digits with the size $28 \times 28$, namely, $N_{0}=784.$ In practice, we randomly pick up data entries from each class of MNIST under uniform distribution. Since there are ten classes in the MNIST, we take $N_L=10$.

\textbf{Output evaluation.}
Finally, we introduce how to evaluate the performance of various approaches. We record the measurements including the training error, the test error, 
the first feasibility violation, the second feasibility violation, and the KKT violation, which are
{denoted} by
$$\text{TrainErr}=\frac{1}{N}\sum_{n=1}^{N} \left\|\sigma(W_L\sigma({\cdots\sigma(W_1 x_n+b_1)+b_2\cdots})+b_L)-y_n\right\|^2,$$
$$\text{TestErr}=\frac{1}{N}\sum_{n=N+1}^{N+N_{\mathrm{test}}} \left\|\sigma(W_L\sigma({\cdots\sigma(W_1 x_n+b_1)+b_2\cdots})+b_L)-y_n\right\|^2,$$
$$\text{FeasVi1}=\frac{1}{N}\sum_{n=1}^N\sum_{\ell=1}^L\left\|v_{n,\ell}-\sigma(u_{n,\ell})\right\|^2, \text{ FeasVi2 }=\frac{1}{N}\sum_{n=1}^N\sum_{\ell=1}^L\left\|u_{n,\ell}-(W_{\ell}v_{n,\ell-1}+b_{\ell})\right\|^2,$$
and
\begin{equation*}
	\text{KKTVi}=
	\operatorname{dist}(0, \partial \mathcal{L}_{\rho^{(k-1)}}(w^{(k)},b^{(k)},v^{(k)},u^{(k)};\xi^{(k-1)})+\mathcal{N}_{\Omega_3}(w^{(k)},b^{(k)},v^{(k)},u^{(k)})) +\frac{1}{2}\text{FeasVi2},
\end{equation*}
respectively, and the average feasibility violation FeasVi=(FeasVi1+FeasVi2)$/\overline{N}$. Time is the CPU time in (minutes: seconds).  For the classification task, we also record  the classification accuracy for the training data, ``Accuracy'', and test data, ``TestAcc'', respectively.  

\subsection{Numerical Performance of IALAM}

In this subsection, we investigate the numerical performance of IALAM in solving problems with both randomly generated data sets and MNIST.

\subsubsection{Solving Problems with Different Layers}

We test IALAM in solving problem \eqref{eq:dnn33} with different layers. Figure \ref{fig:normalialam} shows the  performance of IALAM in solving problem with synthetic data set, where we set $N=500$, $L=4$, $N_0=5$, $N_1=4$, $N_2=4$, $N_3=3$ and $N_4=1$.  We can learn from Figure \ref{fig:normalialam} that (i) the training error and the test error decrease in the same order; (ii) the feasibility violations and KKT violation reduce {oscillatorily to zero.} 

Next, we demonstrate the numerical behavior of IALAM in solving problems with the MNIST data set, where $N=60000$, $N_{\text {test}}=10000$ 
and the number of hidden layers up to four (averaged over 100 simulations). 
We can learn from Table \ref{tab:mnistialam} and Table \ref{tab:mnistialam-normal} that IALAM
works well in dealing with classification data set MNIST with different layers. 

\begin{figure}[H]	
	\centering
	\setcounter{subfigure}{0}
	\subfloat[TrainErr/TestErr]{\includegraphics[width=50mm]{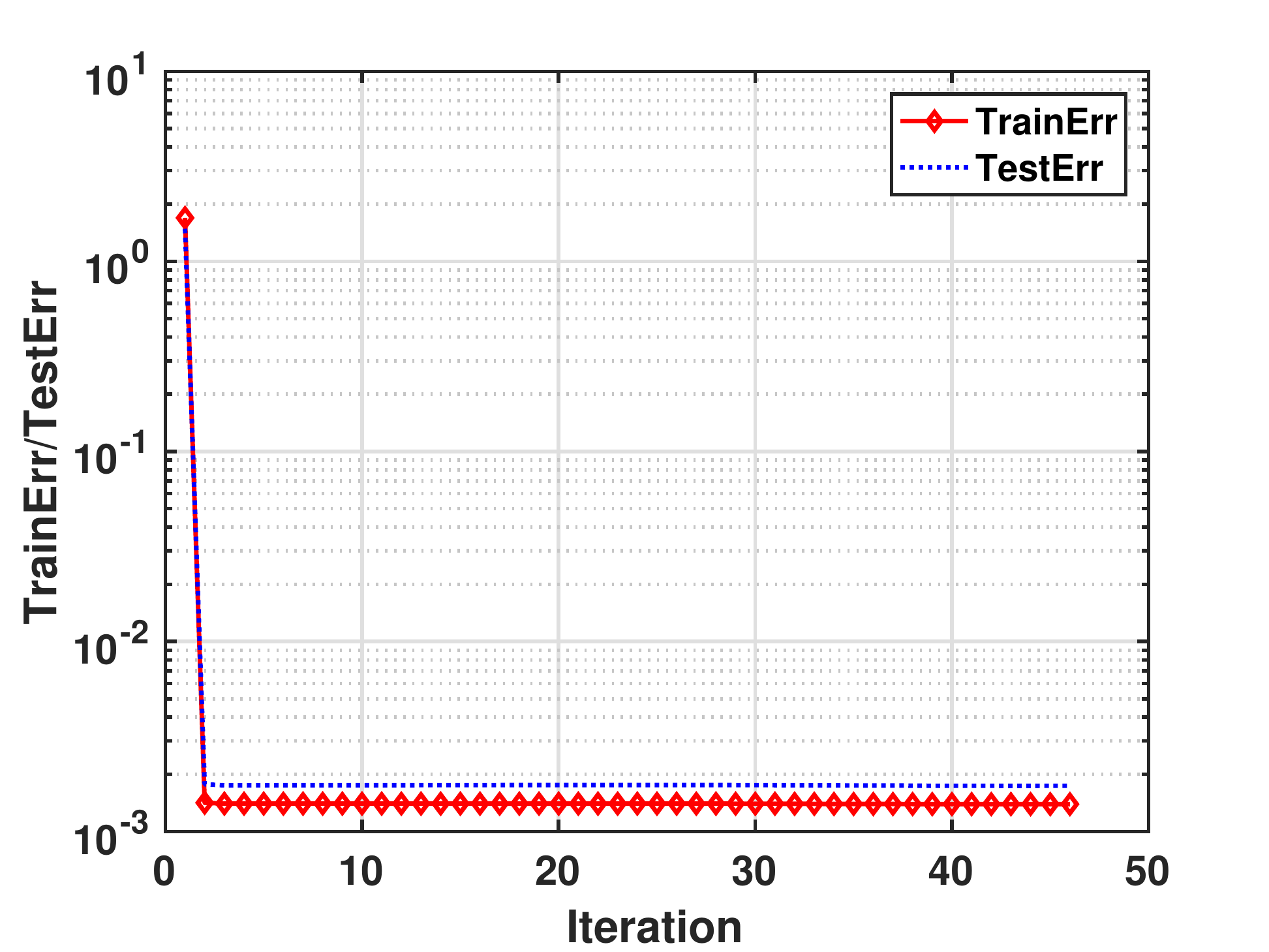}}
	\subfloat[FeasVi1/FeasVi2]{\includegraphics[width=50mm]{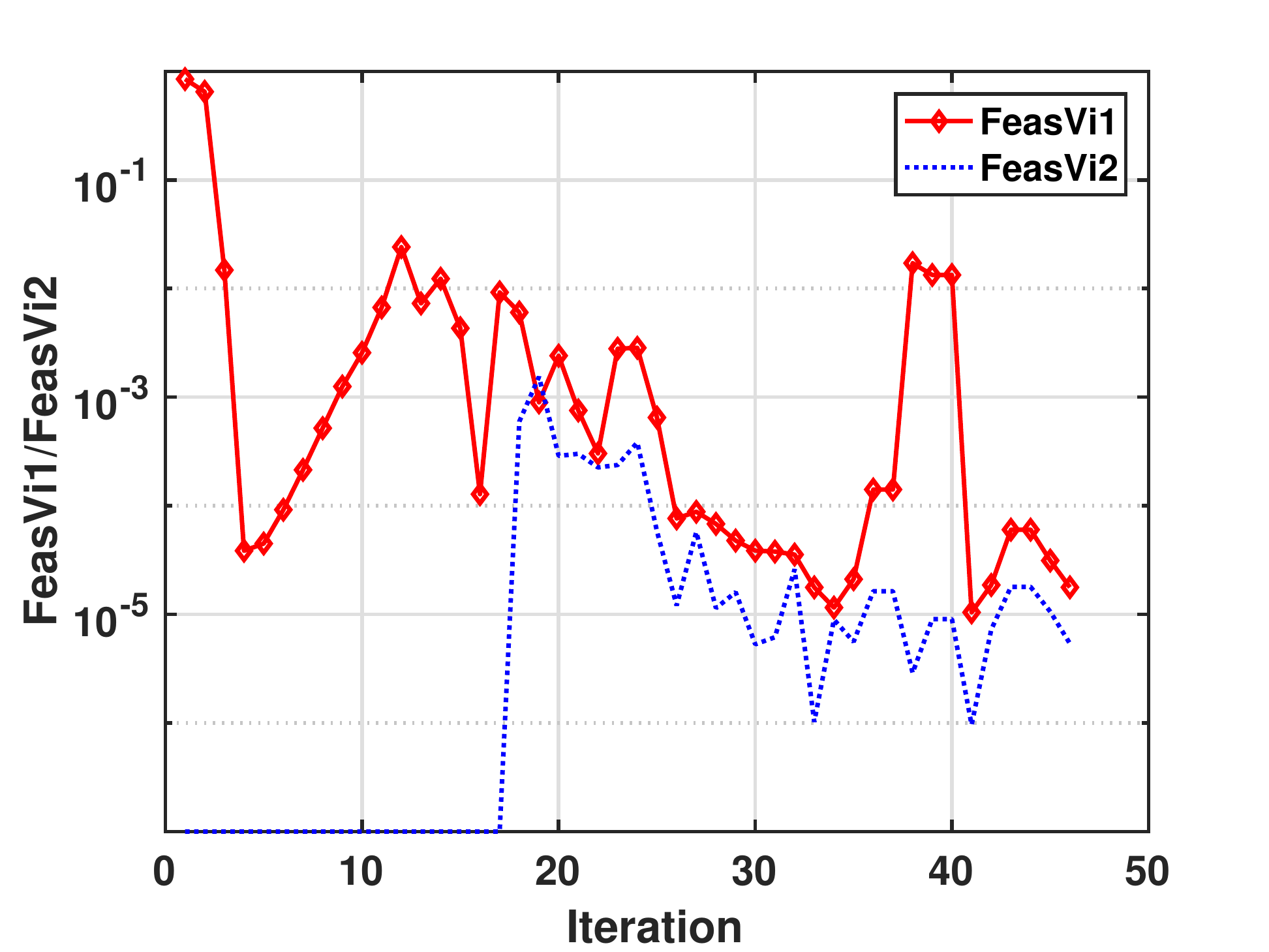}}
	\subfloat[KKTVi]{\includegraphics[width=50mm]{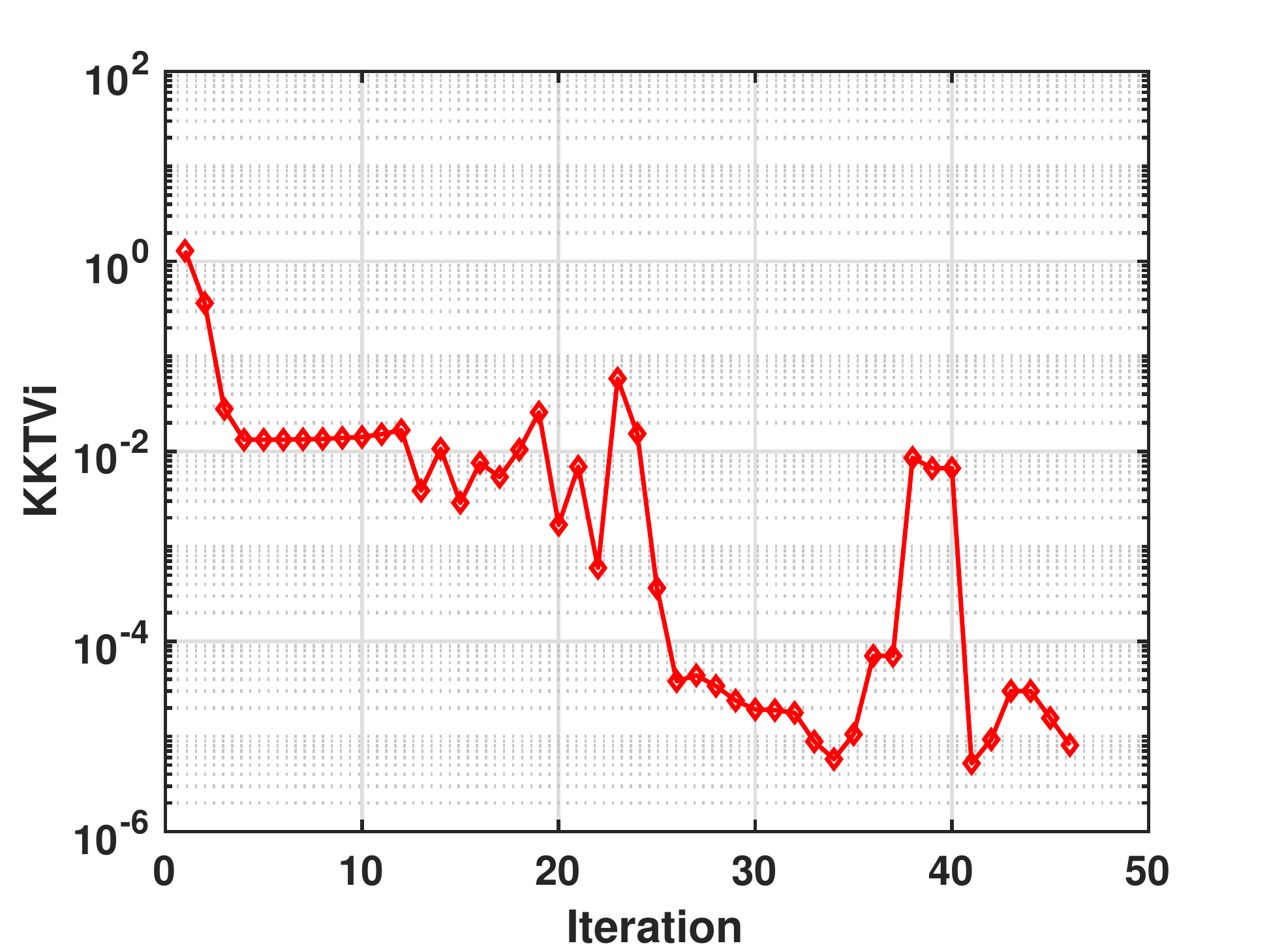}}
	\caption{Algorithm performance of IALAM on the synthetic data set.}\label{fig:normalialam}
\end{figure}

\begin{table}[htbp]
	\setlength\tabcolsep{3.5pt}
	\centering
	\caption{Numerical Results of IALAM with up to four layers.}\label{tab:mnistialam}
	\begin{tabular}{ccc|ccccccccc}
		$N_1$ & $N_2$ & $N_3$ & Iter & Time & FeasVi1 & FeasVi2 & TrainErr & Accuracy &  TestAcc  \\
		\hline
		500 & --&  --  &  138 & 47:06 & 2.00e-5 & 9.95e-9 & (3.98$\pm$0.24)e-2 & 0.981$\pm$0.002 &0.974$\pm$0.002\\
		200 & 100& -- & 152 & 21:58 & 6.97e-5 & 4.06e-5& (3.35$\pm$0.41)e-2 & 0.980$\pm$0.004 &0.976$\pm$0.004 \\					
		500 & 200& --  &  125 & 21:33 & 1.07e-4 & 1.97e-5  & (1.94$\pm$0.62)e-2 & 0.989$\pm$0.003 &0.983$\pm$0.003 \\					
		100 & 100& 100 & 109 & 6:37 & 1.19e-4 & 7.49e-5 & (5.23$\pm$0.86)e-2 & 0.947$\pm$0.006   &0.943$\pm$0.005 \\
		200 & 400& 200 &  112 & 20:10 & 8.68e-5 & 5.66e-5  & (4.42$\pm$1.27)e-2 & 0.960$\pm$0.010 & 0.959$\pm$0.008 \\
		800 & 400& 200 &  95 & 31:39 & 9.82e-5 & 3.74e-5 & (5.21$\pm$1.31)e-2 & 0.960$\pm$0.012 &0.958$\pm$0.008 \\	
	\end{tabular}
\end{table}

\begin{table}[htbp]
	\setlength\tabcolsep{4mm}
	\centering
	\caption{Numerical Results of IALAM with up to four layers among 100 times.}\label{tab:mnistialam-normal}
	\begin{tabular}{c|ccc|ccc|ccc|}
		& $N_1$ & $N_2$ & $N_3$ & $N_1$ & $N_2$ & $N_3$& $N_1$ & $N_2$ & $N_3$\\
		& 800 & 400& 200 & 500 & 200& --  &	500 & --&  --  \\
		\hline
		FeasVi1 & \multicolumn{3}{|c|}{[7.89e-7, 8.78e-4]}  &  \multicolumn{3}{|c|}{[1.67e-6, 3.09e-4] } &  \multicolumn{3}{|c|}{ [7.32e-7, 1.25e-4]}   \\
		FeasVi2 & \multicolumn{3}{|c|}{[1.06e-8, 4.29e-4]} &  \multicolumn{3}{|c|}{[3.86e-9, 1.05e-4]}&  \multicolumn{3}{|c|}{[7.75e-9, 1.87e-8]}\\
		FeasVi& \multicolumn{3}{|c|}{[6.67e-10, 9.33e-7] } &  \multicolumn{3}{|c|}{[1.20e-9, 9.42e-5]}&  \multicolumn{3}{|c|}{[5.30e-10, 8.90e-8]}\\
		TrainErr& \multicolumn{3}{|c|}{ [1.94e-2, 6.63e-2] } &  \multicolumn{3}{|c|}{[1.10e-2, 3.97e-2]}&  \multicolumn{3}{|c|}{[3.60e-2, 4.61e-2] }\\
		Accuracy& \multicolumn{3}{|c|}{ [0.946, 0.986] } &  \multicolumn{3}{|c|}{[0.980, 0.996]}&  \multicolumn{3}{|c|}{[0.976, 0.985]}\\
		TestAcc& \multicolumn{3}{|c|}{[0.943, 0.982]} &  \multicolumn{3}{|c|}{[0.978, 0.990]}&  \multicolumn{3}{|c|}{[0.969, 0.979]} \\
	\end{tabular}
\end{table}

\subsubsection{Investigating the Model Parameters in Sparse leaky ReLU Network}

In this subsection, we first study the numerical performance of IALAM in solving
problem \eqref{eq:dnn33} with {various leaky ReLU parameters $\alpha$}.
Our test is based on the MNIST data set with $N=60000$ and $N_{\mathrm{test}}=10000$ and a fixed initialization point. 
We can learn from Figure \ref{fig:alphacomparsion} that (i) IALAM can be extended to {training the ReLU network,}
i.e. $\alpha=0$;  (ii) a small positive $\alpha$ often leads to better performance than $\alpha=0$, but {further increasing of $\alpha$ yields worse and worse performance.}

%


%

\begin{figure}[htbp]	
	\centering
	\setcounter{subfigure}{0}
	\subfloat[TrainErr]{\includegraphics[width=50mm]{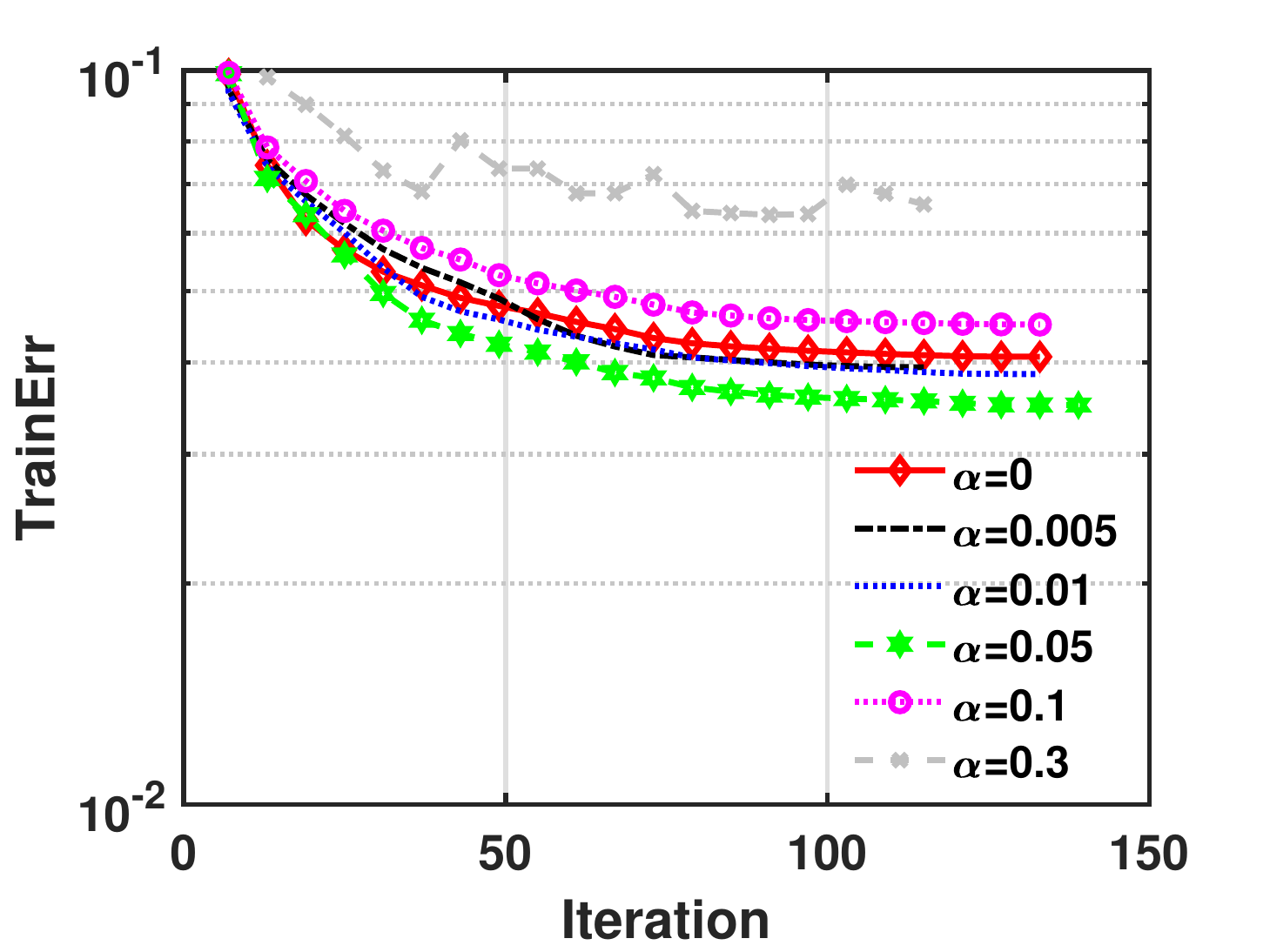}}
	\subfloat[Accuracy]{\includegraphics[width=50mm]{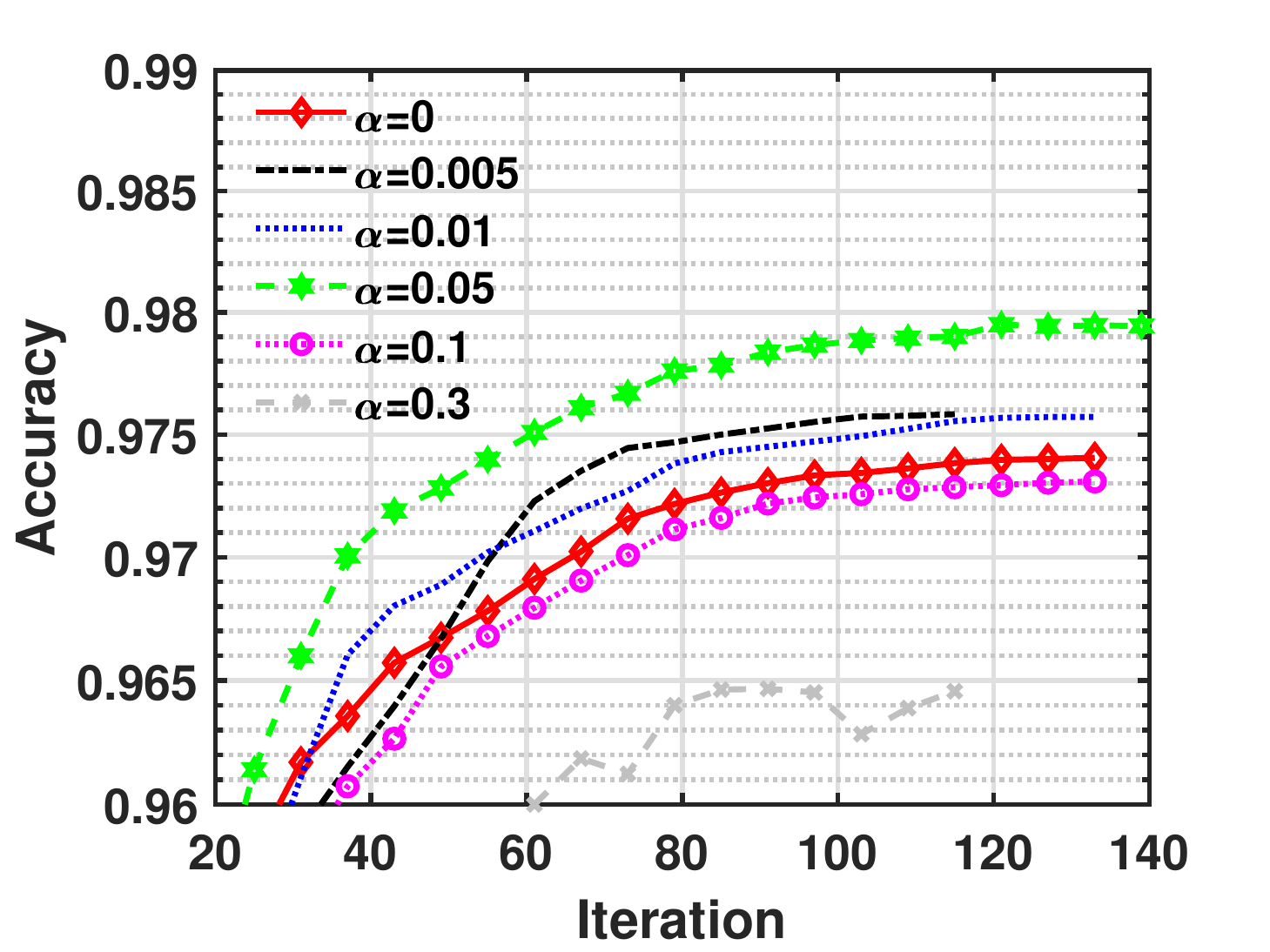}}
	\subfloat[TestAcc]{\includegraphics[width=50mm]{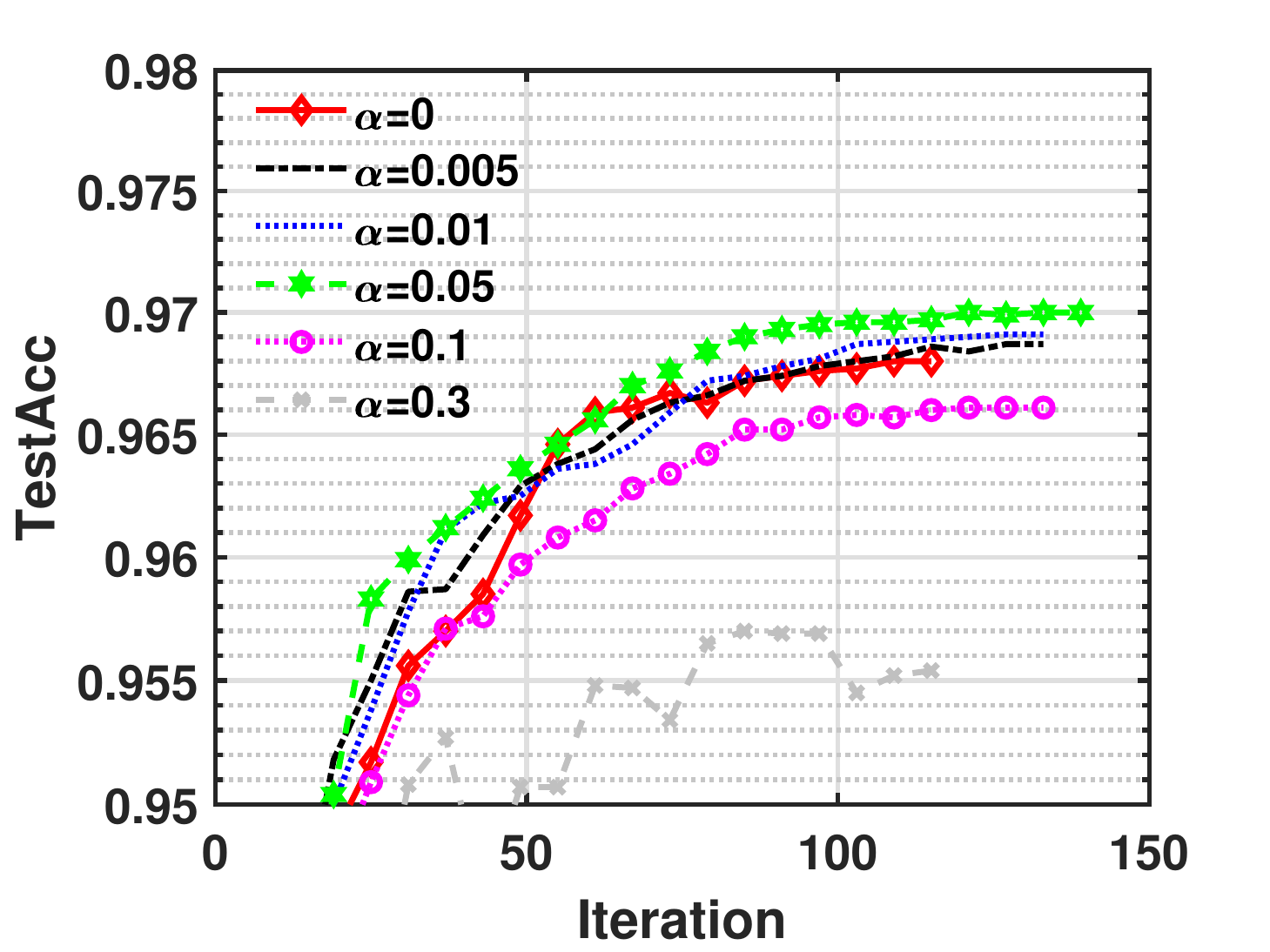}}\\
	\subfloat[TrainErr]{\includegraphics[width=50mm]{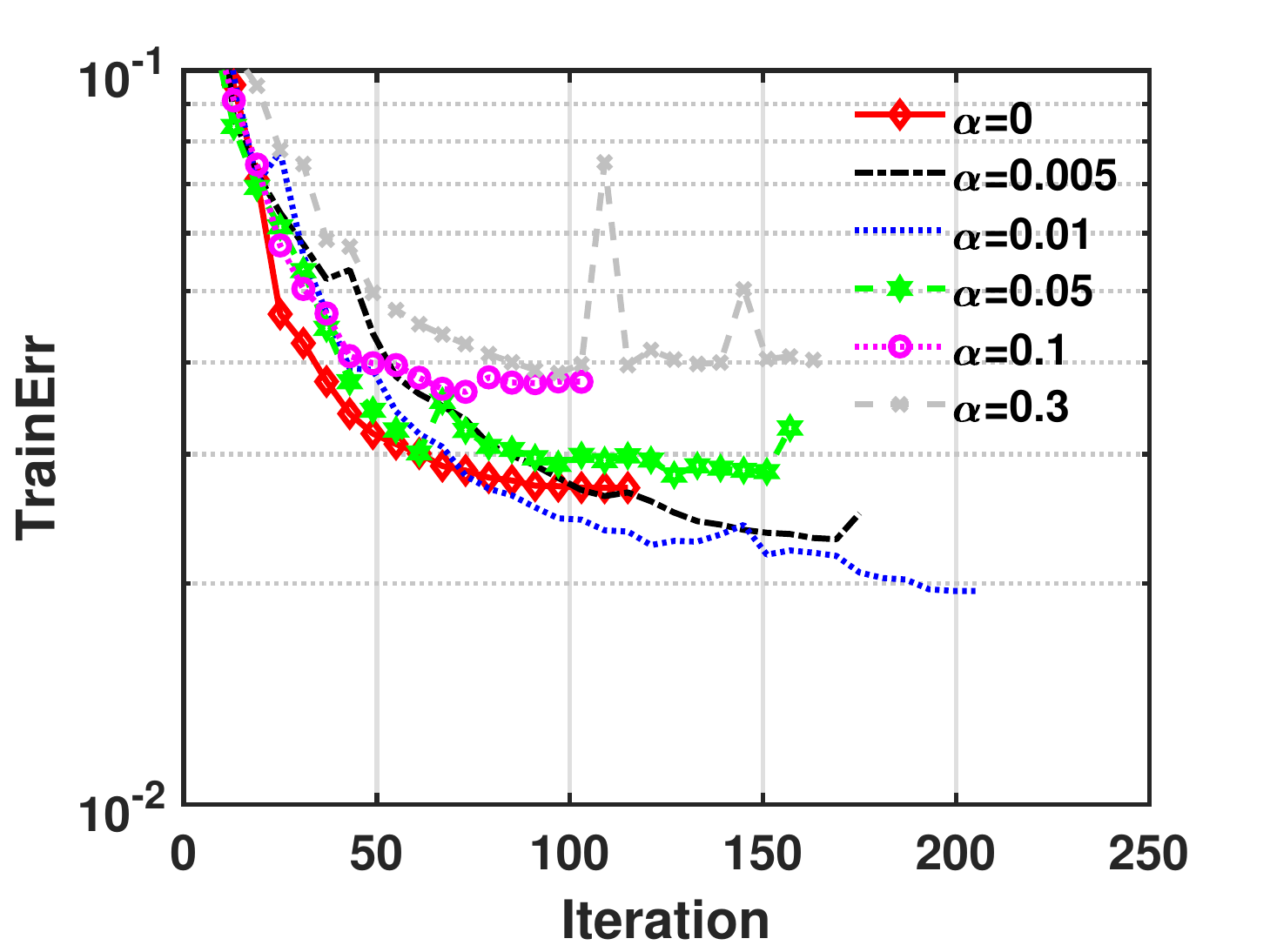}}
	\subfloat[Accuracy]{\includegraphics[width=50mm]{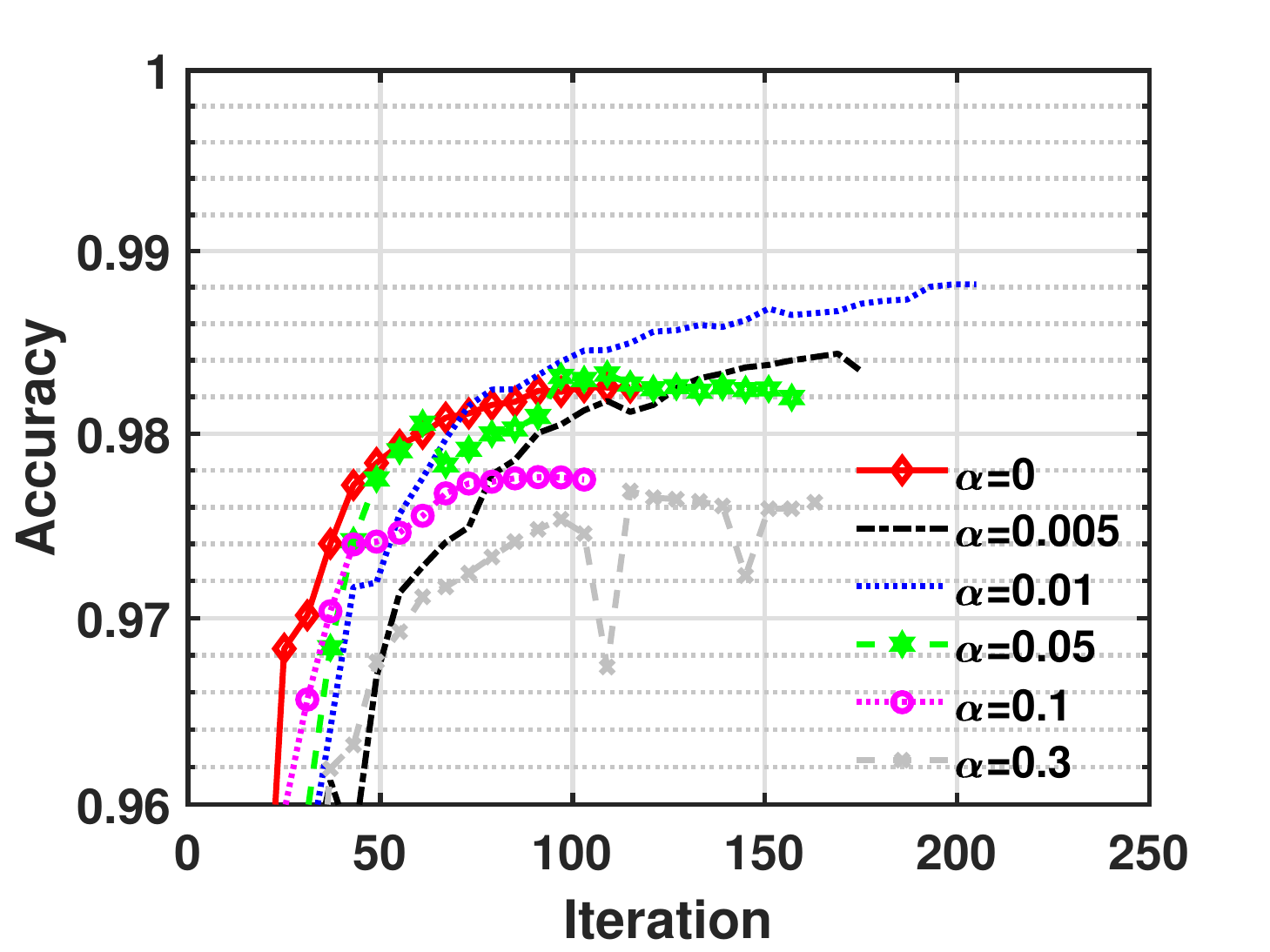}}				
	\subfloat[TestAcc]{\includegraphics[width=50mm]{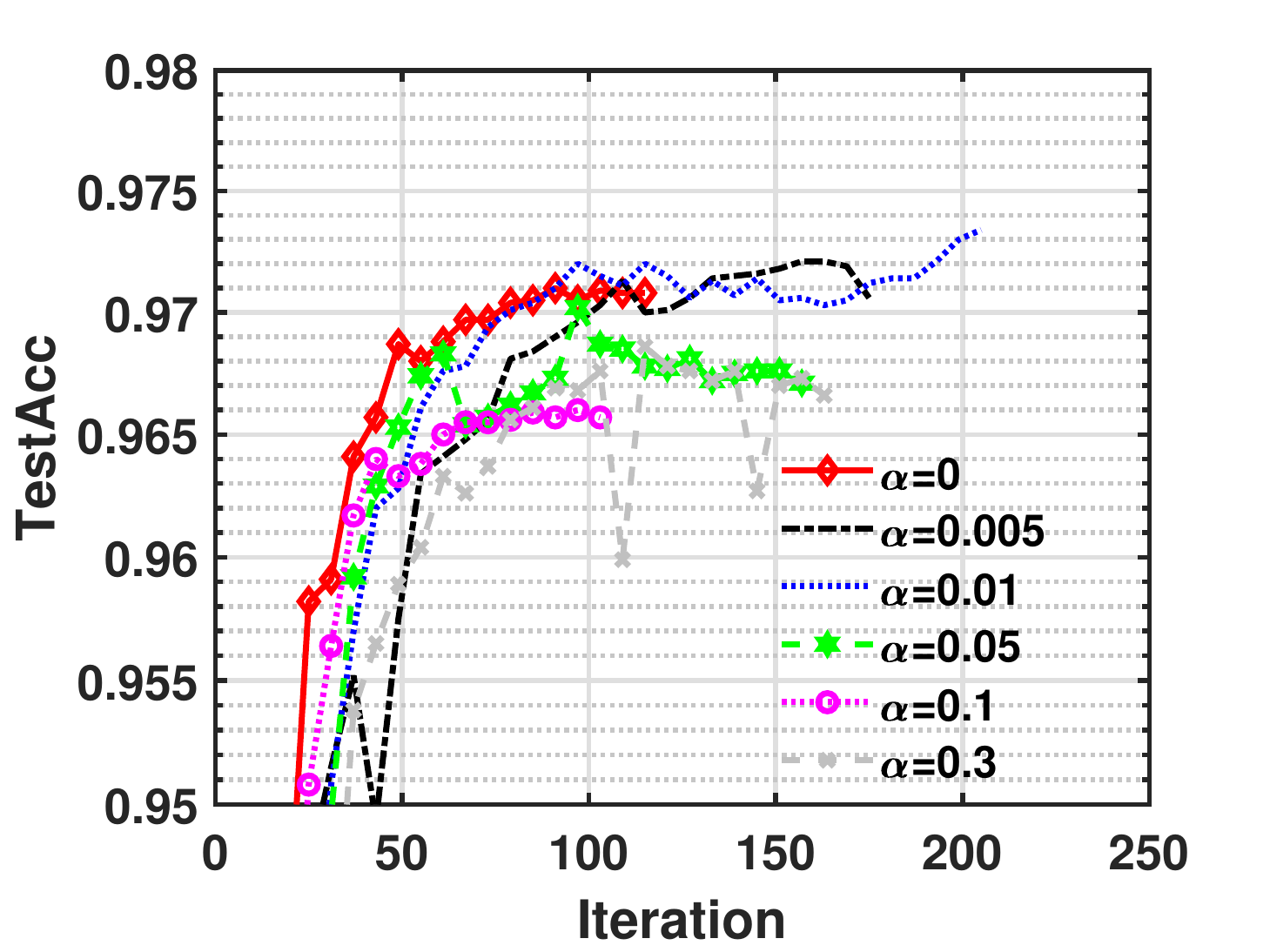}}\\
	\subfloat[TrainErr]{\includegraphics[width=50mm]{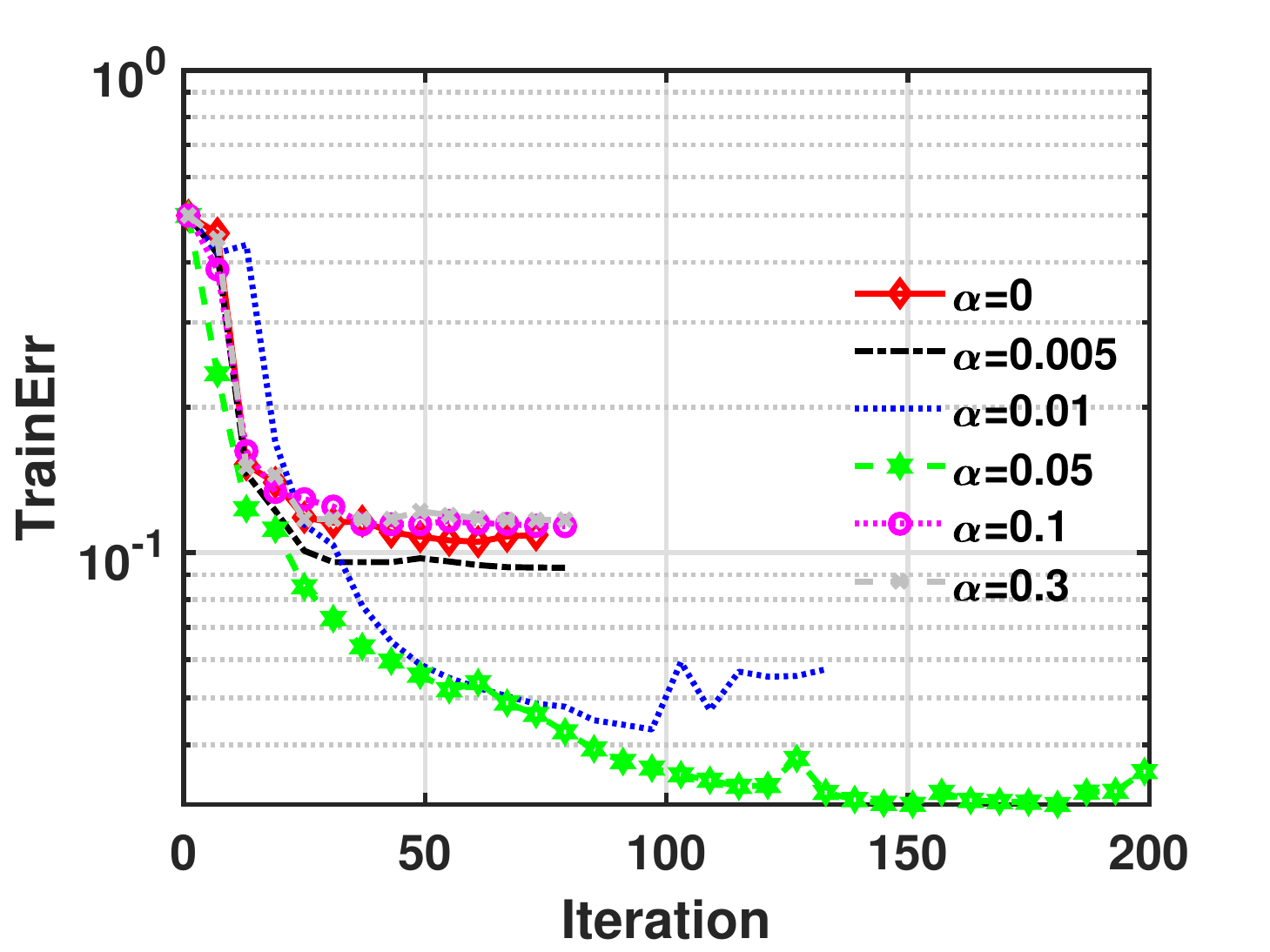}}
	\subfloat[Accuracy]{\includegraphics[width=50mm]{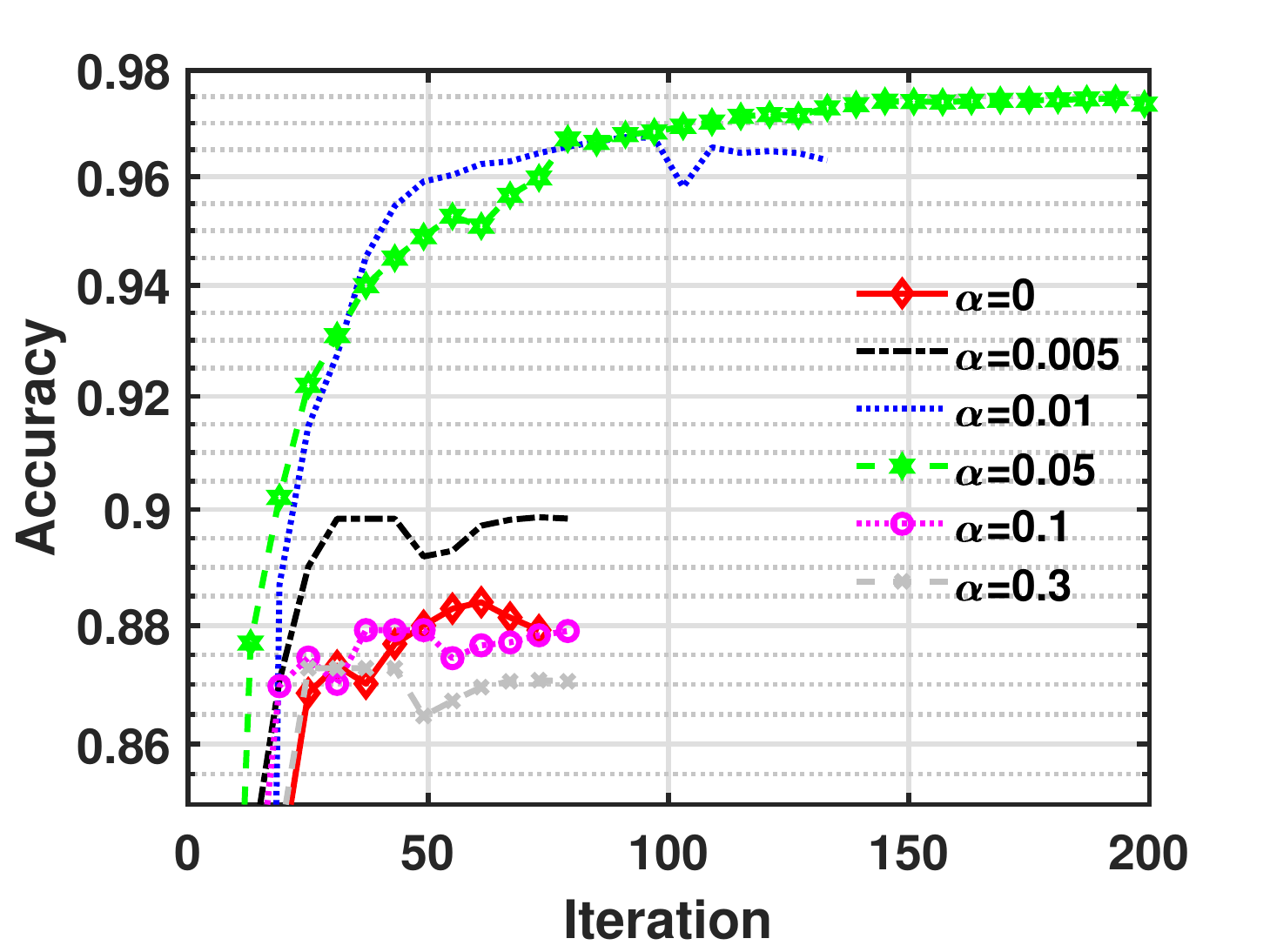}}
	\subfloat[TestAcc]{\includegraphics[width=50mm]{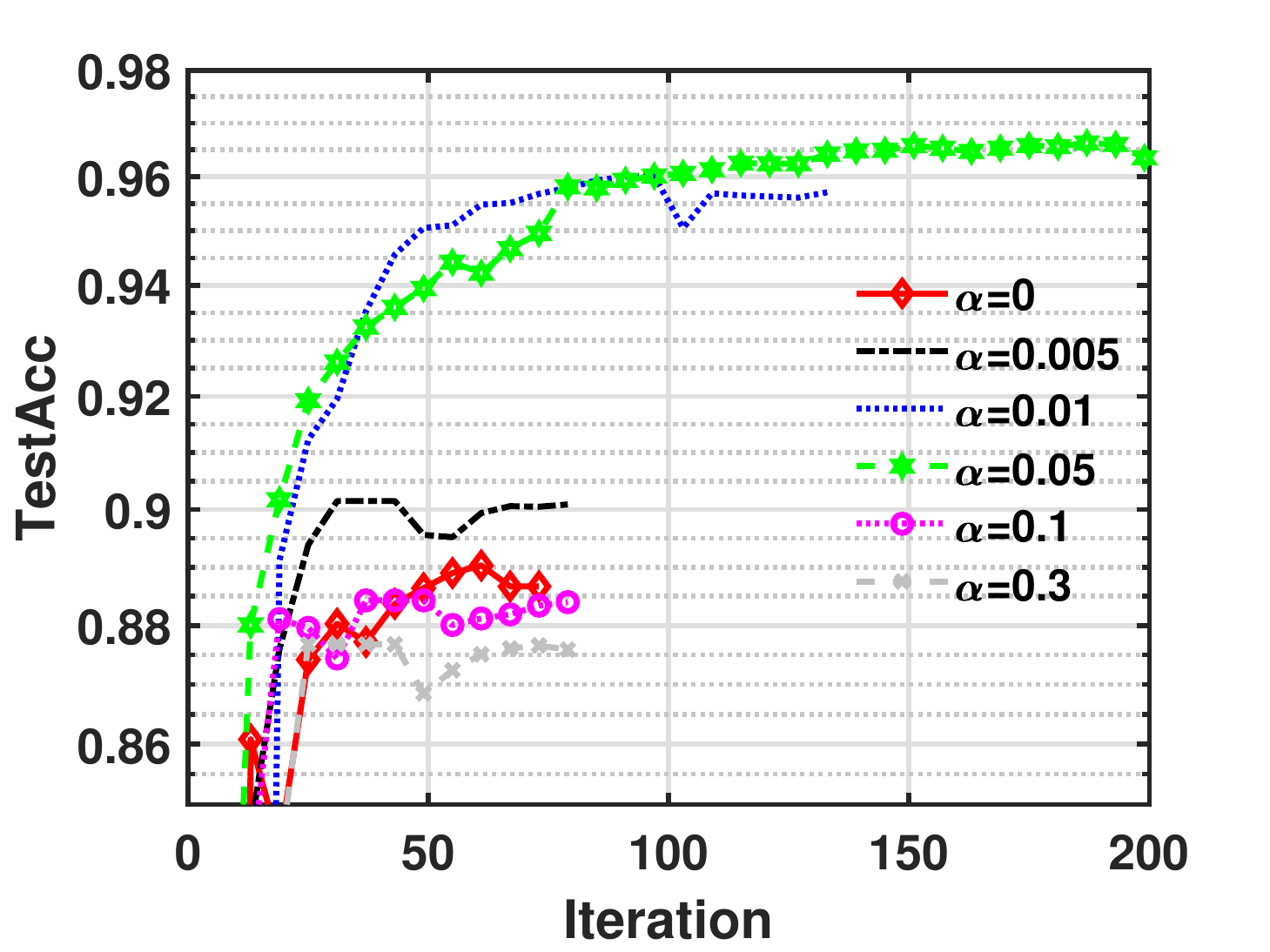}}\\
	\caption{Comparisons among IALAM with $N=60000$, different networks, $\alpha$ and (a)--(c): $N_1=500, L=2$; (d)--(f): $N_1=200,N_2=100, L=3$; (g)--(i): $N_1=100$, $N_2=100$, $N_3=100$,  $L=4$.}
	\label{fig:alphacomparsion}
	
\end{figure}

{ Then, we study the numerical performance of IALAM in solving
	problem \eqref{eq:dnn33} with different penalty parameters $\beta:=\bar{\beta} e_m$, $\bar{\beta}>0$ and a fixed initialization point.
	Our test is also based on the MNIST data set with $N=60000$ and $N_{\mathrm{test}}=10000$. 
	We can learn from Table \ref{tab:betacomparsion}  that (a) the bigger $\bar{\beta}$ always leads to slower convergence; (b) $\bar{\beta}=\frac{1}{N}$ performs the best among $\{1/N,1/10N,10/N,100/N\}$.} Hence, {we choose $\frac{1}{N} e_m$ as the default value of penalty parameter $\beta$ in IALAM.}

\begin{table}[H]
	\setlength\tabcolsep{3.5pt}
	\centering
	\caption{Comparisons among IALAM with vector ${\beta}=\bar{\beta} e_m$.}\label{tab:betacomparsion}
	\begin{tabular}{cccc|cccccccc}
		$N_1$ & $N_2$ & $N_3$ & $\bar{\beta}$ & Iter & FeasVi1 & FeasVi2 & FeasVi & TrainErr & Accuracy &  TestAcc  \\
		\hline
		500 & -- & -- & 1/10N & 121 & 7.64e-6 & 1.51e-11& 1.50e-8& 3.66e-3 & 0.975 &0.970\\
		500 & -- & -- & 1/N & 147  & 6.02e-9 & 8.89e-9& 2.92e-11& 3.88e-3 & 0.981 &0.974\\
		500 & --& -- & 10/N & 168 & 9.05e-9 & 4.80e-14& 1.77e-11& 3.71e-3 & 0.977 &0.969\\
		500 & --& -- & 100/N & 155 & 1.21e-8 & 0& 2.37e-11& 3.91e-3  & 0.975 &0.967\\
		\hline
		200 & 100 & -- & 1/10N & 113  & 7.38e-5 & 6.52e-5& 4.48e-7& 3.43e-2 & 0.970 & 0.961 \\		
		200 & 100 & -- & 1/N & 146  & 6.97e-5 & 4.06e-5& 3.56e-7& 2.76e-2 & 0.984 &0.969 \\
		200 & 100 & -- & 10/N & 133 & 4.21e-6 & 5.99e-11& 1.36e-8& 2.98e-2 & 0.980 &0.968 \\
		200 & 100 & -- & 100/N & 149 & 6.66e-6 & 1.61e-12& 2.15e-8& 2.80e-2 & 0.981 &0.969 \\
		\hline
		100 & 100 &100 & 1/10N & 88  & 9.53e-4 & 4.53e-4& 4.54e-6& 1.95e-1 & 0.772 & 0.776 \\		
		100 & 100& 100 & 1/N & 135 & 1.73e-4 & 7.39e-5& 7.96e-7& 4.04e-2 & 0.959 &0.955 \\
		100 & 100 &100 & 10/N & 87 & 3.94e-5 & 0 & 1.27e-7& 6.63e-2 & 0.935 &0.927 \\
		100 & 100 &100 & 100/N &  99 & 9.10e-6 & 0& 2.94e-8& 6.60e-2 & 0.934 &0.932 \\
	\end{tabular}
\end{table}



%
	

\subsection{Comparisons with the State-of-the-art Approaches}

In this subsection, we compare IALAM with
the existing SGD-based approaches including ProxSGD in solving problem \eqref{eq:dnn} through different ways.

{\subsubsection{Testing on Synthetic Data Sets}
	
	The synthetic data sets are generated with $N=500$, $N_0=5$ and $N_L=1$.
	We compare our IALAM with vanilla SGD, Adam, Adammax, AdaGrad, AdaGradDecay
	and Adadelta. We depict the ``TrainErr" and the ``TestErr'' with the x-axis varying on CPU time.
	We can learn from Figure \ref{fig:syncompare}  that (i) IALAM converges faster than the other approaches; (ii) IALAM can always reach comparable TrainErr and TestErr with the other approaches.  }

\begin{figure}[htbp]	
	
	\centering
	\setcounter{subfigure}{0}
	\subfloat[$L=2$, $N_1=10$]{\includegraphics[width=50mm]{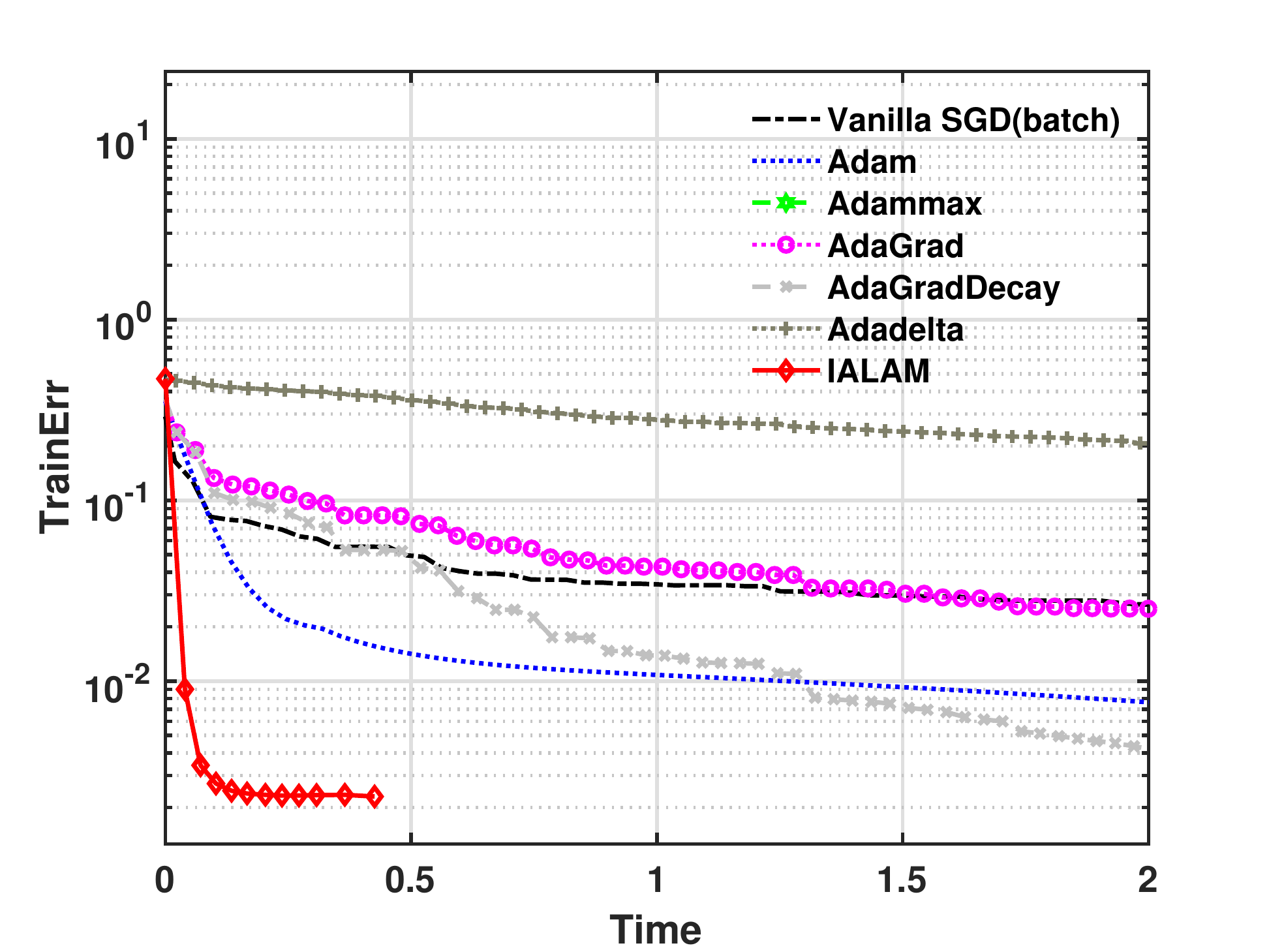}}
	\subfloat[$L=3$, $N_1=N_2=5$]{\includegraphics[width=50mm]{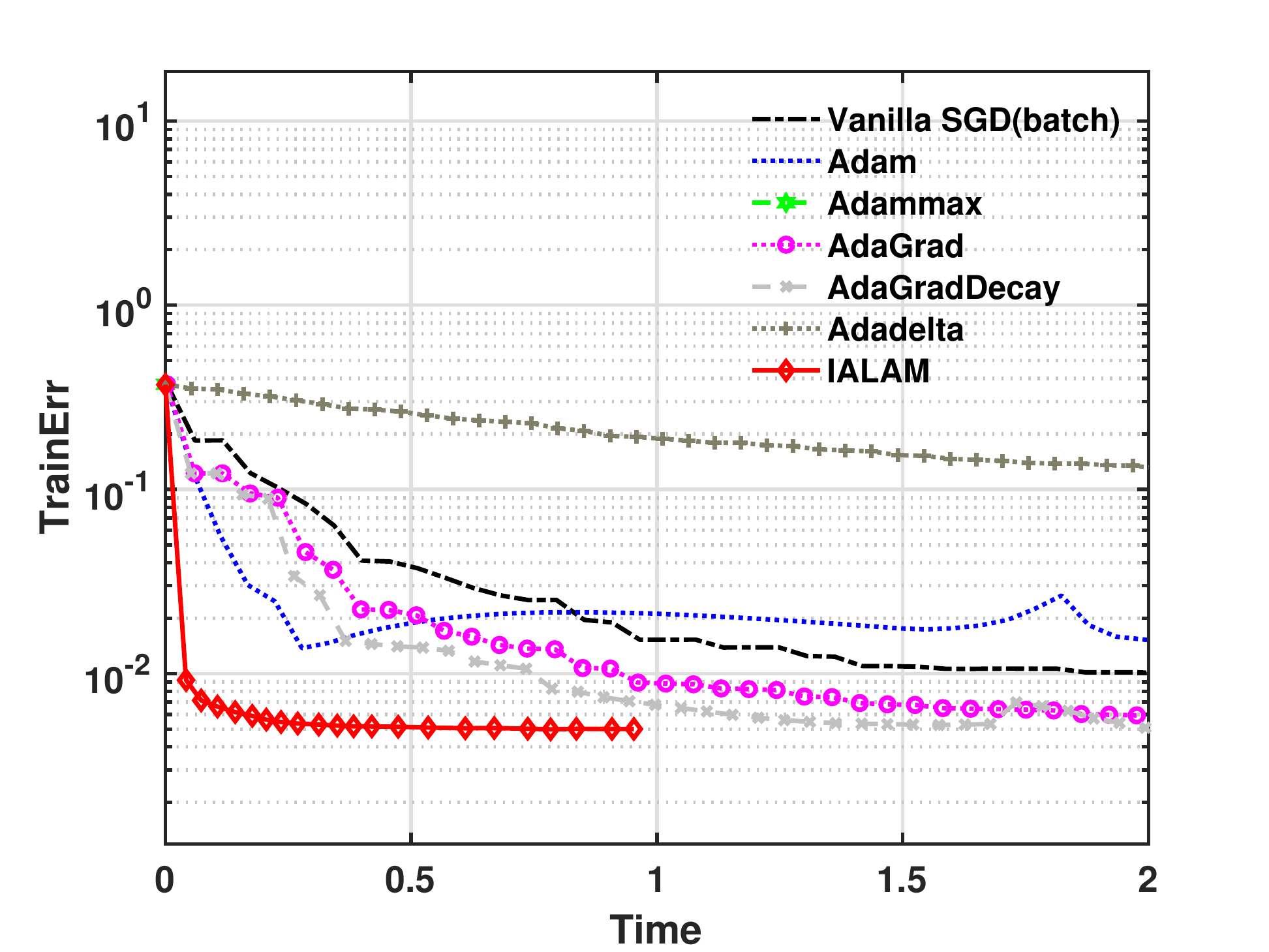}}
	\subfloat[$L=4$, $N_1=4, N_2=N_3=3$]{\includegraphics[width=50mm]{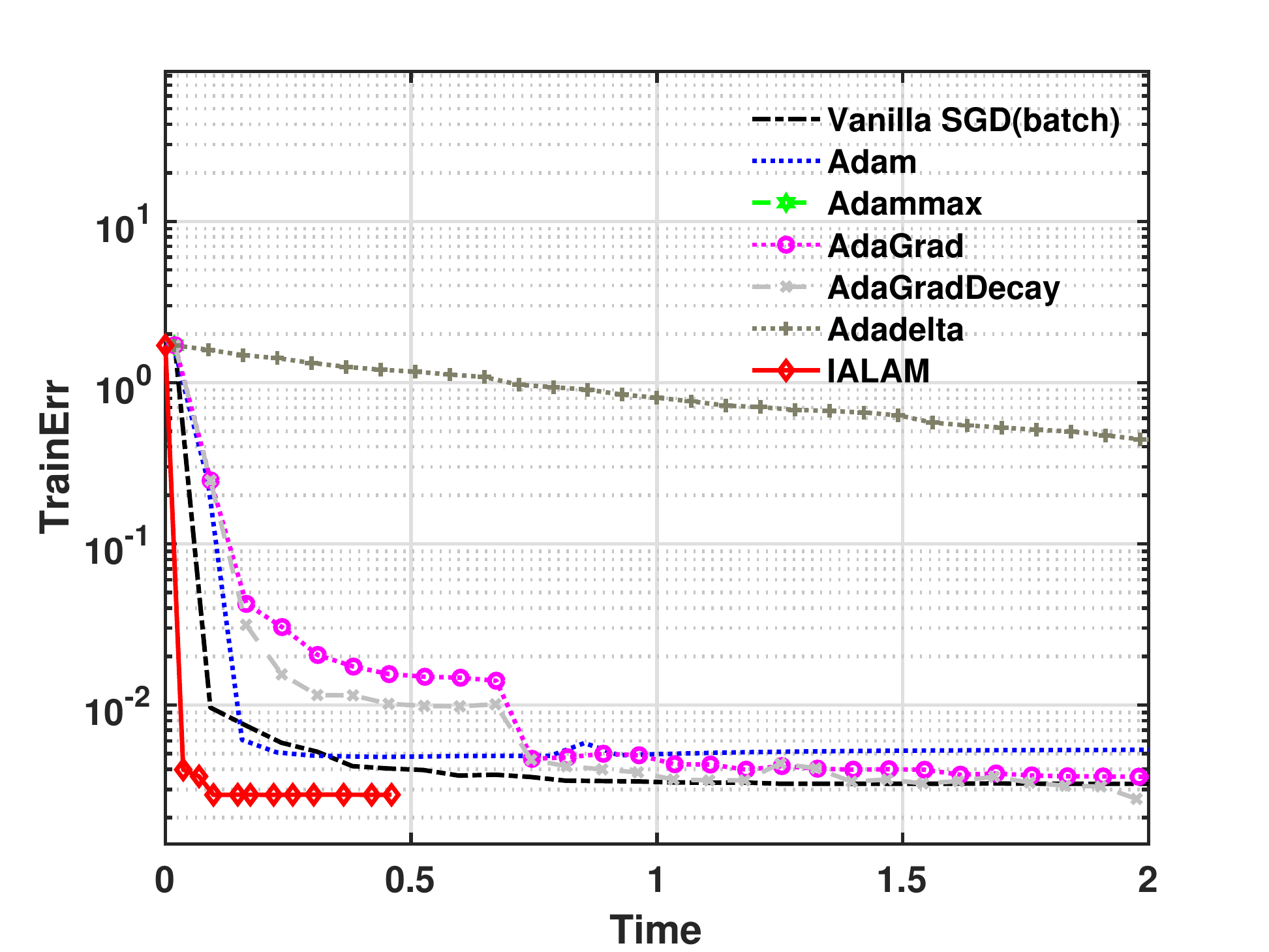}}\\
	\subfloat[$L=2$, $N_1=10$]{\includegraphics[width=50mm]{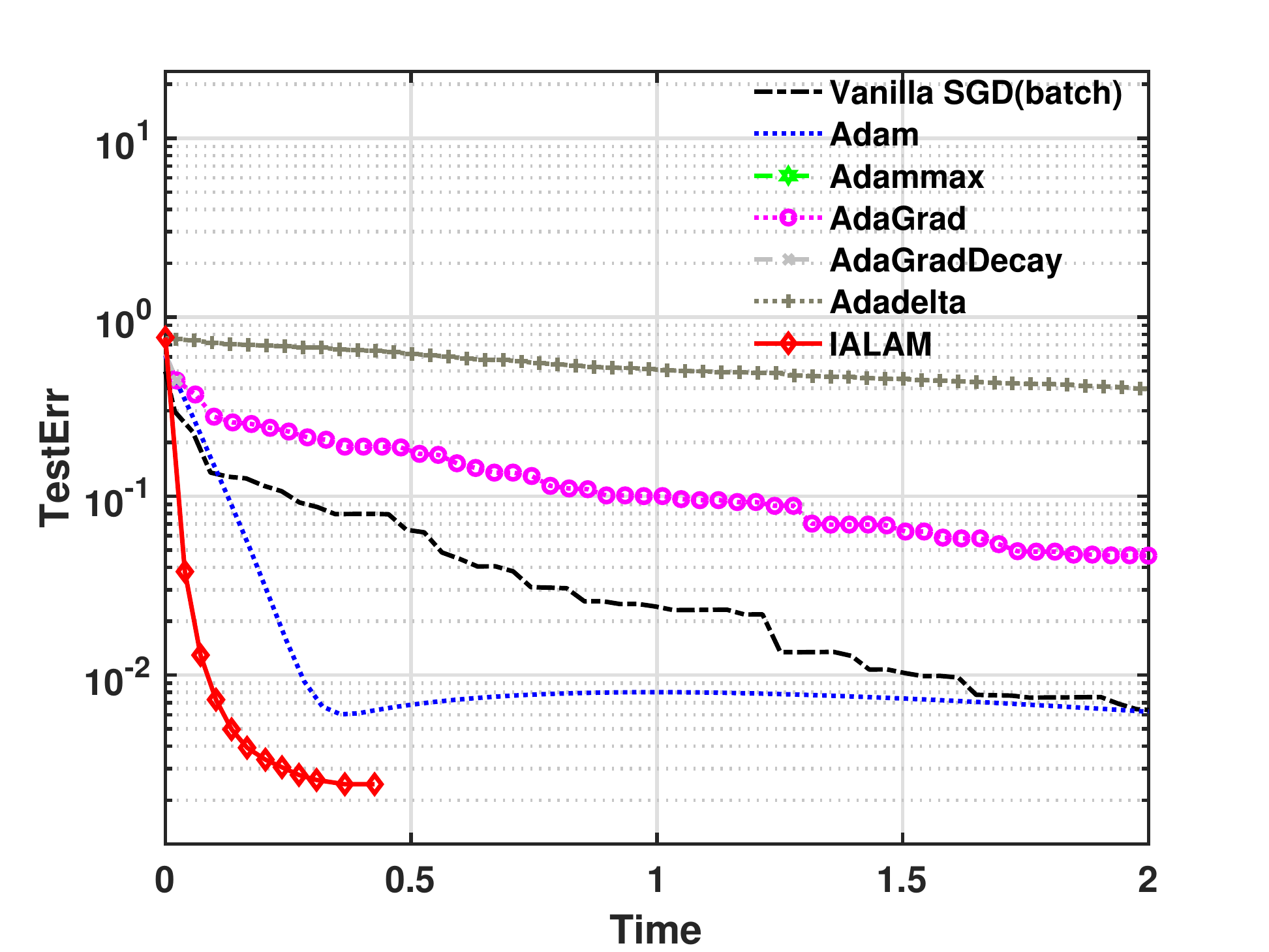}}
	\subfloat[$L=3$, $N_1=N_2=5$]{\includegraphics[width=50mm]{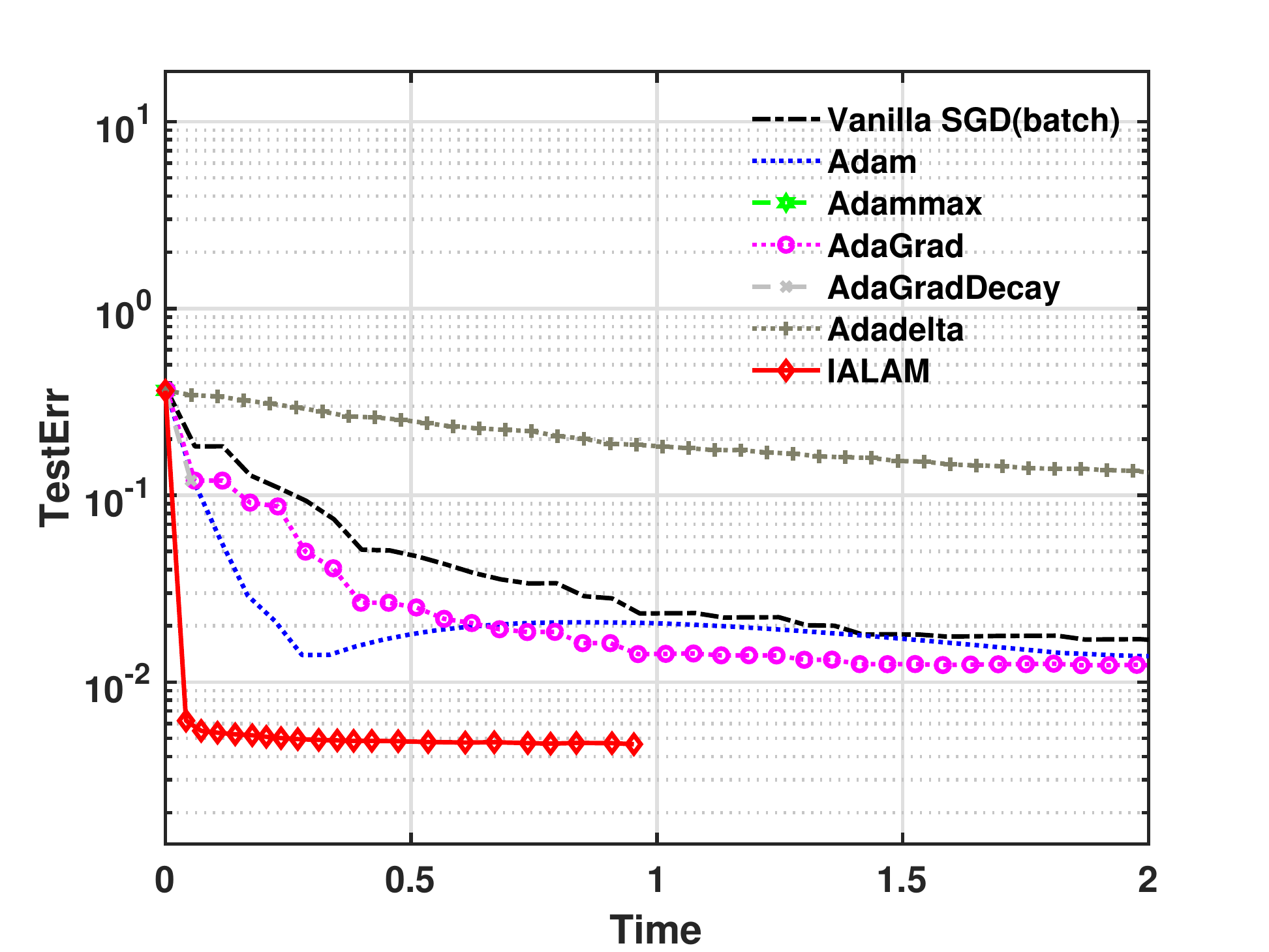}}
	\subfloat[$L=4$, $N_1=4, N_2=N_3=3$]{\includegraphics[width=50mm]{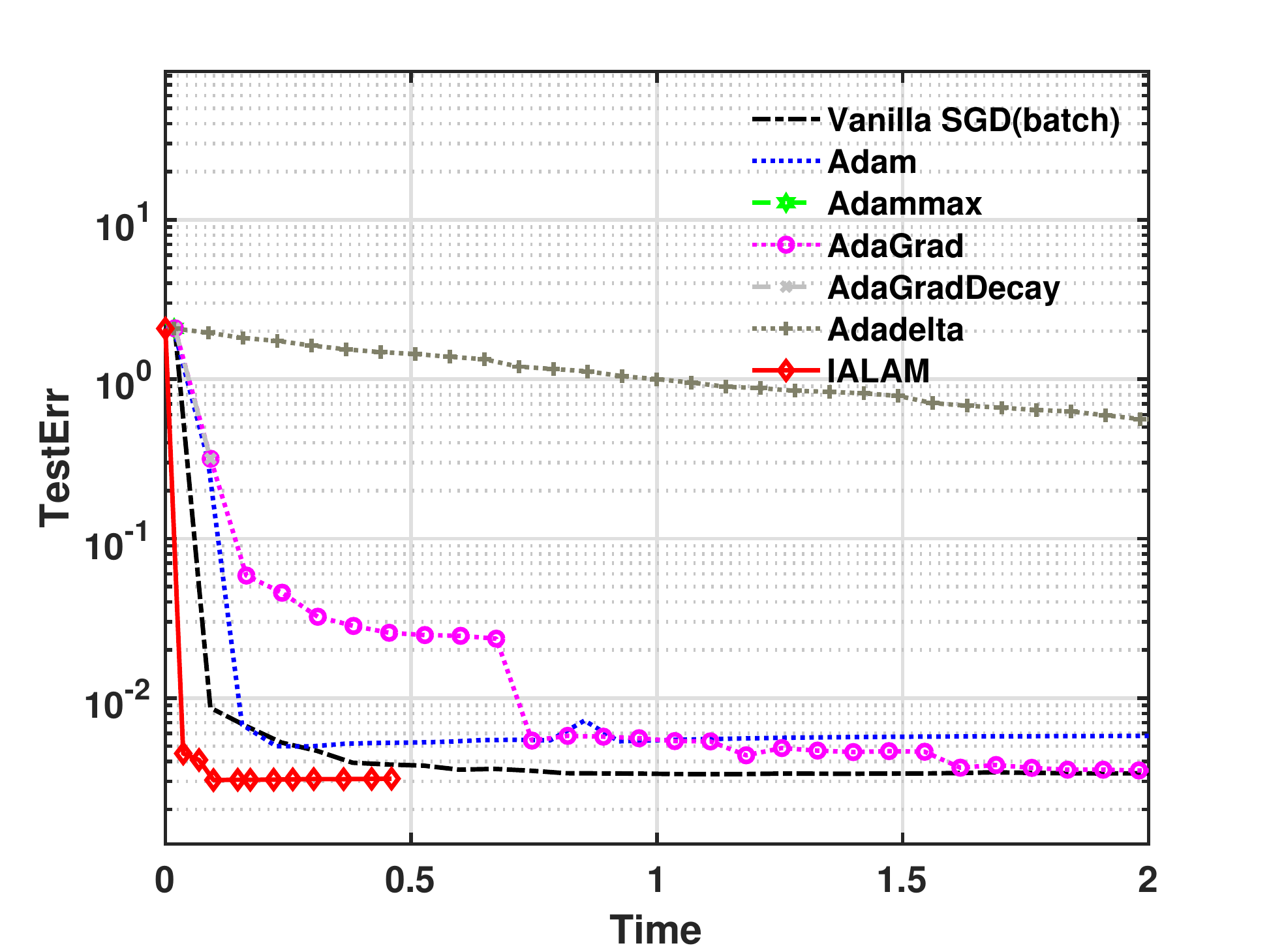}}
	\caption{Comparisons among IALAM and SGD-based approaches on the synthetic data set.}\label{fig:syncompare}
\end{figure}

\subsubsection{Testing on MNIST Data Set}		

Now we {consider} the test on MNIST data set. We {first investigate how} the ``TrainErr" and ``Accuracy"
with the x-axis varying on ``Iteration$\times$batch-size$/N$",
which is equivalent to ``iteration" for IALAM and ``epoch" for the SGD-based approaches. We also {display} the ``Column Sparsity Ratio" with the x-axis varying on ``Tolerance".
{Here,  
	let $(W_1,\ldots,W_{L})$ be the derived weight matrix of solver $s$, we denote $t^s_{\ell, j}= ||(W_{\ell})_{\cdot,j}||_2$ for all $\ell\in[L]$ and $j\in[N_{\ell-1}]$ and for a given tolerance $\omega$, the ``Column Sparsity Ratio" $r^s_{\omega}$ of solver $s$ is defined by
	$$
	r^s_{\omega}:= \sum_{\ell=1}^{L}\sum_{j=1}^{N_{\ell-1}}\delta(t^s_{\ell, j} \leq \omega) \left/\sum_{\ell=0}^{L-1}N_{\ell}\right.,
	$$
	where 
	$\delta(\Gamma)=1$ if the statement ``$\Gamma$" is true, otherwise $\delta(\Gamma)=0$.}


We can conclude from Figures \ref{fig:mnistcompare}--\ref{fig:mnistcompare2} that (i) IALAM can find {sparser} solution than the SGD-based approaches; (ii) IALAM can yield comparable TrainErr and Accuracy with other approaches, if not better; (iii) ProxSGD can find {as
	sparse solutions as those of} IALAM but much worse behavior on ``TrainErr" and ``Accuracy".

\begin{figure}[htbp]	
	\centering
	\setcounter{subfigure}{0}
	\subfloat[TrainErr]{\includegraphics[width=50mm]{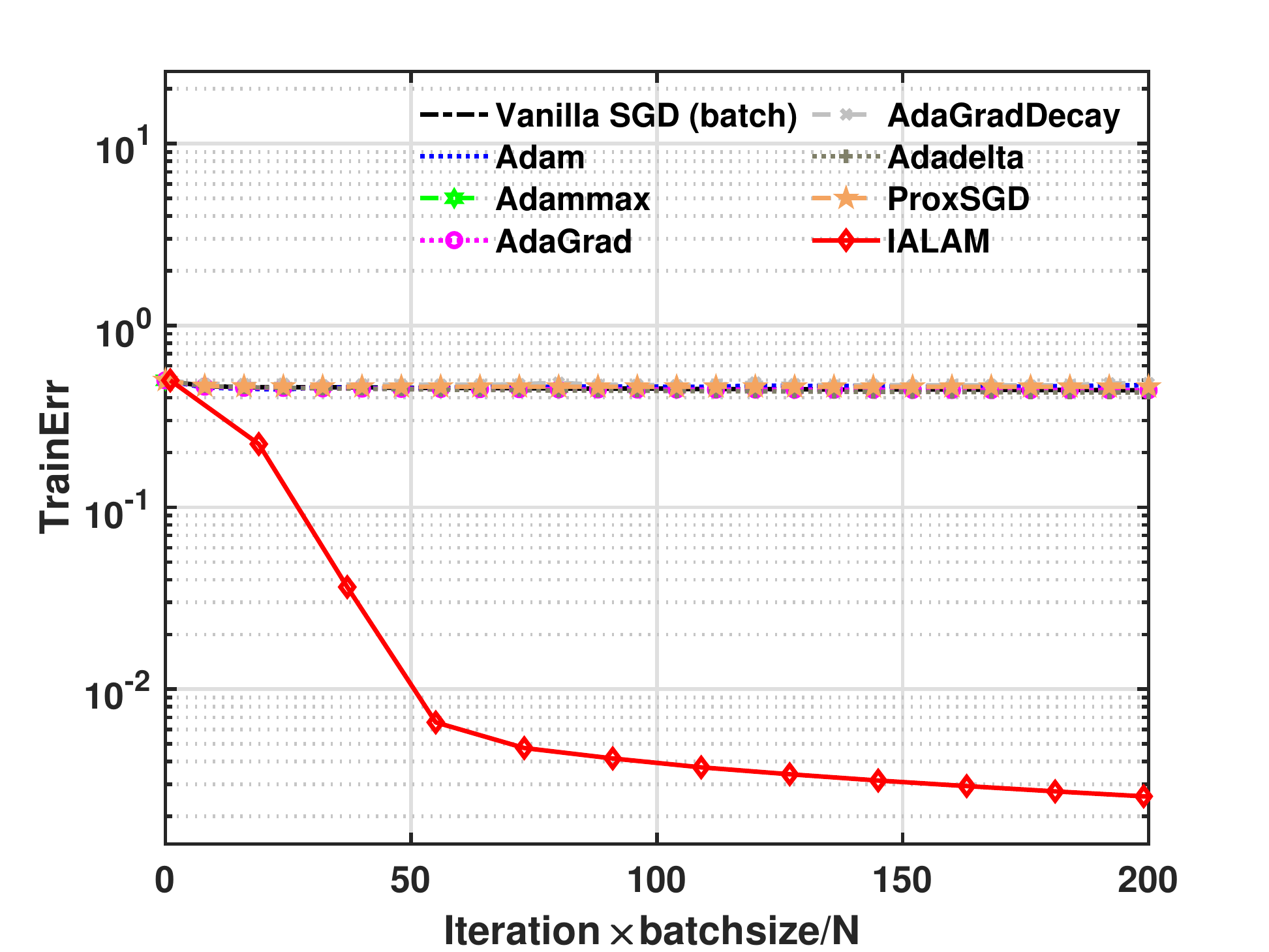}}
	\subfloat[Accuracy]{\includegraphics[width=50mm]{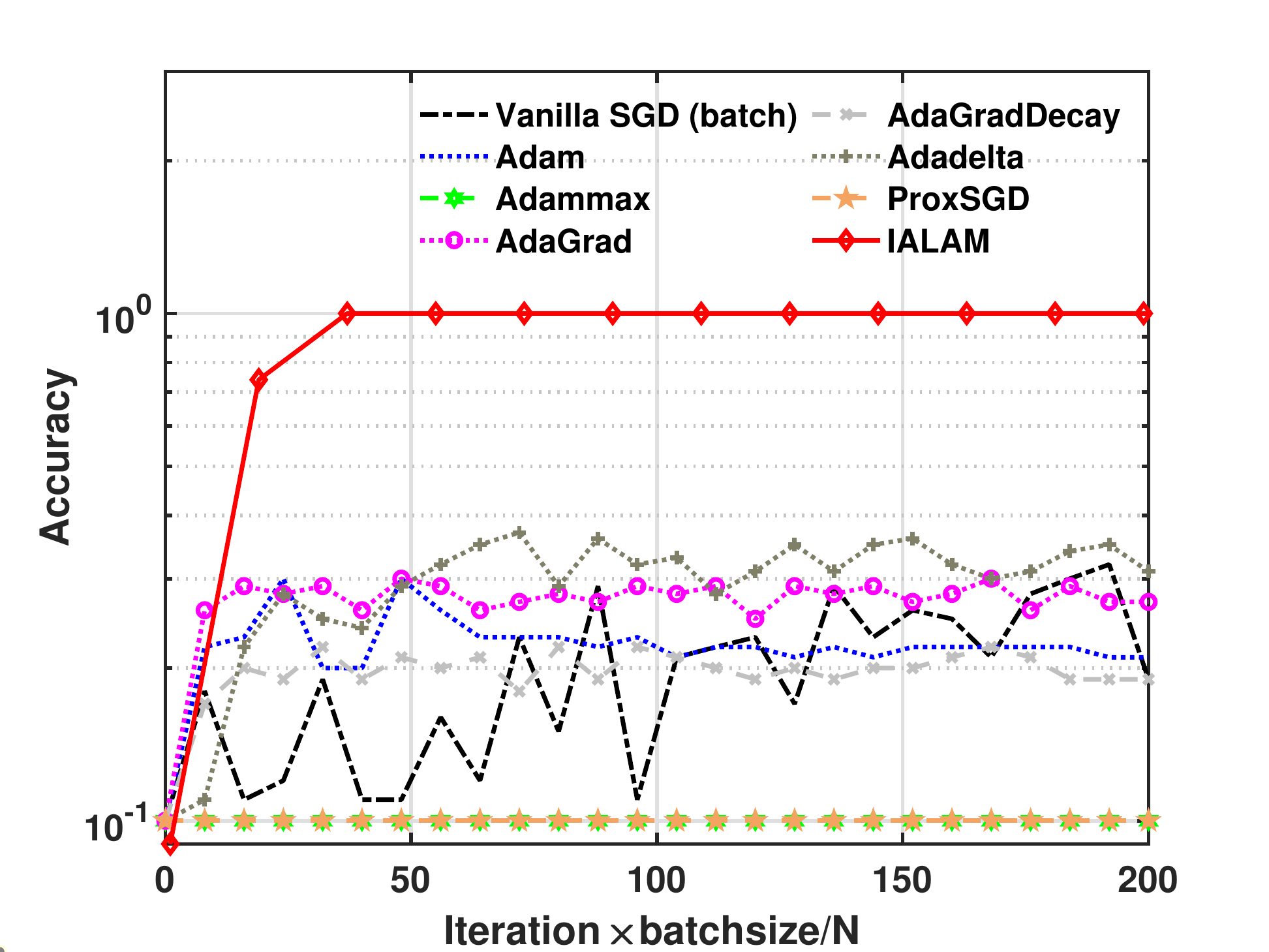}}
	\subfloat[Column Sparse Ratio]{\includegraphics[width=50mm]{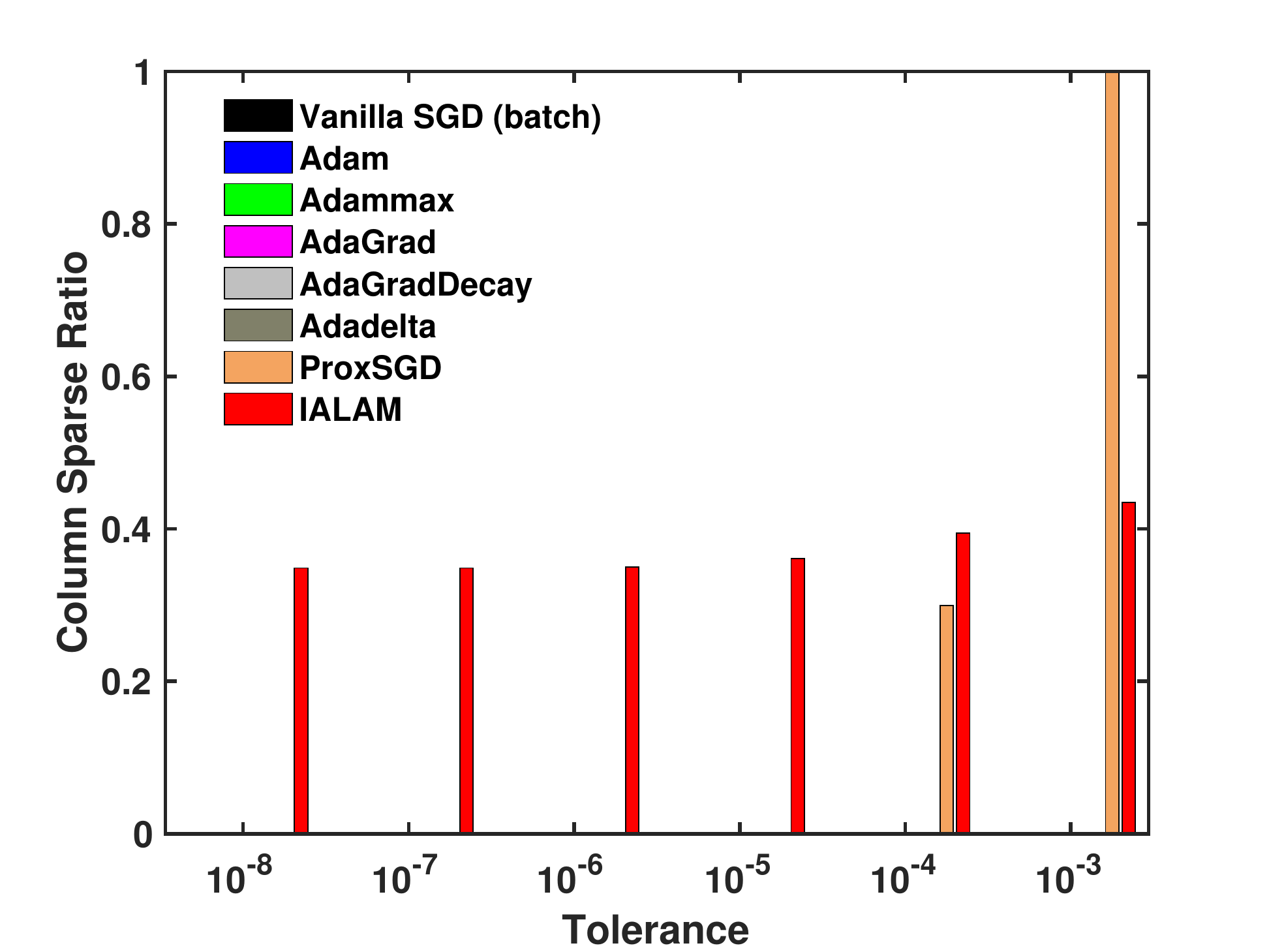}}\\
	\subfloat[TrainErr]{\includegraphics[width=50mm]{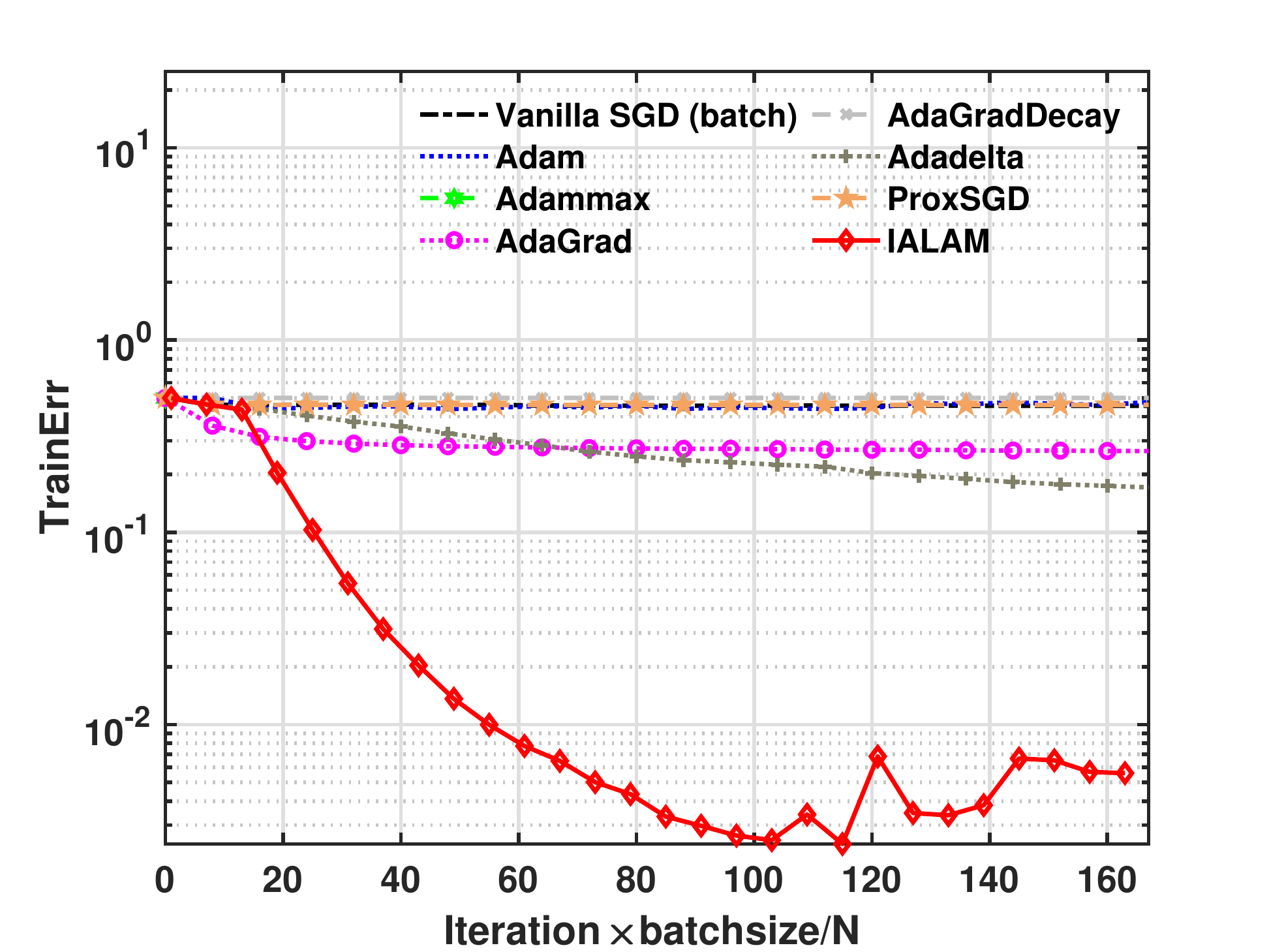}}
	\subfloat[Accuracy]{\includegraphics[width=50mm]{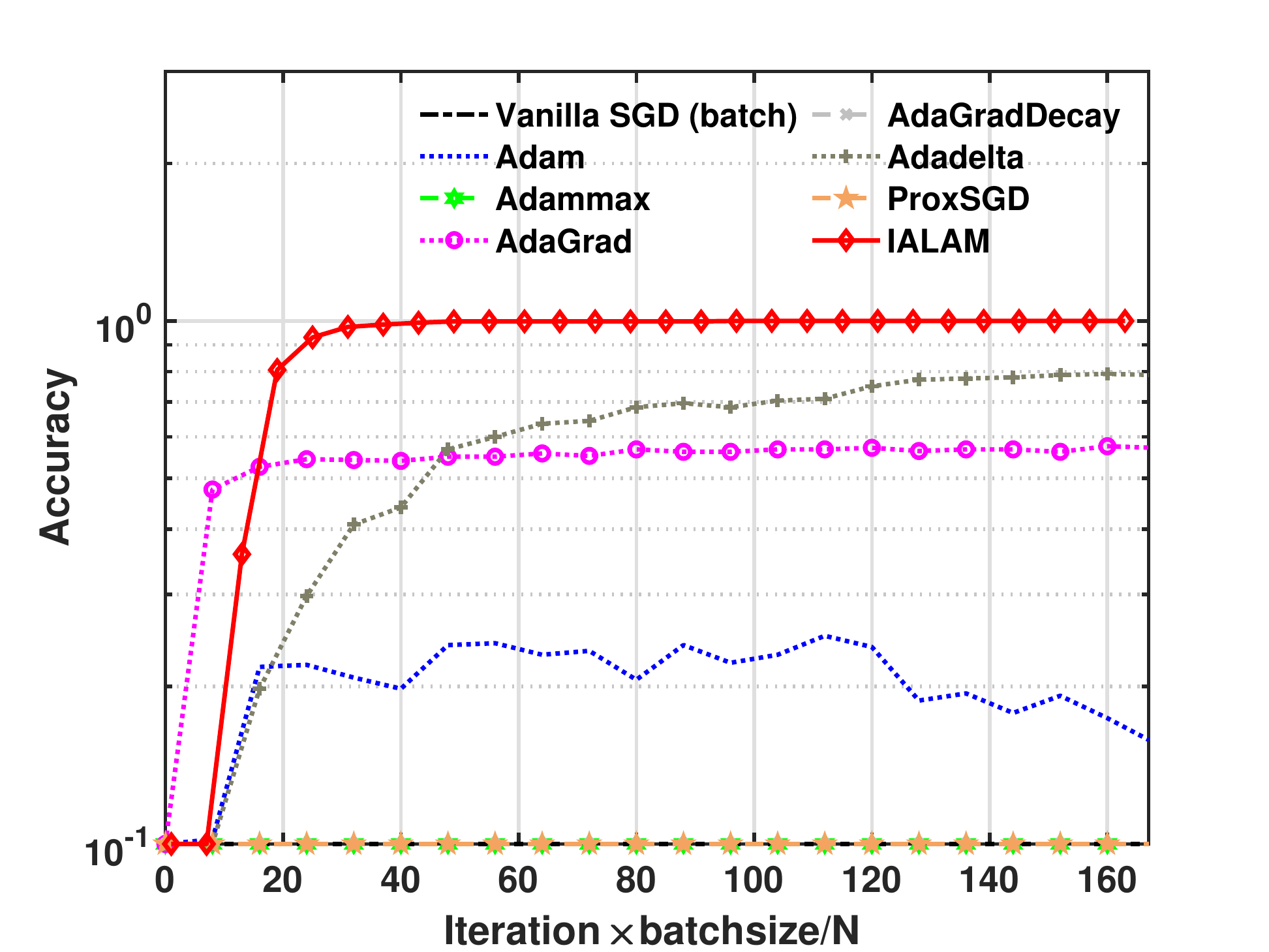}}
	\subfloat[Column Sparse Ratio]{\includegraphics[width=50mm]{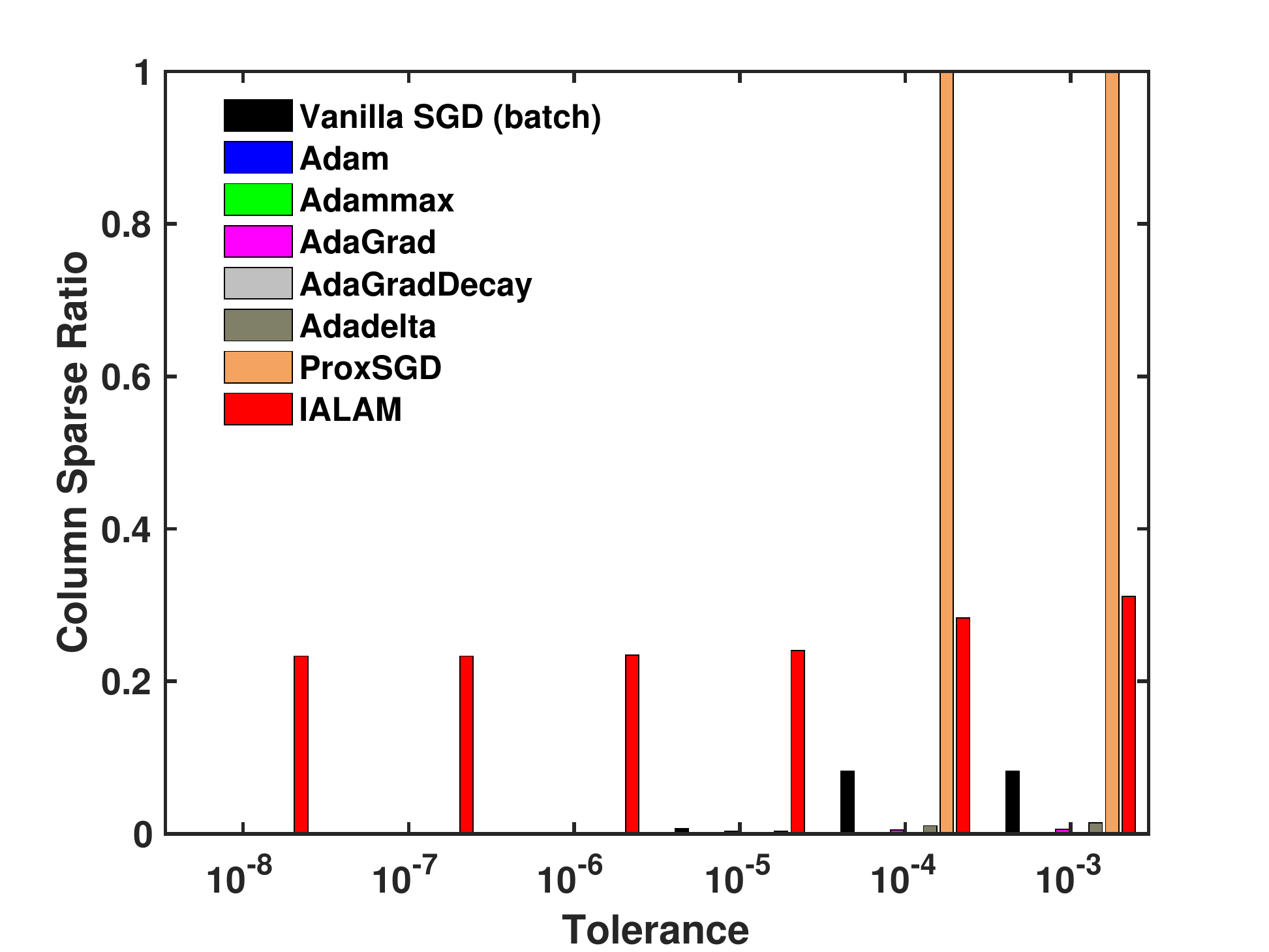}}\\
	\subfloat[TrainErr]{\includegraphics[width=50mm]{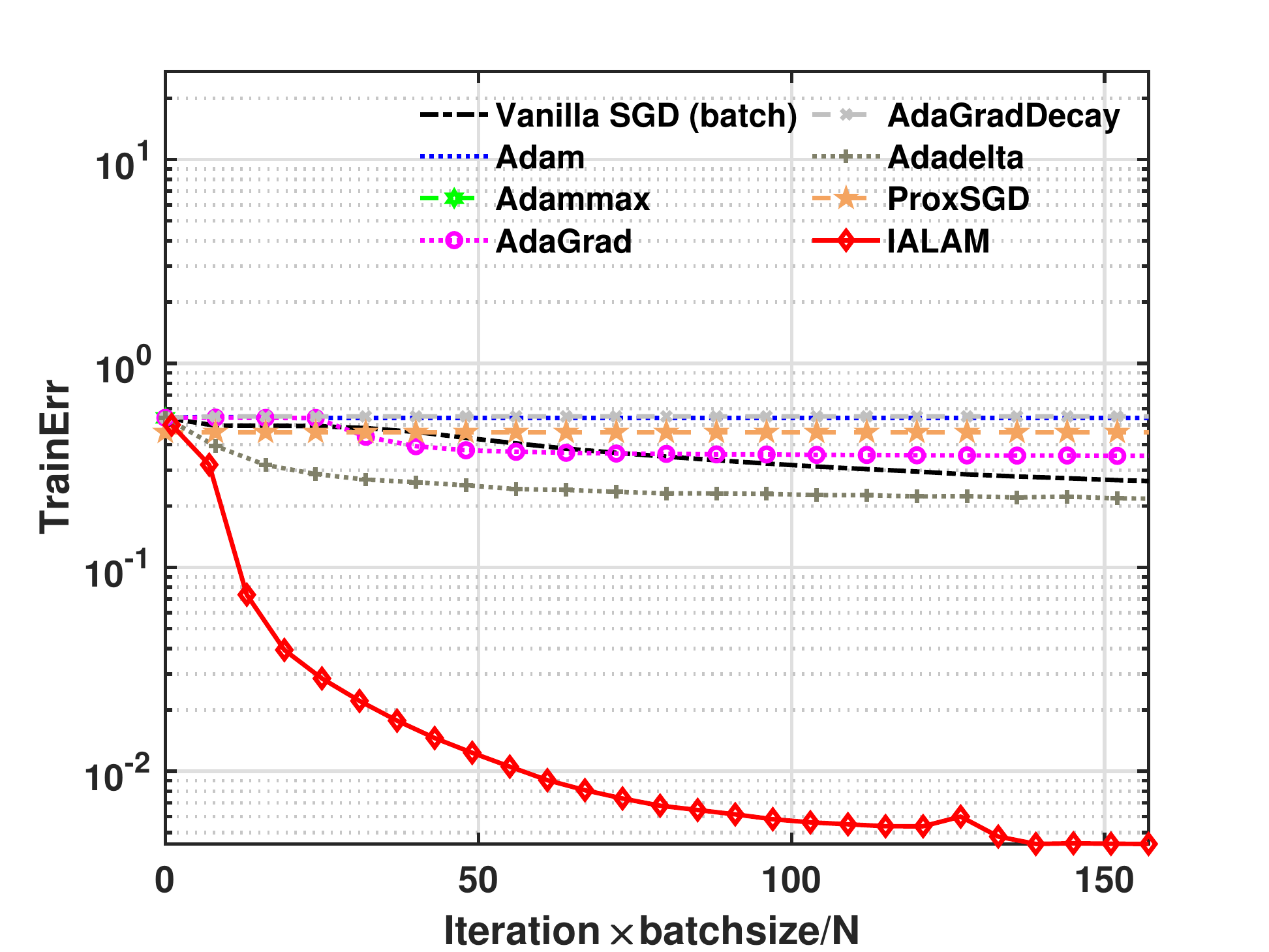}}
	\subfloat[Accuracy]{\includegraphics[width=50mm]{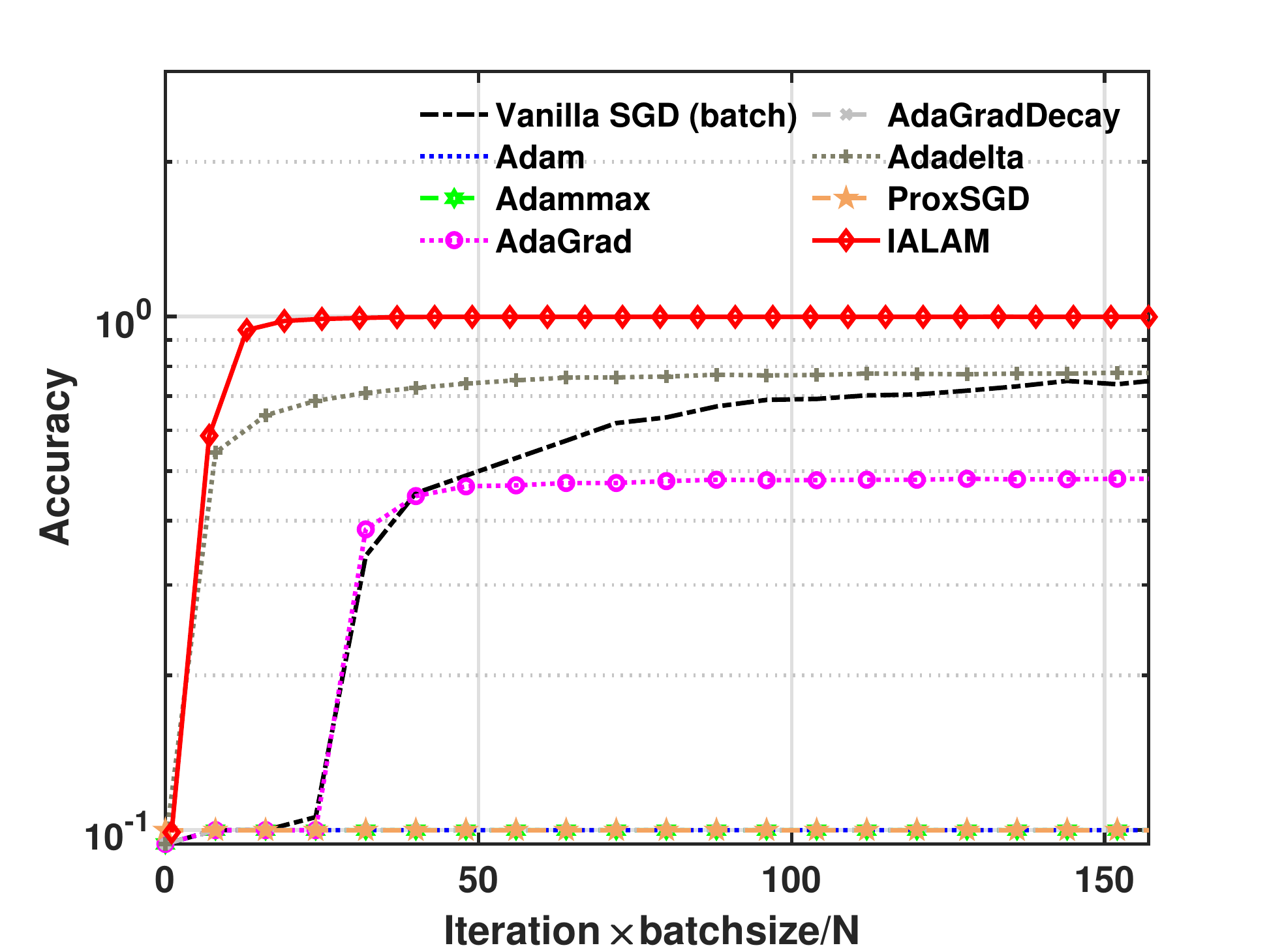}}
	\subfloat[Column Sparse Ratio]{\includegraphics[width=50mm]{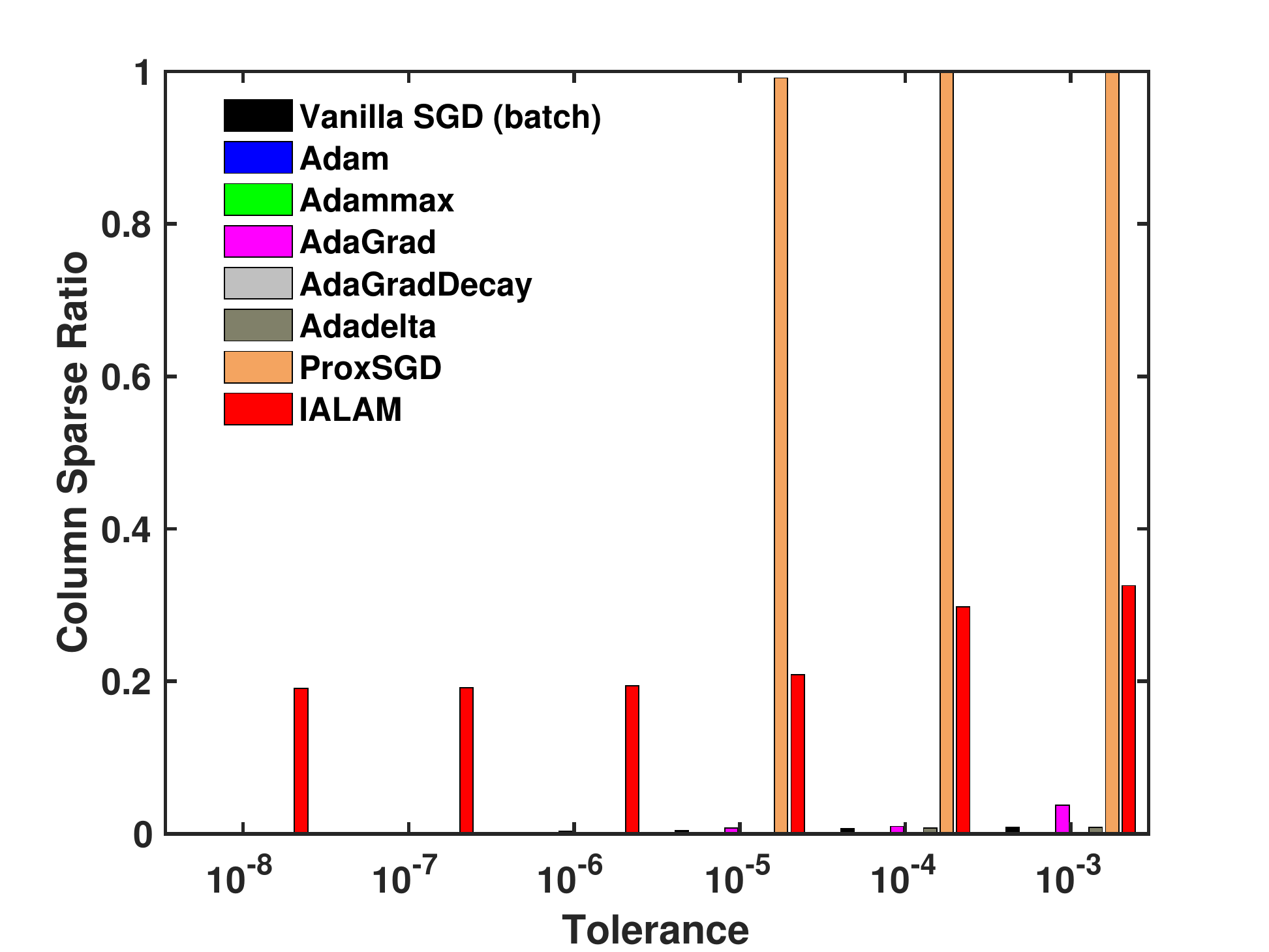}}\\
	\subfloat[TrainErr]{\includegraphics[width=50mm]{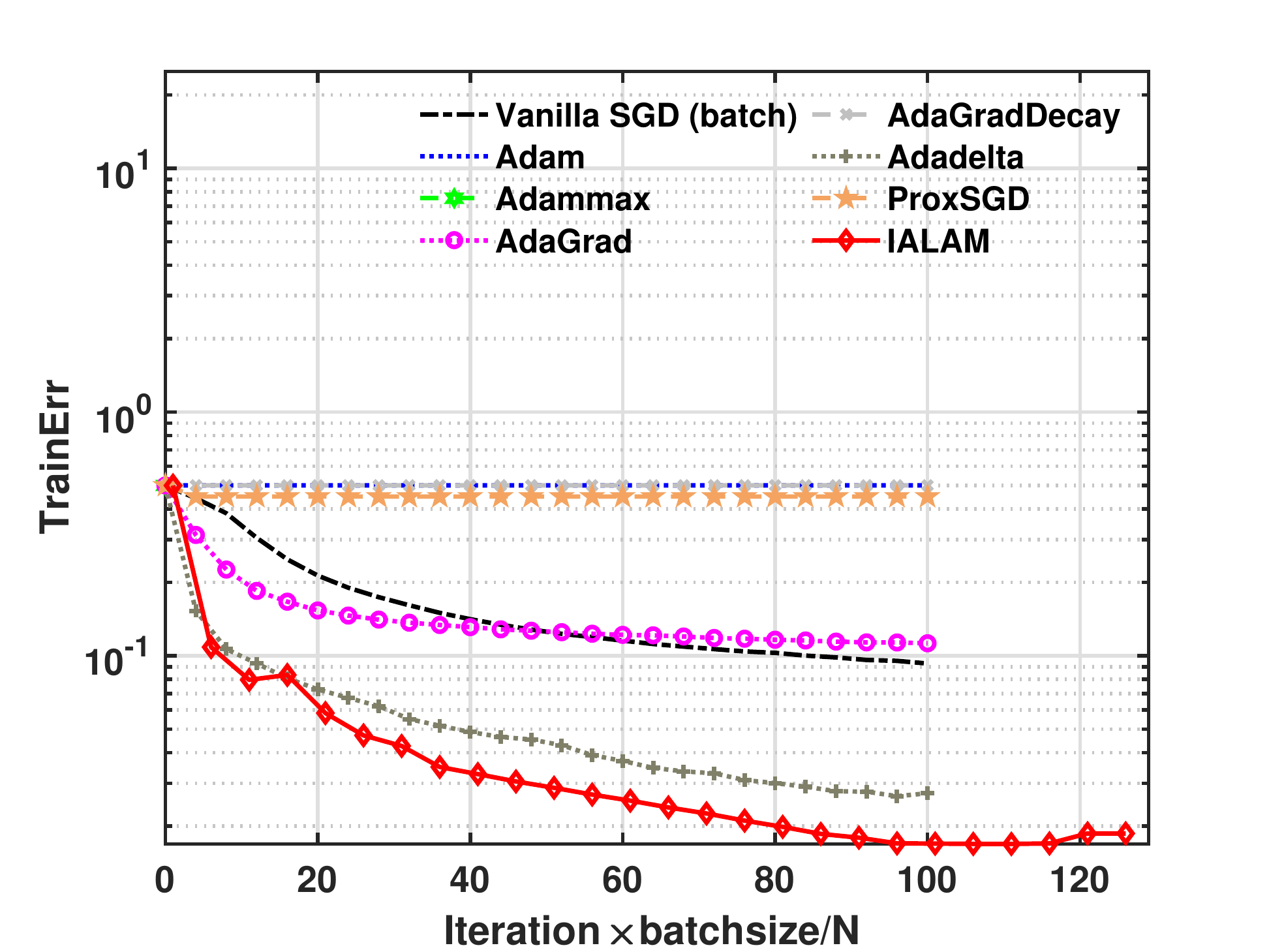}}
	\subfloat[Accuracy]{\includegraphics[width=50mm]{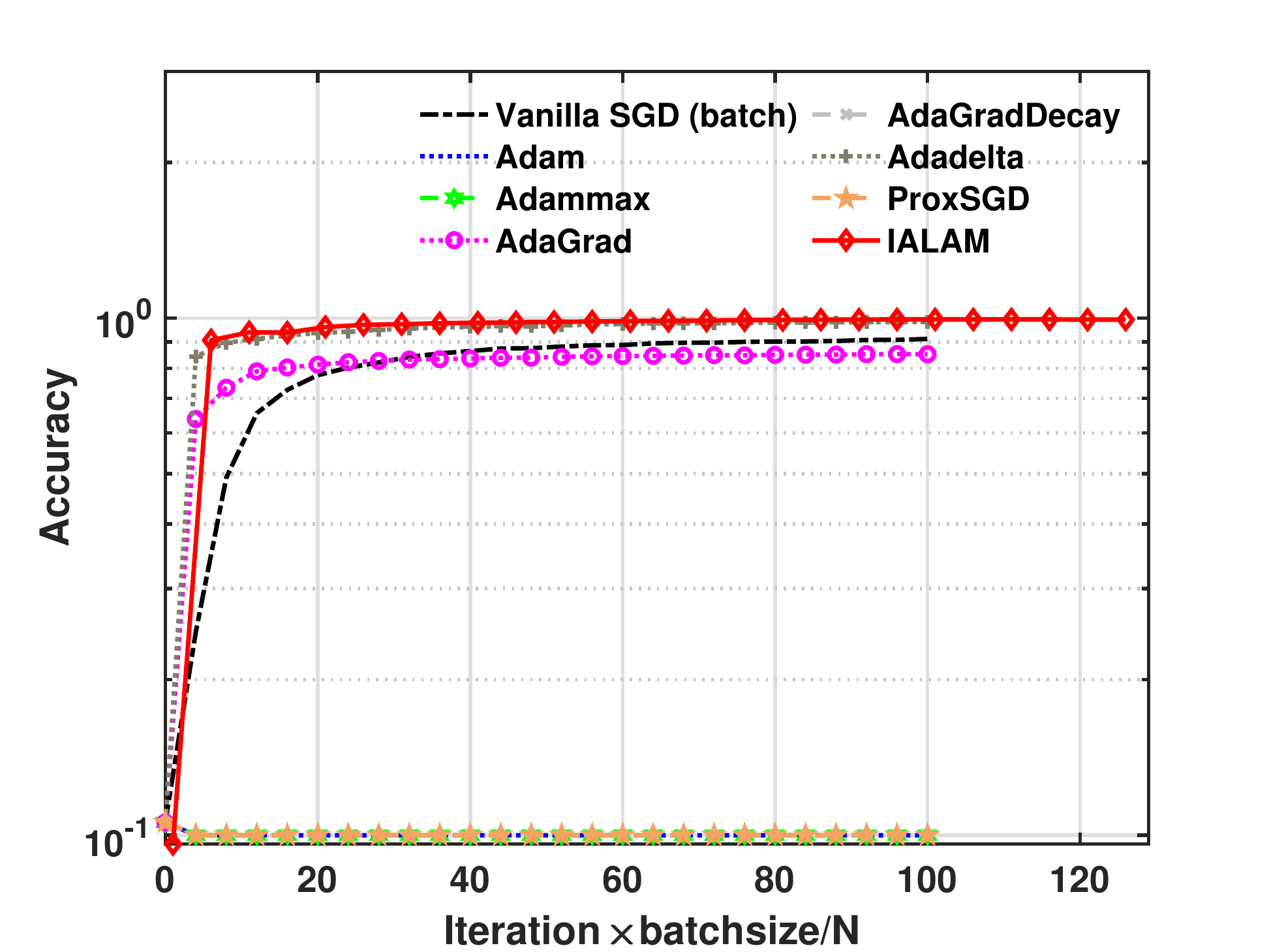}}
	\subfloat[Column Sparse Ratio]{\includegraphics[width=50mm]{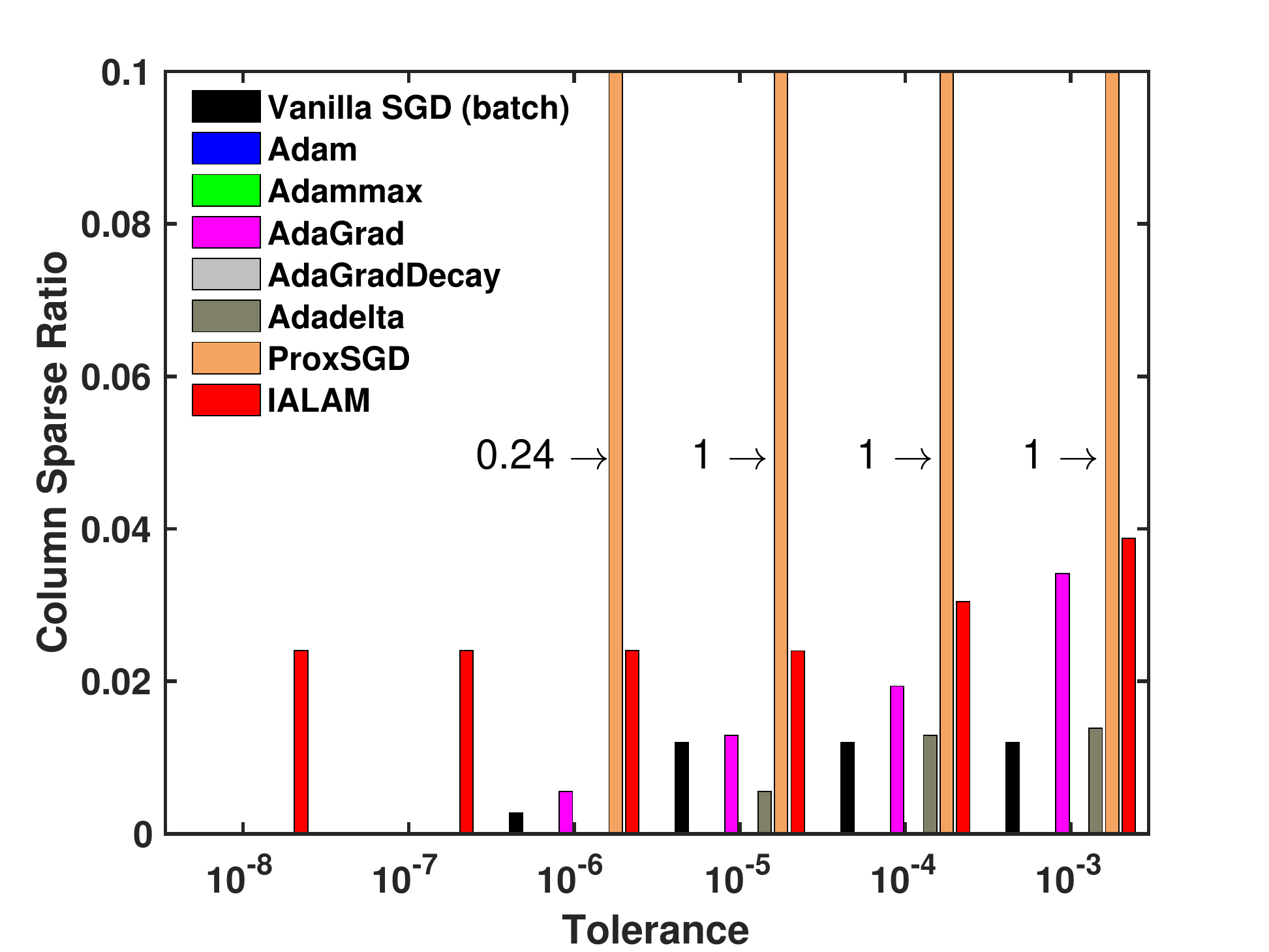}}
	\caption{Comparisons among IALAM and SGD-based approaches on MNIST with (a)--(c): $N=100$, $N_1=5$, $L=2$; (d)--(f): $N=500,N_1=50,N_2=20,L=3$; (g)--(i): $N=1000,N_1=100,N_2=50,L=3$; (j)--(l) $N=5000,N_1=200,N_2=100,L=3$.}
	\label{fig:mnistcompare}
\end{figure}

\begin{figure}[htbp]	
	\centering
	\setcounter{subfigure}{0}
	\subfloat[TrainErr]{\includegraphics[width=50mm]{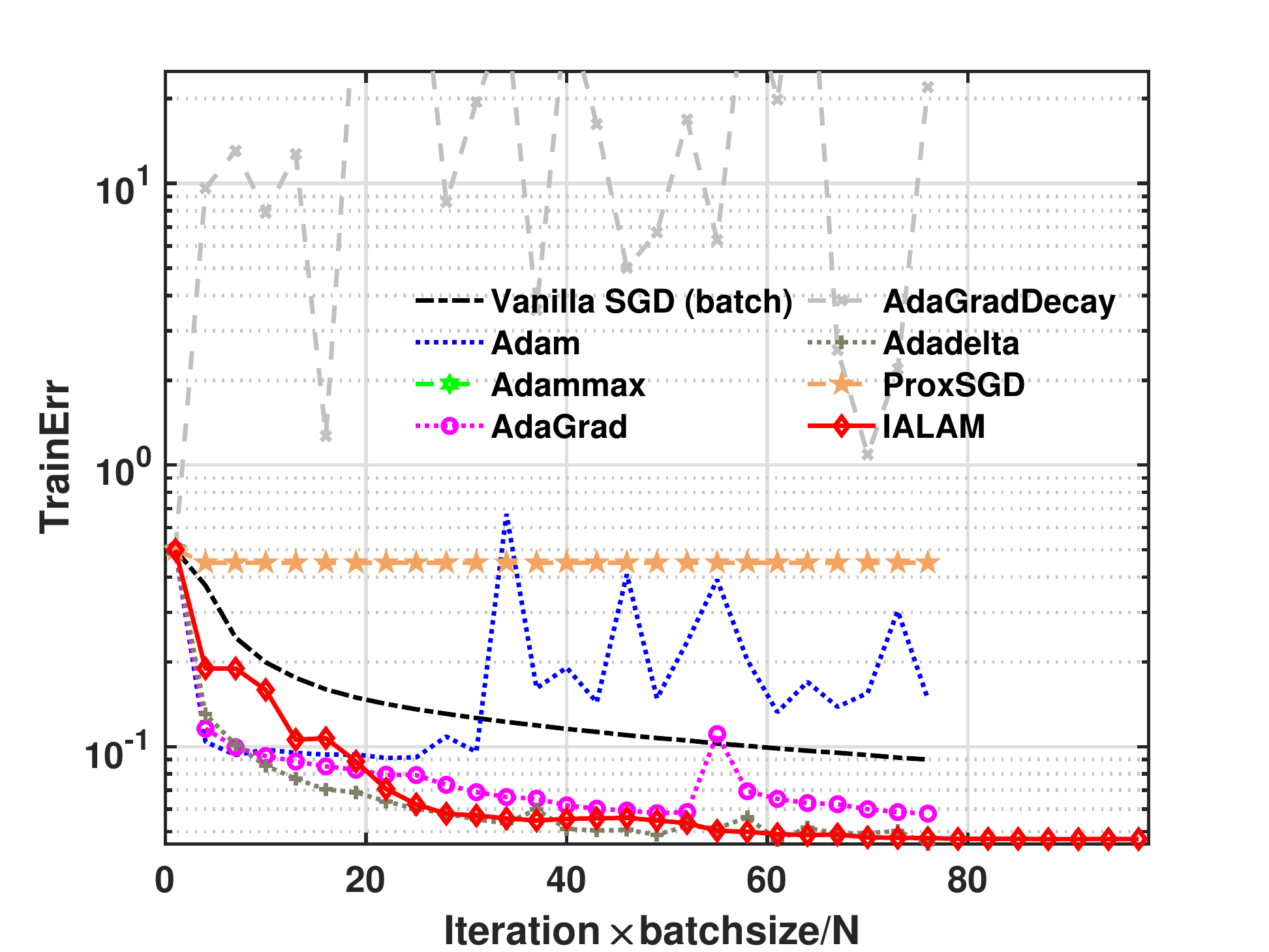}}
	\subfloat[Accuracy]{\includegraphics[width=50mm]{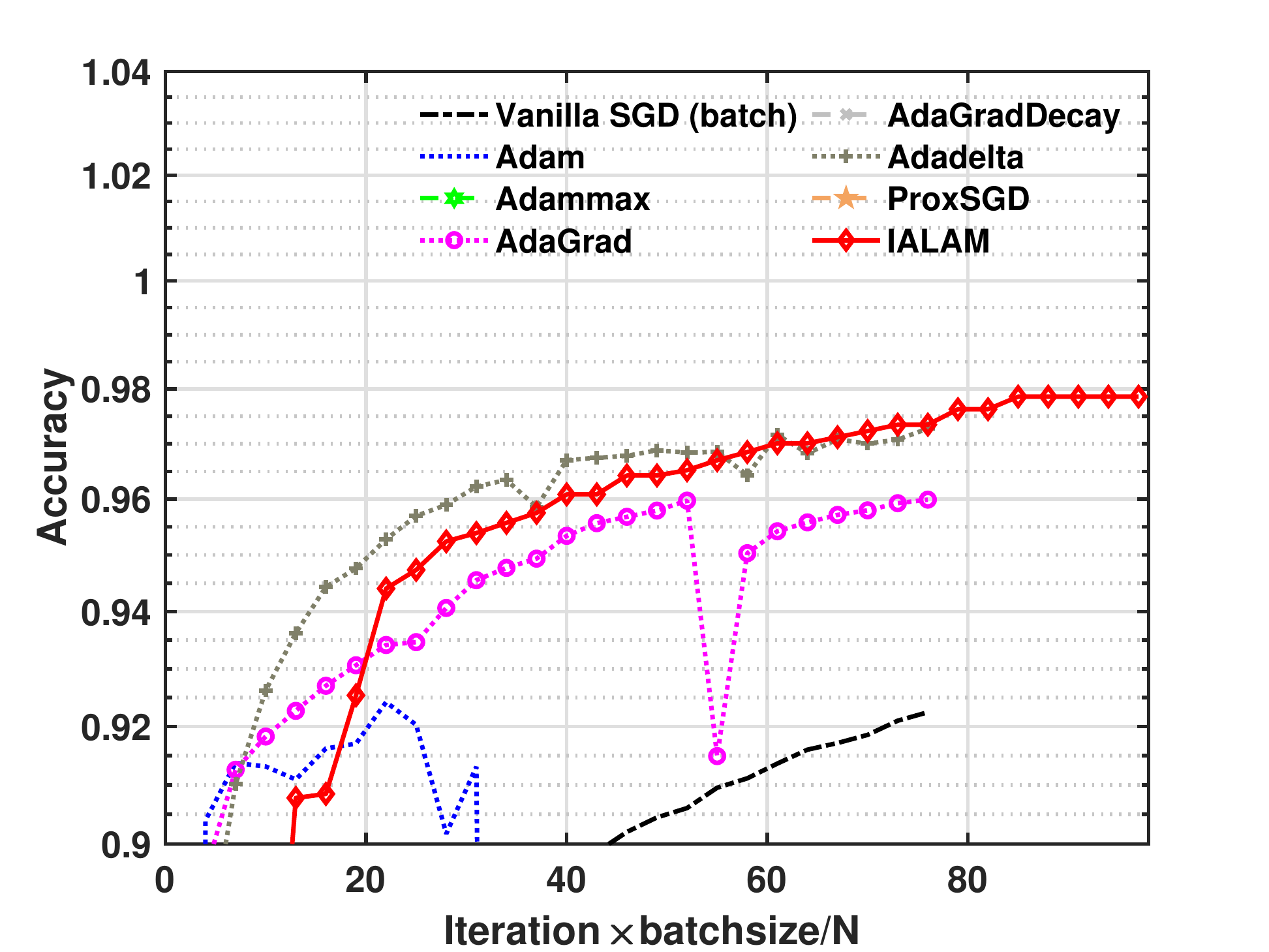}}
	\subfloat[Column Sparse Ratio]{\includegraphics[width=50mm]{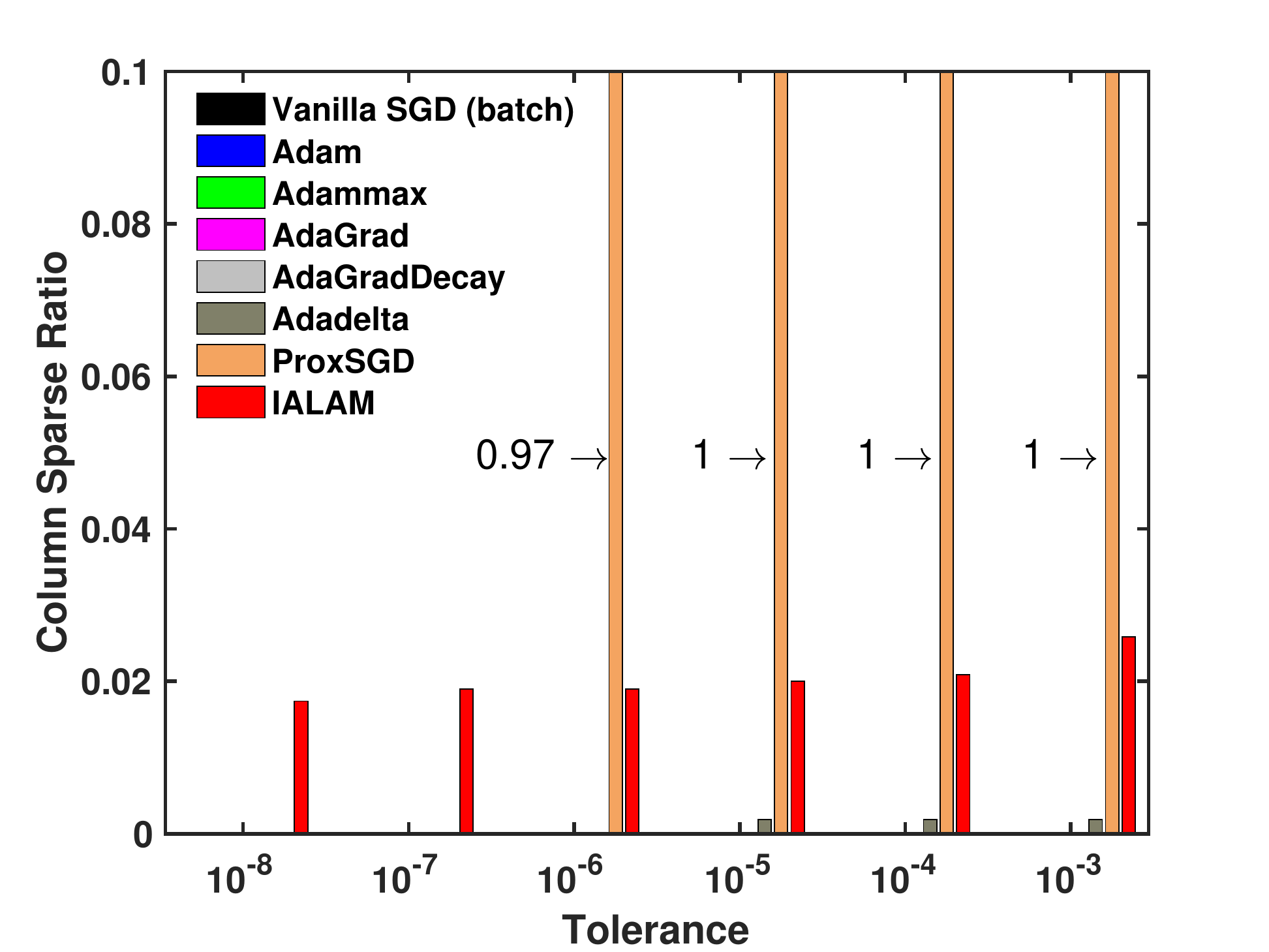}}
	\caption{Comparisons among IALAM and SGD-based approaches on MNIST with $N=60000,N_1=200,N_2=100,L=3$.}
	\label{fig:mnistcompare2}
	
\end{figure}

Finally, we select $720$ test problems based on MNIST data set with
different network parameter combinations $\{(L=2, N_1=20), (L=2, N_1=50), (L=3, N_1=20, N_2=10), (L=3, N_1=50, N_2=20),  (L=4, N_1=10, N_2=10, N_3=10), (L=4, N_1=40, N_2=20, N_3=10)\}$, $\alpha\in\{0,0.01,0.05,0.1\}$, $\lambda_w=i/10N$, $i\in\{1,2,\ldots,10\}$, $N\in\{100,500,2000\}$. We investigate the performance profiles \citep{dolan2002benchmarking} of Vanilla SGD, Adam,
Adamdelta and our IALAM through three measurements ``TrainErr",  {and ``TestErr"}.
We terminate Valinna SGD, Adadelta and Adam whenever the epoch reaches $100$.
We describe how to plot the performance profiles.
For problem $p$ and solver $s$, we use $t_{p}^s$ to represent the output meansurement (``TrainErr" {or ``TestErr"}).  
	Performance ratio is defined as $r_{p}^s:=t_{p}^s / \min _{s}\left\{t_{p}^s\right\}.$ If solver $s$ fails to solve problem $p$, the ratio $r_{p}^s$ is set to 10000. Finally, the overall performance of solver $s$ is defined by
	$$
	\pi_{s}(\omega):=\sum_{p=1}^{720}\delta (r_{p}^s \leq \omega)\bigg/720.
	$$

Clearly, the closer $\pi_{s}$ is to 1, the better performance the solver $s$ has.
The performance profiles with respect to ``TrainErr" and ``TestErr" are given in Figure \ref{fig:perpro}.
We can conclude that IALAM outperforms the others with respect to {both ``TrainErr" and ``TestErr"}. It is worth noting that we use the same batch-size for Valinna SGD, Adadelta, and Adam, and stop them whenever the {the number of epochs reaches $100$.} 

\begin{figure}[htbp]	
	\centering
	\setcounter{subfigure}{0}
	\subfloat[TrainErr]{\includegraphics[width=75mm]{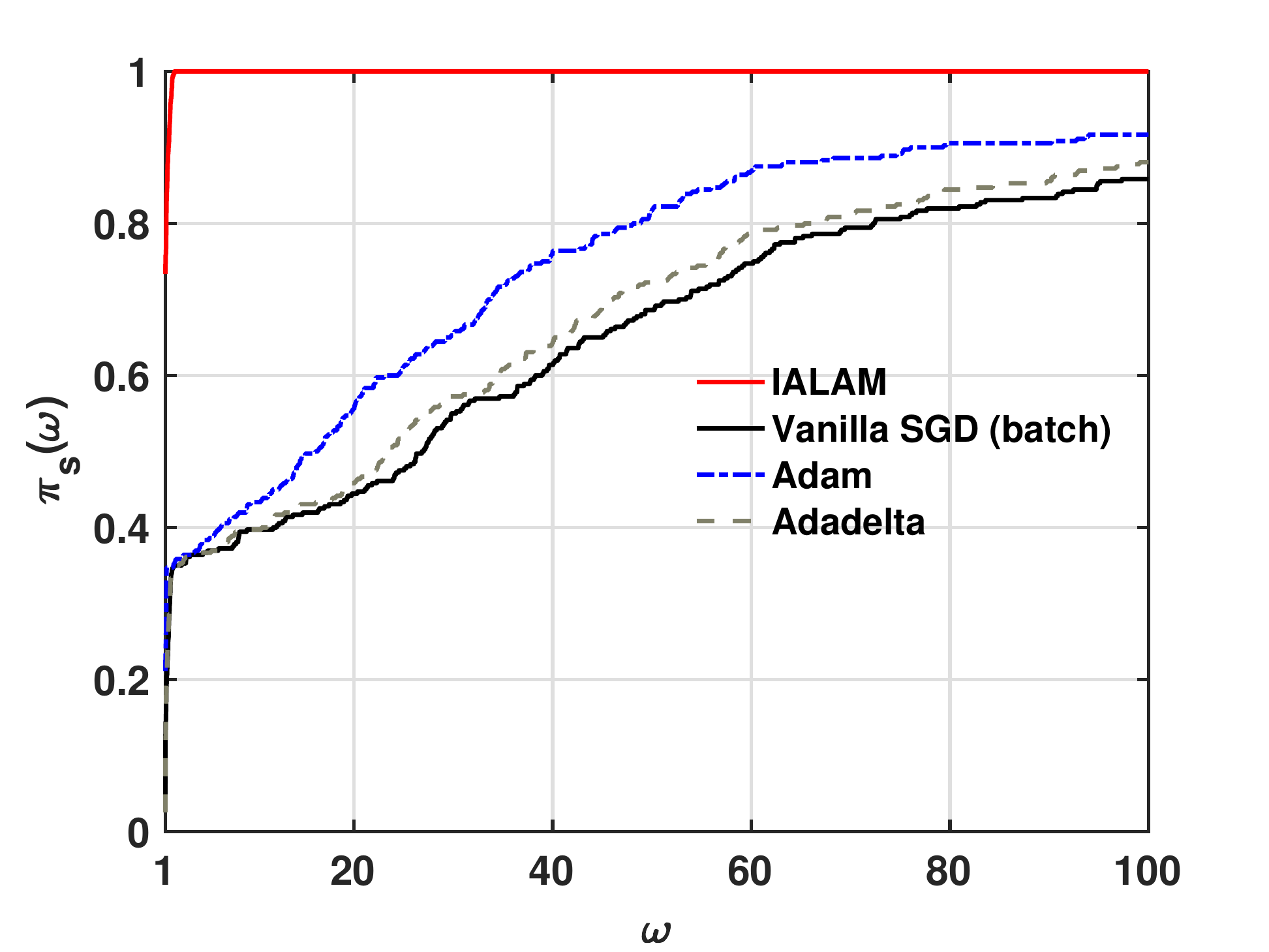}}
	\subfloat[TestErr]{\includegraphics[width=75mm]{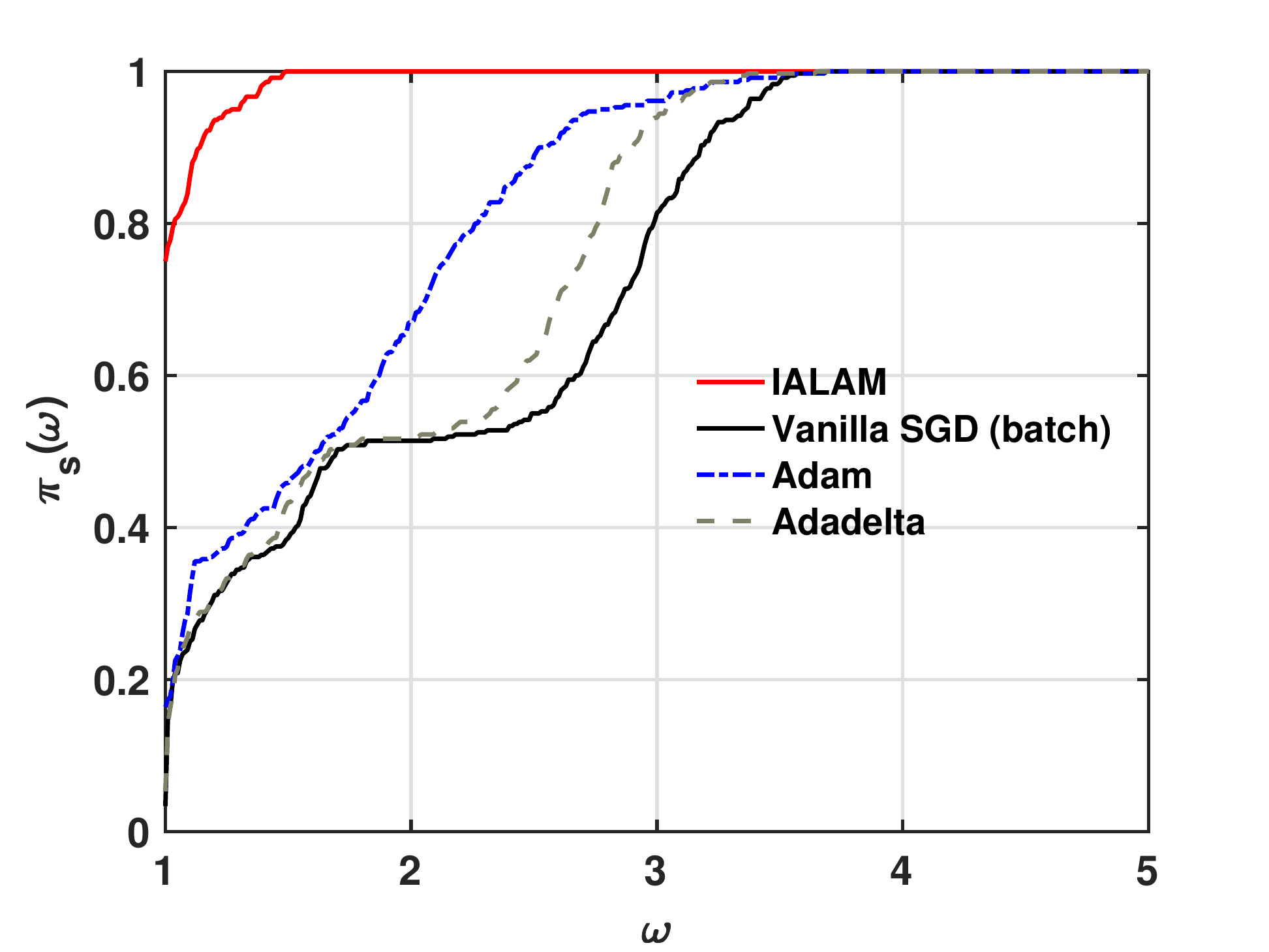}}
	\caption{Performance profile for IALAM, Valinna SGD, Adadelta and Adam on TrainErr and TestErr.}\label{fig:perpro}
\end{figure}

\section{Conclusion}

{We focus on the regularized minimization model (P) for training
	leaky ReLU with group sparsity. We first present an $l_{1}$-norm penalty model (named PP) for problem (P) and then theoretically demonstrate that these two models share the same global minimizers, local minimizers and limiting stationary points under mild conditions. In addition, we prove} that problem (PP) has a nonempty and bounded solution set and its feasible set satisfies the MFCQ, under which the KKT point of (PP) is also an MPCC W-stationary point of problem (P).	
We {propose} an inexact augmented Lagrangian algorithm with the alternating minimization (IALAM) to solve problem (PP).
{The global convergence to the KKT point has been established. Comprehensive numerical experiments have illustrated the efficiency of IALAM as well as its ability to seek sparse solution.} 

\acks{We would like to acknowledge support for this project
	from the National Natural Science Foundation of China (No. 12125108, 11971466, 12288201, 12021001 and 11991021),
	Hong Kong Research Grants Council grant PolyU15300021,
	Key Research Program of Frontier Sciences,
	Chinese Academy of Sciences (No. ZDBS-LY-7022) and the CAS AMSS-PolyU Joint Laboratory in Applied Mathematics. 
}

\vskip 0.2in
\bibliography{dnnref}

\end{document}